\def\L{{\mathcal{L}}}
\def\d{{\rm{d}}}
\def\eps{\varepsilon}
\def\N{{\mathbb N}}
\def\R{{\mathbb R}}
\def\P{{\mathbb P}}
\newtheorem{lemma}{Lemma}[section]
\newtheorem{theorem}[lemma]{Theorem}
\newtheorem{remark}[lemma]{Remark}
\newtheorem{prop}[lemma]{Proposition}
\newtheorem{coro}[lemma]{Corollary}
\newtheorem{definition}[lemma]{Definition}
\newtheorem{example}[lemma]{Example}
\numberwithin{equation}{section}
\begin{document}

\title[The second Bogolyubov theorem and global averaging principle]
{The second Bogolyubov theorem and global averaging principle
for SPDEs with monotone coefficients}

\author{mengyu Cheng}
\address{M. Cheng: School of Mathematical Sciences,
Dalian University of Technology, Dalian 116024, P. R. China}
\email{mengyucheng@mail.dlut.edu.cn; mengyu.cheng@hotmail.com}

\author{zhenxin Liu}
\address{Z. Liu (Corresponding author): School of Mathematical Sciences,
Dalian University of Technology, Dalian 116024, P. R. China}
\email{zxliu@dlut.edu.cn}

\date{June 27, 2022}
\subjclass[2010]{70K65, 60H15, 37B20, 37L15.}
\keywords{Second Bogolyubov theorem; Global averaging principle;
Monotone SPDEs; Periodic solutions; Quasi-periodic solutions; Almost periodic solutions;
Birkhoff recurrent solutions; Poisson stable solutinos;
Stochastic reaction diffusion equations;
Stochastic generalized porous media equations.}

\begin{abstract}
In this paper, we establish the second Bogolyubov theorem
and global averaging principle for stochastic partial
differential equations (in short, SPDEs) with
monotone coefficients. Firstly, we prove that there
exists a unique $L^{2}$-bounded solution to SPDEs with
monotone coefficients and this bounded solution
is globally asymptotically stable in square-mean sense.
Then we show that the $L^{2}$-bounded solution possesses the
same recurrent properties (e.g. periodic, quasi-periodic,
almost periodic, almost automorphic, Birkhoff recurrent,
Levitan almost periodic, etc.) in distribution sense as the
coefficients. Thirdly, we prove that the recurrent solution of
the original equation converges to the stationary solution of
averaged equation under the compact-open topology as the time
scale goes to zero --- in other words, there exists a unique
recurrent solution to the original equation in a neighborhood
of the stationary solution of averaged equation when the time
scale is small. Finally, we establish the global averaging
principle in weak sense, i.e. we show that the
attractor of original system tends to that of the averaged
equation in probability measure space as the time scale
goes to zero. For illustration of our results, we give two
applications, including stochastic reaction diffusion equations
and stochastic generalized porous media equations.
\end{abstract}

\maketitle

\section{Introduction}
Averaging principle is  an effective method for studying
dynamical systems with highly oscillating components.
Under suitable conditions, the highly oscillating components
can be ``averaged out" to produce an averaged system.
The averaged system is easier for analysis and governs the
evolution of the original system over long time scales.

Consider the following deterministic systems in
$\R^{n},n\in\mathbb N$:
\begin{equation}\label{dfode}
\dot{X}^{\varepsilon}=F\left(\frac{t}{\varepsilon},
X^{\varepsilon}\right)
\end{equation}
and
\begin{equation}\label{dfodeave}
\dot{X}=\bar{F}(X)
\end{equation}
for small parameter $0<\varepsilon\ll 1$,
where $F\in C(\R\times\R^{n},\R^{n})$, and
$
\bar{F}(x)=\lim\limits_{T\rightarrow\infty}\frac{1}{T}
\int_{0}^{T}F(t,x)\d t.
$

It is a basic problem of averaging principle to determine in
what sense the behavior of solutions to the averaged
system \eqref{dfodeave} approximates the behavior of solutions
to the non-autonomous system \eqref{dfode} as the time scale
$\varepsilon$ goes to zero. For the connotation of
approximation, there are three natural types of interpretation.
One is the so-called {\em first Bogolyubov theorem},
i.e. the convergence of the solution of the original Cauchy problem
\eqref{dfode} to that of the averaged equation \eqref{dfodeave} on
a finite interval $[0,T]$ when the initial data are such that
$X^{\varepsilon}(0)=X(0)$. And another one is to request that the
approximation be valid on the entire real axis, which is the
so-called {\em second Bogolyubov theorem} (sometimes called ``theorem for
periodic solution by averaging"). In addition, it is meaningful
to determine whether the attractor of the averaged equation
\eqref{dfodeave} approximates the attractor of the original
equation \eqref{dfode}. One calls this result the {\em global
averaging principle}.

The idea of averaging dates back to the perturbation theory
which was proposed by Clairaut, Laplace and Lagrange in the 18th
century. Then fairly rigorous averaging method for nonlinear
oscillations was presented by Krylov, Bogolyubov and
Mitropolsky \cite{KB1943, BM1961}, which is called the
Krylov-Bogolyubov method nowadays. After that, there is a lot
of works on averaging for deterministic finite and infinite
dimensional systems, which we will not mention here.

Meanwhile, Stratonovich firstly proposed the stochastic averaging
method on the basis of physical considerations, which was later
proved mathematically by Khasminskii. Then extensive
investigations concerning averaging
principle for stochastic differential equations were conducted,
following Khasminskii's mathematically pioneering work \cite{Khas1968};
see, e.g. \cite{BK2004, Cerr2009, Cerr2011, CF2009, CL2017, DW2014, FW2006,
FW2012, Gao, Gao2021, Kifer2004,  LRSX2020, MSV1991, RX2021,
Skor1989, SXX2021, Vere1990, Vrko1995, WR2012} and the
references therein. Note that the above existing results are
concerned with the first Bogolyubov theorem.

Despite considerable advances in this direction,
there are  few works on stochastic averaging concerning with
the second Bogolyubov theorem, which states:
there exists a unique periodic solution to the original
equation in a neighborhood of the stationary solution of
averaged equation. Consider the following
SPDEs on a separable Hilbert space
$\left(H,\langle\cdot,\cdot\rangle\right)$
\begin{equation}\label{origeq}
\d X_{\varepsilon}(t)=\left(A(X_{\varepsilon}(t))
+F\left(\frac{t}{\varepsilon},X_{\varepsilon}(t)\right)\right)\d t
+G\left(\frac{t}{\varepsilon},X_{\varepsilon}(t)\right)\d W(t),
\end{equation}
where  $A$ satisfies some monotone
condition, $F$ and $G$ are Lipschtiz in second variable.
Here $W$ is a two-sided cylindrical Wiener process defined on
another separable Hilbert space $U$  and $0<\varepsilon\leq1$.
Recall that the second Bogolyubov theorem for stochastic
differential equations with almost periodic coefficients was
studied in \cite{KMF_2015},
and \cite{CL2020} investigated the averaging principle
for stochastic ordinary differential equations with general recurrent coefficients. As discussed in
\cite{KMF_2015} and \cite{CL2020}, equations are semilinear with
globally Lipschitz and linear growth nonlinear terms,
which cannot cover the monotone case.
However, the coefficients of many interesting
practical models just satisfy monotone conditions.
Some typical examples are reaction diffusion equations
and porous media equations.

Generally, reaction diffusion equations can be used to describe
the growth of biological population and the spatial spread of
epidemic diseases, which are largely affected by time-varying
environment. In particular, the recurrent phenomenon has been found
in the growth of population and the spread of diseases,
since some regular environmental changes such as seasonal changes.
And porous media equations appear in the description of different
natural phenomenon related to diffusion, filtration or heat
propagation. Since the noise models the small irregular fluctuations
generated by microscopic effects, it is more practical to
consider the above systems perturbed by white noise.

From the perspective of theoretical and practical value,
we establish the second Bogolyubov theorem for SPDEs
with monotone coefficients in this paper.
More specifically, consider equation \eqref{origeq}, we assume
that $A$ is strongly monotone. Compared with the assumption that
$A$ is a linear bounded operator in \cite{CL2020}, this condition
admits wider applications. It includes unbounded linear operators
and quasi-linear operators.

Denoting by
$F_{\varepsilon}(t,x):=F(\frac{t}{\varepsilon},x)$ and
$G_{\varepsilon}(t,x):=G(\frac{t}{\varepsilon},x)$, we transform
equation \eqref{origeq} to
\begin{equation}\label{origeq2}
\d X_{\varepsilon}(t)=\left(A(X_{\varepsilon}(t))
+F_{\varepsilon}(t,X_{\varepsilon}(t))\right)\d t
+G_{\varepsilon}(t,X_{\varepsilon}(t))\d W(t).
\end{equation}
In this paper, we firstly show that
there exists a unique $L^{2}$-bounded solution
$X_{\varepsilon}(t),t\in\R$ of \eqref{origeq2} which
shares the same recurrent properties (in particular, periodic,
quasi-periodic, almost periodic, almost automorphic, Birkhoff
recurrent, Levitan almost periodic, almost recurrent,
pseudo-periodic, pseudo-recurrent, Poisson stable)
in distribution sense as the coefficients for each
$0<\varepsilon\leq1$. And we prove that this $L^{2}$-bounded
solution $X_{\varepsilon}(t),t\in\R$ to \eqref{origeq2}
is globally asymptotically stable in square-mean sense.
Without loss of generality, we assume
$\varepsilon=1$ in this part, then the $L^{2}$-bounded solution
is denoted by $X(t),t\in\R$. Note that coefficient $F$ in
\eqref{origeq2} need not to be Lipschitz in this part.
This result is interesting on its own rights. To our knowledge,
there are only a few works on general recurrent solutions to SPDEs in a unified framework, see
\cite{CL_2017, LL2020}. As discussed in \cite{CL_2017}
and \cite{LL2020}, they dealt with recurrent solutions
to semilinear SPDEs with Lipschitz continuous and
globally linear growth nonlinearities according to Shcherbakov's
comparability method by character of recurrence.
B. A. Shcherbakov gave the existence condition of at least one
(or exactly one) solution to deterministic equation with the
same character of recurrence as the coefficient.
This solution is said to be compatible (respectively,
uniformly compatible). Comparing to \cite{CL_2017}
and \cite{LL2020}, we consider SPDEs with monotone
coefficients.

Let $X_{\varepsilon}$ be the recurrent solution to equation
\eqref{origeq2}. Then one of the major aims of this paper
is to prove that
\begin{equation}\label{result1}
\lim_{\varepsilon\rightarrow0}d_{BL}(\mathcal L
      (X_{\varepsilon}),\mathcal L(\bar{X}))=0 \qquad
      {\rm in}~Pr(C(\R,H))
\end{equation}
(see Theorem \ref{averth} and Corollary \ref{averthcoro}),
where $d_{BL}$ is the bounded Lipschitz distance
(also called Fortet-Mourier distance); see Subsection \ref{varapproach} for details.
And $\bar{X}$ is the unique stationary solution
of the following averaged equation
\begin{equation}\label{avereq}
\d X(t)=\left(A(X(t))+\bar{F}(X(t))\right)\d t
+\bar{G}(X(t))\d W(t).
\end{equation}
Here $\bar{F}\in C(H,H),
\bar{G}\in C(H,L_{2}(U,H))$, $\bar{F}$ and $\bar{G}$ satisfy
$$\lim\limits_{T\rightarrow\infty}\frac{1}{T}\int_{t}^{t+T}F(s,x)
\d s=\bar{F}(x),\quad
\lim\limits_{T\rightarrow\infty}\frac{1}{T}\int_{t}^{t+T}
\|G(s,x)-\bar{G}(x)\|_{L_{2}(U,H)}^{2}\d s=0$$
uniformly with respect to $t\in\R$.

This averaging principle is also applicable to
the following system
\begin{equation}\label{smeq}
\d X(t)=\varepsilon\left(A(X(t))+F(t,X(t))\right)\d t
+\sqrt{\varepsilon}G(t,X(t))\d W(t),
\end{equation}
where $0<\varepsilon\leq1$. With the time scaling
$t\mapsto \frac{t}{\varepsilon}$, denote by
$\Phi_{\varepsilon}(t):=X(\frac{t}{\varepsilon})$ and
$W_{\varepsilon}(t):=\sqrt{\varepsilon}W(\frac{t}{\varepsilon})$
for all $t\in\R$, we transform equation \eqref{smeq} to
\begin{equation}\label{smteq1}
\d \Phi_{\varepsilon}(t)=\left(A(\Phi_{\varepsilon}(t))
+F_{\varepsilon}(t,\Phi_{\varepsilon}(t))\right)\d t
+G_{\varepsilon}(t,\Phi_{\varepsilon}(t))\d W_{\varepsilon}(t).
\end{equation}
Then we can consider the following equation
\begin{equation}\label{smteq2}
\d \tilde{X}_{\varepsilon}(t)=\left(A(\tilde{X}_{\varepsilon}(t))
+F_{\varepsilon}(t,\tilde{X}_{\varepsilon}(t))\right)\d t
+G_{\varepsilon}(t,\tilde{X}_{\varepsilon}(t))\d W(t).
\end{equation}
It is obvious that
$\L(\tilde{X}_{\varepsilon}(t))=\L(\Phi_{\varepsilon}(t))$
for any $t\in\R$.

In contrast to the first Bogolyubov averaging principle
for Cauchy problem of stochastic differential equations
on finite intervals, we prove that there exists a
unique recurrent solution in a small neighborhood of
the stationary solution to the averaged equation
when the time scale is small. Note that it is
non-initial value problem, and this recurrent solution is more
general than the classical second Bogolyubov theorem which only
treats the periodic case.

Since the SPDEs we concern in this paper
are not semilinear, the semigroup framework in
\cite{KMF_2015} and \cite{CL2020} is not applicable to our
problem. Therefore, a difficulty that we face is how to
deal with the monotone SPDEs. Firstly, under some suitable
conditions, employing the technique of truncation
which is used in
\cite{Cerr2009, CF2009, CL2017, LRSX2020}, we show that
\[
\lim\limits_{\varepsilon\rightarrow0} E
\sup_{s\leq t\leq s+T}\|X_{\varepsilon}(t,s,\zeta_s^\varepsilon)-\bar{X}(t,s,\zeta_s)\|^{2}=0
\]
for all $s\in\R$ and $T>0$ provided
$\lim\limits_{\varepsilon\rightarrow0} E
\|\zeta^{\varepsilon}_{s}-\zeta_{s}\|^{2}=0$,
where $X_{\varepsilon}(t,s,\zeta_s^\varepsilon)$ is the solution
of \eqref{origeq2} with the initial condition
$X_{\varepsilon}(s,s,\zeta_s^\varepsilon)=\zeta^{\varepsilon}_{s}$
and $\bar{X}(t,s,\zeta_s)$ is the solution of \eqref{avereq}
with the initial condition $\bar{X}(s,s,\zeta_s)=\zeta_{s}$
(see Theorem \ref{avethf}). In fact,
this is the first Bogolyubov theorem, which is new despite that
there have already been many results in this direction mentioned above.

In view of Theorem \ref{avethf}, tightness of family of measures
$\{\mathbb P\circ[X_{\varepsilon}(t)]^{-1}
\}_{\varepsilon\in(0,1]}$ for any $t\in\mathbb R$
plays an important  role in establishing the second
Bogolyubov theorem. Although the tightness of
$\{\mathbb P\circ[X_{\varepsilon}(t)]^{-1}\}_{\varepsilon\in(0,1]}$
on $C([0,T];H)$ was proved by using Ascoli-Arzel\`a theorem
and the Garcia-Rademich-Rumsey theorem in \cite{Cerr2009,
CF2009, CL2017}, it is different from the technique used in our
paper. We find that
\begin{equation}\label{Ineq3}
\sup\limits_{t\in\R} E
\|X_{\varepsilon}(t)\|_{S}^{2}<\infty
\end{equation}
uniformly with respect to $\varepsilon\in(0,1]$.
Therefore, for any $t\in\R$, the tightness of
$\{\mathbb P\circ[X_{\varepsilon}(t)]^{-1}\}_{\varepsilon\in(0,1]}$
is a consequence of the compactness of the inclusion $S\subset H$.

Another major result in present paper is to establish the global
averaging principle in weak sense. Namely, we prove that uniform
attractor of original system tends to uniform attractor of
averaged equation in probability measure space $Pr(H)$.
Global averaging of deterministic systems was conducted, see e.g.
\cite{HV1990, Ily1996, Ily1998, Zel2006} among others.
But to our knowledge, there is no work so far on global averaging
for stochastic equations. Notice that it is still an open problem
whether solutions of general SPDEs can generate random dynamical
systems (in short, RDS). So we consider attractors in the probability
measure space $Pr(H)$ instead of pullback attractors in the framework
of RDS. That is why we call it ``in weak sense".

Let $Pr_{2}(H)$ be a subspace of $Pr(H)$ such that
\[
\int_{H}\|z\|^{2}\mu(\d z)<+\infty
\]
for any $\mu\in Pr_{2}(H)$.
With the transition probability $P_{\mathbb F}
(s,x,t,\d y):=\mathbb P\circ\left(X(t,s,x)\right)^{-1}
(\d y)$ to equation \eqref{origeq2} when $\varepsilon=1$,
we associate a mapping
$P^{*}(t,\mathbb F,\cdot):Pr(H)\rightarrow Pr(H)$
defined by
\[
P^{*}(t,\mathbb F,\mu)(B):=
\int_{H}P_{\mathbb F}(0,x,t,B)\mu(\d x)
\]
for all $\mu\in Pr(H)$, $B\in\mathcal B(H)$ and $\mathbb F:=(F,G)$.
Firstly, we show that $P^{*}$ is a cocycle over
$\left(H(\mathbb F),\R,\sigma\right)$ with fiber $Pr_{2}(H)$,
where $\left(H(\mathbb F),\R,\sigma\right)$ is a shift
dynamical system (see Section 5 for details).
Suppose that $H(\mathbb F)$ is compact.
Then we prove that $P_{\varepsilon}^{*}$ associated with
\eqref{origeq2} has a uniform attractor
$\mathcal{A}^{\varepsilon}$ in $Pr_{2}(H)$
for any $0<\varepsilon\leq1$, and
\[
 \lim_{\varepsilon\rightarrow0}{\rm dist}_{Pr_{2}(H)}\left(
        \mathcal{A}^{\varepsilon},\bar{\mathcal{A}}\right)=0
\]
(see Theorem \ref{gath}), where dist$_{Pr_{2}(H)}$ is the Hausdorff
semi-metric and $\bar{\mathcal{A}}:=\{\L(\bar{X}(0))\}$ is the
attractor of $\bar{P}^{*}$ to the averaged equation \eqref{avereq}.
Note that $H(\mathbb F)$ is compact provided $\mathbb F$
is Birkhoff recurrent.

The remainder of this paper is organized as follows. In the next
section, we recall some definitions and facts concerning
dynamical systems, Poisson stable (or recurrent) functions,
Shcherbakov's comparability method by character of
recurrence and variational approach.
In the third section, we show that there exists a unique
$L^{2}$-bounded solution which possesses the same recurrent
properties in distribution sense as the coefficients
and this bounded solution is globally
asymptotically stable in square-mean sense.
In section 4, we establish the second Bogolyubov theorem
for SPDEs with monotone coefficients.
In section 5, we prove the global averaging principle for
these SPDEs.
In the last section, we illustrate our theoretical results by
stochastic reaction diffusion equations and
stochastic generalized porous media equations.

\section{Preliminaries}

In this section, we introduce some useful preliminaries,
including dynamical systems, poisson stable functions,
Shcherbakov's comparability method by character of
recurrence, varational approach.

\subsection{Shift dynamical systems}

In this subsection, let $(\mathcal X,\rho)$ be a
complete metric space and $(\mathcal X,\mathbb R,\pi)$
be a dynamical system (flow) on $\mathcal X$, i.e. the
mapping $\pi :\mathbb R\times \mathcal X\to \mathcal X$
is continuous, $\pi(0,x)=x$ and
$\pi(t+s,x)=\pi(t,\pi(s,x))$ for any $x\in \mathcal X$ and
$t,s\in\mathbb R$. We write $C(\mathbb R,\mathcal X)$
to mean the space of all continuous functions
$\varphi :\mathbb R \to \mathcal X$ equipped with the distance
\begin{equation*}\label{eqD1}
d(\varphi_{1},\varphi_{2}):=\sum_{k=1}^{\infty}\frac{1}{2^k}
\frac{d_{k}(\varphi_1,\varphi_{2})}{1+d_{k}(\varphi_1,\varphi_{2})},
\end{equation*}
where
$$
d_{k}(\varphi_1,\varphi_{2})
:=\sup\limits_{|t|\le k}\rho(\varphi_1(t),\varphi_{2}(t)),
$$
which generates the compact-open topology on $C(\mathbb R,\mathcal X)$.
The space $(C(\mathbb R,\mathcal X),d)$ is a complete metric space
(see, e.g. \cite{Sel, Sch72, Sch85, sib}).

\begin{remark}\label{remCh}\rm
Let $\{\varphi_{n}\}_{n=1}^{\infty}, \varphi\in C(\mathbb R,\mathcal X)$.
Then the following statements are equivalent.
\begin{enumerate}
  \item $\lim\limits_{n\to \infty}d(\varphi_{n},\varphi)=0$.
  \item $\lim\limits_{n\to\infty}\max\limits_{|t|\le l}
        \rho(\varphi_{n}(t),\varphi(t))=0$
         for any $l>0$.
  \item There exists a sequence $l_n\to +\infty$ such that
        $\lim\limits_{n\to \infty}\max\limits_{|t|\le l_n}
        \rho(\varphi_{n}(t),\varphi(t))=0$.
\end{enumerate}
\end{remark}

Let us now consider two examples of shift dynamical systems
which we will use in this paper.

\begin{example}\label{ex1}\rm
We say $\varphi^{\tau}$ is the {\em $\tau$-translation} of $\varphi$
if $\varphi^{\tau}(t):=\varphi(t+\tau)$ for any $t\in\R$ and
$\varphi\in C(\R,\mathcal X)$. For any
$(\tau,\varphi)\in\R\times C(\R,\mathcal X)$, the mapping
$\sigma:\R\times C(\R,\mathcal X)\rightarrow C(\R,\mathcal X)$ is defined
by $\sigma(\tau,\varphi):=\varphi^{\tau}$. Then the triplet
$\left(C(\R,\mathcal X),\R,\sigma\right)$ is a dynamical system which is
called {\em shift dynamical system} or {\em Bebutov's dynamical system}.
Indeed, it is easy to check that $\sigma(0,\varphi)=\varphi$ and
$\sigma(\tau_{1}+\tau_{2},\varphi)
=\sigma(\tau_{2},\sigma(\tau_{1},\varphi))$ for any
$\varphi\in C(\R,\mathcal X)$ and $\tau_{1},\tau_{2}\in\R$. And it can be
proved that the mapping
$\sigma:\R\times C(\R,\mathcal X)\rightarrow C(\R,\mathcal X)$
is continuous, see, e.g. \cite{Ch2015, Sel, Sch72, sib}.
\end{example}
In what follows, let $(\mathcal Y,\rho_{1})$ be a complete metric space.
We employ $H(\varphi)$ to denote the hull of $\varphi$, which is the set
of all the limits of $\varphi^{\tau_{n}}$ in $C(\R,\mathcal X)$, i.e.
\[
H(\varphi):=\{\psi\in C(\R,\mathcal X):\psi=\lim_{n\rightarrow\infty}
\varphi^{\tau_{n}} ~{\rm for~ some~ sequence~}
\{\tau_{n}\}\subset \R \}.
\]
Notice that the set $H(\varphi)\subset C(\R,\mathcal X)$ is closed and
translation invariant. Consequently, it naturally defines on $H(\varphi)$
a shift dynamical system $\left(H(\varphi),\R,\sigma\right)$. Now we give
the second example, which is similar to Section 2.4 in \cite{CL_2017}.

\begin{example}\label{ex2}\rm
We write $BUC(\R\times\mathcal X,\mathcal Y)$ to mean the space of
all continuous functions $f:\R\times\mathcal X\rightarrow\mathcal Y$
which satisfy the following conditions:
\begin{enumerate}
  \item $f$ is bounded on every bounded subset from $\R\times\mathcal X$;
  \item $f$ is continuous in $t\in\R$ uniformly with respect to $x$
  on each bounded subset $Q\subset\mathcal X$.
\end{enumerate}
We endow $BUC(\R\times\mathcal X,\mathcal Y)$ with the following $d$ metric
\begin{equation}\label{dBUC}
d(f,g):=\sum_{k=1}^{\infty}\frac{1}{2^{k}}\frac{d_{k}(f,g)}{1+d_{k}(f,g)},
\end{equation}
where $d_{k}(f,g):=\sup\limits_{|t|\leq k,x\in Q_{k}}
\rho_{1}(f(t,x),g(t,x))$.
Here $Q_{k}\subset\mathcal X$ is bounded,
$Q_{k}\subset Q_{k+1}$ and $\cup_{k\in\N}Q_{k}=\mathcal X$.
Note that $d$ generates the topology of uniform convergence
on  bounded subsets on $BUC(\R\times\mathcal X,\mathcal Y)$
and $\left(BUC(\R\times\mathcal X,\mathcal Y),d\right)$
is a complete metric space.

Given $f\in BUC(\R\times\mathcal X,\mathcal Y)$ and
$\tau\in\R$. We write $f^{\tau}$ to mean the
$\tau$-translation of $f$ if $f^{\tau}(t,x):=f(t+\tau,x)$
for all $(t,x)\in\R\times\mathcal X$. It is proved that
$BUC(\R\times\mathcal X,\mathcal Y)$ is invariant with
respect to translations. Like in Example \ref{ex1},
we define a mapping
$\sigma:\R\times BUC(\R\times\mathcal X,\mathcal Y)
\rightarrow BUC(\R\times\mathcal X,\mathcal Y)$,
$(\tau,f)\mapsto f^{\tau}$.
Then it can be proved that the triplet
$\left(BUC(\R\times\mathcal X,\mathcal Y),\R,\sigma\right)$
is a dynamical system. See Chapter I in \cite{Ch2015}
for details. Given $f\in BUC(\R\times\mathcal X,\mathcal Y)$,
$H(f)\subset BUC(\R\times\mathcal X,\mathcal Y)$ is closed
and translation invariant. Consequently, it naturally defines
on $H(f)$ a shift dynamical system $\left(H(f),\R,\sigma\right)$.

We employ $BC(\mathcal X,\mathcal Y)$ to denote the space of
all continuous functions $f:\mathcal X\rightarrow\mathcal Y$
which are bounded on every bounded subset of  $\mathcal X$
and equipped with the distance
\[
d(f,g):=\sum_{k=1}^{\infty}\frac{1}{2^{k}}
\frac{d_{k}(f,g)}{1+d_{k}(f,g)},
\]
where $d_{k}(f,g):=\sup\limits_{x\in Q_{k}}\rho_{1}(f(x),g(x))$,
$Q_{k}$ is similar to that in Example \ref{ex2}.
Note that $\left(BC(\mathcal X,\mathcal Y),d\right)$
is a complete metric space. For any
$F\in BUC(\R\times\mathcal X,\mathcal Y)$, the mapping
$\mathcal F:\R\rightarrow BC(\mathcal X,\mathcal Y)$ is
defined by
$\mathcal F(t):=F(t,\cdot):\mathcal X\rightarrow\mathcal Y$.
Clearly, $\mathcal F\in C(\R,BC(\mathcal X,\mathcal Y))$.
\end{example}

\begin{remark}\label{remF1}\rm
It can be proved that the following statements are true.
\begin{enumerate}
\item
The mapping $h: BUC(\mathbb R\times \mathcal X,\mathcal Y)
\rightarrow C(\mathbb R,BC(\mathcal X,\mathcal Y))$
defined by equality $h(F):=\mathcal F$ establishes an isometry between
$ BUC(\mathbb R\times \mathcal X,\mathcal Y)$
and $C(\mathbb R, BC(\mathcal X, \mathcal Y))$.
\item
$h(F^{\tau})=\mathcal F^{\tau}$ for any $\tau\in \mathbb R$ and
$F\in BUC(\mathbb R\times \mathcal X,\mathcal Y)$,
i.e. the shift dynamical systems
$(BUC(\mathbb R\times \mathcal X,\mathcal Y),\mathbb R,\sigma)$ and
$(C(\mathbb R,BC(\mathcal X,\mathcal Y)),\mathbb R,\sigma)$
are (dynamically) homeomorphic.
\end{enumerate}
\end{remark}

\subsection{Poisson stable functions}

Let us recall the types of Poisson stable (or recurrent) functions to be studied in
this paper; For further details and the relations among these types
of functions, see \cite{Sel,Sch72,Sch85,sib}.

\begin{definition} \rm
We say that a function $\varphi\in C(\mathbb R,\mathcal X)$ is
{\em $T$-periodic}, if there exists a constant $T\in\R$ such that
$\varphi(t+T)=\varphi(t)$ for all $t\in\R$. In particular, $\varphi$
is called {\em stationary} provided $\varphi(t)=\varphi(0)$
for all $t\in \mathbb R$.
\end{definition}

\begin{definition} \rm
A function $\varphi\in C(\R,\mathcal X)$ is called
{\em Bohr almost periodic} if the set $\mathcal T(\varphi,\varepsilon)$ of
$\varepsilon$-almost periods of $\varphi$ is relatively dense for each
$\varepsilon>0$, i.e. for each $\varepsilon>0$ there exists a constant
$l=l(\varepsilon)>0$ such that
$\mathcal T(\varphi,\varepsilon)\cap[a,a+l]\not=\emptyset$
for all $a\in\R$, where
\[
\mathcal T(\varphi,\varepsilon):=\left\{\tau\in\R:
\sup_{t\in\R}\rho(\varphi(t+\tau),\varphi(t))<\varepsilon\right\},
\]
and $\tau\in\mathcal T(\varphi,\varepsilon)$ is called
{\em $\varepsilon$-almost period} of $\varphi$.
\end{definition}

\begin{definition} \label{ppf}\rm 
We say that $\varphi \in C(\mathbb R,\mathcal X)$ is
{\em pseudo-periodic in the positive} (respectively, {\em negative})
{\em direction} if for each $\varepsilon >0$ and $l>0$ there exists a
$\varepsilon$-almost period $\tau >l$ (respectively, $\tau <-l$)
of the function $\varphi$. The function $\varphi$ is called
{\em pseudo-periodic} if it is pseudo-periodic in both directions.
\end{definition}

\begin{definition} \rm
A function $\varphi \in C(\mathbb R,\mathcal X)$ is called {\em almost
recurrent (in the sense of Bebutov)} if the set
$\mathfrak T(\varphi,\varepsilon)$ is relatively dense for every
$\varepsilon>0$, where
$\mathfrak T(\varphi,\varepsilon)
:=\{\tau\in\R:d(\varphi^{\tau},\varphi)<\varepsilon\}$.
And $\tau\in\mathfrak T(\varphi,\varepsilon)$ is said to be
{\em $\varepsilon$-shift} for $\varphi$.
\end{definition}

\begin{definition} \rm
\begin{enumerate}
\item We say that a function $\varphi\in C(\mathbb R,\mathcal X)$ is
      {\em Lagrange stable} provided $\{\varphi^{h}:\ h\in \mathbb R\}$
      is a relatively compact subset of $C(\mathbb R,\mathcal X)$.
\item We say that a function $\varphi \in C(\mathbb R,\mathcal X)$ is
      {\em Birkhoff recurrent} if it is almost recurrent
      and Lagrange stable.
\end{enumerate}

\end{definition}

\begin{definition} \rm
We say that $\varphi \in C(\mathbb R,\mathcal X)$ is
{\em Poisson stable in the positive }
(respectively, {\em negative}) {\em direction} if for
every $\varepsilon >0$ and $l>0$ there exists $\tau >l$
(respectively, $\tau <-l$) such that
$d(\varphi^{\tau},\varphi)<\varepsilon$. The function $\varphi$ is
called {\em Poisson stable} if it is Poisson stable in both directions.
\end{definition}

\begin{definition} \rm
We say that $\varphi\in C(\mathbb R,\mathcal X)$ is
{\em Levitan almost periodic}
if there exists an almost periodic function
$\psi \in C(\mathbb R,\mathcal Y)$ such that for any
$\varepsilon >0$ there exists $\delta =\delta (\varepsilon)>0$
such that $\mathcal T(\psi,\delta)
\subseteq \mathfrak T(\varphi,\varepsilon)$.
\end{definition}

\begin{definition}\label{defAA1} \rm
We say that a function $\varphi \in C(\mathbb R,\mathcal X)$ is
{\em almost automorphic} if it is Levitan almost periodic and
Lagrange stable.
\end{definition}

\begin{definition} \rm
We say that $\varphi \in C(\mathbb R,\mathcal X)$ is
{\em quasi-periodic with the spectrum of frequencies
$\nu_1,\nu_2,\ldots,\nu_k$}
if it satisfies the following conditions:
\begin{enumerate}
\item the numbers $\nu_1,\nu_2,\ldots,\nu_k$ are rationally
independent; \item there exists a continuous function $\Phi
:\mathbb R^{k}\to \mathcal X$ such that
$\Phi(t_1+2\pi,t_2+2\pi,\ldots,t_k+2\pi)=\Phi(t_1,t_2,\ldots,t_k)$
for all $(t_1,t_2,\ldots,t_k)\in \mathbb R^{k}$; \item
$\varphi(t)=\Phi(\nu_1 t,\nu_2 t,\ldots,\nu_k t)$ for $t\in
\mathbb R$.
\end{enumerate}
\end{definition}

Let $\varphi \in C(\mathbb R,\mathcal X)$. We employ
$\mathfrak N_{\varphi}$ (respectively, $\mathfrak M_{\varphi}$)
to denote the family of all sequences
$\{t_n\}\subset \mathbb R$ such that $\varphi^{t_n}
\to \varphi$ (respectively, $\{\varphi^{t_n}\}$ converges) in
$C(\mathbb R,\mathcal X)$ as $n\to \infty$.

\begin{definition}\label{defPR}\rm (\cite{Sch68,Sch72,Sch85})
A function $\varphi \in C(\mathbb R,\mathcal X)$ is called  {\em
pseudo-recurrent}
if for any $\varepsilon >0$ and
$l\in\mathbb R$ there exists $L\ge l$ such that for any $\tau_0\in
\mathbb R$ we can find a number $\tau \in [l,L]$ satisfying
$$
\sup\limits_{|t|\le 1/\varepsilon}\rho(\varphi(t+\tau_0
+\tau),\varphi(t+\tau_0))\le \varepsilon.
$$
\end{definition}

\begin{remark}\label{remPR} \rm (\cite{Sch68,Sch72,Sch85,sib})
\begin{enumerate}
\item Every Birkhoff recurrent function is pseudo-recurrent, but
      not vice versa.
\item Suppose that $\varphi \in C(\mathbb R,\mathcal X)$ is
      pseudo-recurrent, then every function $\psi\in H(\varphi)$ is
      pseudo-recurrent.
\item Suppose that $\varphi \in C(\mathbb R,\mathcal X)$  is Lagrange
      stable and  every function $\psi\in H(\varphi)$ is Poisson stable,
      then $\varphi$ is pseudo-recurrent.
\end{enumerate}
\end{remark}

Finally, we remark that a Lagrange stable function is not Poisson
stable in general, but all other types of functions introduced
above are Poisson stable.

\begin{definition}\label{defF1} \rm
\begin{enumerate}
\item We say that a function $\varphi\in C(\R,\mathcal X)$ {\em possesses
  the property A} if the motion $\sigma(\cdot,\varphi)$ through $\varphi$
  with respect to the Bebutov dynamical system
  $(C(\R\times\mathcal X),\R,\sigma)$ possesses the property A.
\item Similarly, we say that $F\in BUC(\R\times\mathcal X,\mathcal Y)$
  {\em possesses the property A in $t\in\R$ uniformly with respect to
  $x$ on each bounded subset $Q\subset\mathcal X$}, if the motion
  $\sigma(\cdot,F):\R\rightarrow BUC(\R\times\mathcal X,\mathcal Y)$
  through $F$ with respect to the Bebutov dynamical system
  $\left(BUC(\R\times\mathcal X,\mathcal Y),\R,\sigma\right)$
  possesses the property A.
\end{enumerate}
Here the property A may be stationary, periodic, Bohr/Levitan almost
periodic, etc.
\end{definition}

\subsection{Shcherbakov's comparability method by character of recurrence}

\begin{definition} \rm
A function $\varphi \in C(\mathbb R,\mathcal X)$ is called
{\em comparable}
(respectively, {\em strongly comparable})
{\em by character of recurrence} with $\psi\in C(\mathbb R,\mathcal Y)$
provided $\mathfrak N_{\psi}\subseteq \mathfrak N_{\varphi}$
(respectively, $\mathfrak M_{\psi}\subseteq \mathfrak M_{\varphi}$).
\end{definition}

\begin{theorem}\label{th1}{\rm(\cite[ChII]{Sch72}, \cite{scher75})}
\begin{enumerate}
\item $\mathfrak M_{\psi}\subseteq \mathfrak M_{\varphi}$ implies
$\mathfrak N_{\psi}\subseteq \mathfrak N_{\varphi}$, and hence
      strong comparability implies comparability.

\item  Assume that $\varphi \in C(\mathbb R,\mathcal X)$ is comparable by
       character of recurrence with $\psi\in C(\mathbb R,\mathcal Y)$.
       If the function $\psi$ is stationary (respectively, $T$-periodic,
       Levitan almost periodic, almost recurrent, Poisson stable),
       then so is $\varphi$.
\item  Assume that $\varphi \in C(\mathbb R,\mathcal X)$ is strongly
       comparable by character of recurrence with
       $\psi\in C(\mathbb R,\mathcal Y)$.
       If the function $\psi$ is quasi-periodic with the spectrum of
       frequencies $\nu_1,\nu_2,\dots,\nu_k$ (respectively, almost
       periodic, almost automorphic, Birkhoff recurrent, Lagrange
       stable), then so is $\varphi$.
\item Assume that $\varphi \in C(\mathbb R,\mathcal X)$ is strongly
      comparable by character of recurrence with
      $\psi\in C(\mathbb R,\mathcal Y)$. And suppose further that
      $\psi$ is Lagrange stable. If $\psi$ is pseudo-periodic
      (respectively, pseudo-recurrent), then so is $\varphi$.
\end{enumerate}
\end{theorem}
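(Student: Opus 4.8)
The plan is to translate each recurrence property of $\psi$ into a statement about the sets $\mathfrak N_\psi,\mathfrak M_\psi$ of returning and convergent translation sequences, and then transport the corresponding inclusion to $\varphi$. For assertion (i), note first the trivial inclusion $\mathfrak N_\psi\subseteq\mathfrak M_\psi$. Hence, given $\{t_n\}\in\mathfrak N_\psi$, the hypothesis $\mathfrak M_\psi\subseteq\mathfrak M_\varphi$ already guarantees that $\{\varphi^{t_n}\}$ converges, say $\varphi^{t_n}\to\bar\varphi$ in $C(\R,\X)$; everything then reduces to identifying $\bar\varphi=\varphi$, after which $\{t_n\}\in\mathfrak N_\varphi$ and the second clause follows at once. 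I expect this identification to be the main obstacle of the whole theorem. The naive attempt to apply the reverse shift $\sigma(-t_n,\cdot)$ to $\varphi^{t_n}\to\bar\varphi$ fails, because $\sigma(-t_n,\cdot)$ is not an isometry for the compact-open metric $d$ and the relevant suprema then run over windows escaping to infinity, so convergence on compacta is destroyed; indeed one can build $\psi$ for which $\psi^{t_n}\to\psi$ while $\psi^{-t_n}$ converges to something other than $\psi$, so reversal fails in general. Instead I would argue by a Bochner-type double-sequence scheme: from $\psi^{t_n}\to\psi$ one obtains, for fixed $m$ and $M\to\infty$, the fixed-shift limits $\psi^{t_M-t_m}\to\psi^{-t_m}$ and $\varphi^{t_M-t_m}\to\bar\varphi^{-t_m}$, and a careful diagonal extraction, carried out in \cite[ChII]{Sch72} and \cite{scher75}, then forces the competing descriptions of $\varphi$ to coincide on each compact interval.

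Once (i) is available, (ii) follows by encoding the listed properties as membership data in $\mathfrak N_\psi$ and applying $\mathfrak N_\psi\subseteq\mathfrak N_\varphi$, together with the elementary bridge lemma that $\mathfrak N_\psi\subseteq\mathfrak N_\varphi$ implies: for every $\varepsilon>0$ there is $\delta>0$ with $\mathfrak T(\psi,\delta)\subseteq\mathfrak T(\varphi,\varepsilon)$ (otherwise one extracts $\tau_k$ with $\psi^{\tau_k}\to\psi$ but $d(\varphi^{\tau_k},\varphi)\ge\varepsilon_0$, contradicting the inclusion). Concretely: if $\psi$ is stationary then every sequence lies in $\mathfrak N_\psi\subseteq\mathfrak N_\varphi$ and constant sequences give $\varphi^{\tau}=\varphi$; if $\psi$ is $T$-periodic then $t_n\equiv T$ lies in $\mathfrak N_\psi\subseteq\mathfrak N_\varphi$, i.e. $\varphi^{T}=\varphi$; if $\psi$ is Poisson stable then $\mathfrak N_\psi$ contains sequences tending to $\pm\infty$, hence so does $\mathfrak N_\varphi$. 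For almost recurrence the bridge lemma transports relative denseness of $\mathfrak T(\psi,\delta)$ to that of $\mathfrak T(\varphi,\varepsilon)$; for Levitan almost periodicity one chains $\mathcal T(\eta,\delta_2)\subseteq\mathfrak T(\psi,\delta_1)\subseteq\mathfrak T(\varphi,\varepsilon)$ with the same almost periodic $\eta$ furnished by the Levitan property of $\psi$.

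For (iii) the hypothesis $\mathfrak M_\psi\subseteq\mathfrak M_\varphi$ gives, via (i), also $\mathfrak N_\psi\subseteq\mathfrak N_\varphi$. Lagrange stability of $\varphi$ follows from its characterization as relative compactness of the hull: every sequence admits a subsequence in $\mathfrak M_\psi\subseteq\mathfrak M_\varphi$, so the translates of $\varphi$ are relatively compact. Bohr almost periodicity and quasi-periodicity are handled through Bochner's criterion: subsequential convergence of $\psi$-translates transfers to $\varphi$-translates through the $\mathfrak M$-inclusion, and the uniform-continuity structure upgrades compact-open convergence to the uniform convergence required, the rational independence of the frequencies being preserved. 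Almost automorphy then follows by combining Levitan almost periodicity (from (ii), available since (i) yields the $\mathfrak N$-inclusion) with Lagrange stability, and Birkhoff recurrence by combining almost recurrence (from (ii)) with Lagrange stability.

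Finally, in (iv) the extra assumption that $\psi$ is Lagrange stable is essential, since pseudo-periodicity and pseudo-recurrence are not captured by the $\mathfrak N/\mathfrak M$-inclusions alone in the absence of compactness. By (iii), $\varphi$ is then Lagrange stable too, so $H(\psi)$ and $H(\varphi)$ are compact and compact-open convergence there upgrades to uniform convergence. For pseudo-recurrence I would invoke Remark \ref{remPR}(iii): it suffices to show that every $\eta\in H(\varphi)$ is Poisson stable; since $\psi$ is pseudo-recurrent, every element of $H(\psi)$ is Poisson stable by Remark \ref{remPR}(ii) together with the closing remark of this subsection, and this Poisson stability is transported across the hull using that comparability lifts to the hulls. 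For pseudo-periodicity (Definition \ref{ppf}) I would transport the existence of arbitrarily large $\varepsilon$-almost periods in each direction via the bridge $\mathfrak T(\psi,\delta)\subseteq\mathfrak T(\varphi,\varepsilon)$, the compactness of the hull allowing the passage between the compact-open shifts $\mathfrak T$ and the uniform almost periods $\mathcal T$ occurring in the definition.
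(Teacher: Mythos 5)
First, a contextual point: the paper itself contains no proof of this theorem --- it is imported verbatim from Shcherbakov (\cite[ChII]{Sch72}, \cite{scher75}) --- so your proposal has to stand entirely on its own, and as written it has genuine gaps. The most serious is in part (i), which you yourself call ``the main obstacle of the whole theorem'' and then do not resolve: after obtaining $\varphi^{t_n}\to\bar\varphi$ you sketch a double-sequence scheme and defer the identification $\bar\varphi=\varphi$ to ``a careful diagonal extraction, carried out in \cite{Sch72} and \cite{scher75}''. A blind proof cannot outsource its central step to the references. Moreover, the step has a short self-contained solution that your scheme misses: interleave $\{t_n\}$ with the zero sequence, i.e.\ set $r_{2n}:=t_n$, $r_{2n+1}:=0$. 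Since $\psi^{t_n}\to\psi$ and $\psi^{0}=\psi$, the whole sequence $\psi^{r_n}$ converges to $\psi$, so $\{r_n\}\in\mathfrak N_{\psi}\subseteq\mathfrak M_{\psi}\subseteq\mathfrak M_{\varphi}$; hence $\varphi^{r_n}$ converges in $C(\R,\mathcal X)$, and since its odd-indexed terms are constantly $\varphi$, the limit must be $\varphi$; the even-indexed terms then give $\varphi^{t_n}\to\varphi$, i.e.\ $\{t_n\}\in\mathfrak N_{\varphi}$. No diagonal argument, and no discussion of the failure of time reversal, is needed.

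Second, in part (iii) the passage from compact-open to uniform convergence is asserted rather than proved, and it is precisely the nontrivial content of the Bohr almost periodic and quasi-periodic cases: membership in $\mathfrak M_{\varphi}$ controls convergence only on compact intervals, whereas Bochner's criterion needs convergence in the sup-metric on all of $\R$, and ``the uniform-continuity structure upgrades compact-open convergence'' is not an argument. What is actually required is the uniform bridge: for every $\varepsilon>0$ there is $\delta>0$ with $\mathcal T(\psi,\delta)\subseteq\mathcal T(\varphi,\varepsilon)$. This again follows from an interleaving contradiction: if $\tau_k\in\mathcal T(\psi,1/k)$ but $\rho(\varphi(s_k+\tau_k),\varphi(s_k))\ge\varepsilon_0$ for some $s_k$, use Lagrange stability of $\psi$ to extract $\psi^{s_k}\to q$; since $d(\psi^{s_k+\tau_k},\psi^{s_k})\le\sup_{u}\rho_1(\psi(u+\tau_k),\psi(u))<1/k$, also $\psi^{s_k+\tau_k}\to q$, so the interleaved sequence $\{s_1,s_1+\tau_1,s_2,s_2+\tau_2,\dots\}$ lies in $\mathfrak M_{\psi}\subseteq\mathfrak M_{\varphi}$; but evaluating the corresponding $\varphi$-translates at $t=0$ shows they are not Cauchy --- a contradiction. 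Relative density of $\mathcal T(\psi,\delta)$ then transfers to $\mathcal T(\varphi,\varepsilon)$, giving Bohr almost periodicity. Likewise, your parts (iii)--(iv) repeatedly lean on ``comparability lifts to the hulls'', but this requires constructing the continuous shift-equivariant surjection $h:H(\psi)\to H(\varphi)$ with $h(\psi)=\varphi$ (its well-definedness and continuity come, once more, from the interleaving trick), which you never do; it is also what the quasi-periodic case needs, since one must realize $H(\varphi)$ as a factor of the torus flow carrying $\psi$. By contrast, your part (ii) is sound: the bridge lemma $\mathfrak N_{\psi}\subseteq\mathfrak N_{\varphi}\Rightarrow(\forall\varepsilon>0\ \exists\delta>0:\ \mathfrak T(\psi,\delta)\subseteq\mathfrak T(\varphi,\varepsilon))$ and the ensuing transfers of stationarity, periodicity, Levitan almost periodicity, almost recurrence and Poisson stability are correct as stated.
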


\subsection{Variational approach}\label{varapproach}

Recall that $H$ is a separable Hilbert space with norm
$\|\cdot\|_{H}$ and inner product $\langle~,~\rangle_{H}$,
and that $H^{*}$ is the dual space of $H$.
Let $(V,\|\cdot\|_{V})$ be a reflexive Banach space
such that $V\subset H$ continuously and densely.
So we have $H^{*}\subset V^{*}$ continuously and densely.
Identifying $H$ with its dual $H^{*}$ via the Riesz
isomorphism, then we have
$$V\subset H\subset V^{*}$$
continuously and densely. We write $_{V^{*}}\langle~,~\rangle_{V}$
to denote the pairing between $V^{*}$ and $V$. It follows that
$$ _{V^{*}} \langle h,v\rangle_{V}=\langle h,v\rangle_{H}$$
for all $h\in H$, $v\in V$. $(V,H,V^{*})$ is called
{\em Gelfand triple}. Since $H\subset V^{*}$ continuously
and densely, we deduce that $V^{*}$ is separable, hence so is $V$.

Assume that $(V_{1},\|\cdot\|_{V_{1}})$ and
$(V_{2},\|\cdot\|_{V_{2}})$ are reflexive Banach spaces
and embedded in $H$ continuously and densely.
Then we get two triples:
\[
V_{1}\subset H\simeq H^{*}\subset V_{1}^{*}\quad {\rm{and}}\quad
V_{2}\subset H\simeq H^{*}\subset V_{2}^{*}.
\]
We define the norm $\|v\|_{V}:=\|v\|_{V_{1}}+\|v\|_{V_{2}}$
on the space $V:=V_{1}\cap V_{2}$.
Note that $(V,\|\cdot\|_{V})$ is also a Banach space.
Since $V_{1}^{*}$ and $V_{2}^{*}$ can be thought as subspaces
of $V^{*}$, we get a Banach space
$W:=V_{1}^{*}+V_{2}^{*}\subset V^{*}$ with norm
\[
\|f\|_{W}:=\inf\left\{\|f_{1}\|_{V_{1}^{*}}+\|f_{2}\|_{V_{2}^{*}}:
f=f_{1}+f_{2}, ~f_{i}\in V_{i}^{*}, ~i=1,2\right\}.
\]
Similarly, we write $_{V_{i}^{*}}\langle~,~\rangle_{V_{i}}$
to denote the pairing between $V_{i}^{*}$ and $V_{i}$, $i=1,2$.
Then, for all $v\in V$ and $f=f_{1}+f_{2}\in W\subset V^{*}$
we have
\[
_{V^{*}}\langle f,v \rangle_{V}= ~_{V_{1}^{*}}\langle f_{1},
v \rangle_{V_{1}}+~_{V_{2}^{*}}\langle f_{2},v \rangle_{V_{2}}.
\]
Note carefully that if $f\in H$ and $v\in V$, then we obtain
\[
_{V^{*}}\langle f,v \rangle_{V}= ~_{V_{1}^{*}}\langle f,
v \rangle_{V_{1}}=~_{V_{2}^{*}}\langle f,v \rangle_{V_{2}}
=\langle f,v \rangle_{H}.
\]

We write $Pr(H)$ to mean the set of all Borel probability measures
on $H$. Denote by $C_{b}(H)$ the space of all continuous functions
$\varphi:H\rightarrow \R$ for which the norm
$\|\varphi\|_{\infty}:=\sup\limits_{x\in H}|\varphi(x)|$ is finite.
Let $\{\mu_{n}\}:=\{\mu_{n}\}_{n=1}^{\infty}\subset Pr(H)$ and
$\mu\in Pr(H)$. We say $\mu_{n}$ {\em converges weakly} to $\mu$ in
$Pr(H)$ provided $\int \varphi\d\mu_{n}$ converges to
$\int \varphi\d\mu$ for all $\varphi\in C_{b}(H)$.
Let $\varphi\in C_{b}(H)$ be Lipschitz continuous, we define
\begin{equation*}
  \|\varphi\|_{BL}:=Lip(\varphi)+\|\varphi\|_{\infty},
\end{equation*}
where $Lip(\varphi)=\sup\limits_{x\neq y}
\frac{|\varphi(x)-\varphi(y)|}{\|x-y\|_{H}}$. Then $Pr(H)$ is a
separable complete metric space with the following
bounded Lipschitz distance (also called Fortet-Mourier distance)
\[
d_{BL}(\mu,\nu):=\sup\left\{\left|\int \varphi\d\mu
-\int \varphi\d\nu\right|:\|\varphi\|_{BL}\leq1\right\}
\]
for all $\mu,~\nu\in Pr(H)$. It is well known that $d_{BL}$ generates
the weak topology on $Pr(H)$, i.e. $\mu_{n}\rightarrow\mu$ weakly in
$Pr(H)$ if and only if
$d_{BL}(\mu_{n},\mu)\rightarrow0$ as $n\rightarrow\infty$.
See Chapter 11 in \cite{Dudley} for this metric $d_{BL}$
(denoted by $\beta$ there) and its related properties.

We assume in the following exposition that $(\Omega,\mathcal{F},\P)$
is a complete probability space. The space $L^{2}(\Omega,\P;H)$
consists of all $H$-valued random variables $\zeta$ such that
$E\|\zeta\|_{H}^{2}=\int_{\Omega}\|\zeta\|_{H}^{2} \d P<\infty$.
An $H$-valued stochastic process
$X=X(t),~t\in\R$ is called {\em $L^{2}$-bounded} provided
$\sup\limits_{t\in\R}E\|X(t)\|_{H}^{2}<\infty$.
Throughout the paper, we denote by $\L(\zeta)\in Pr(H)$ the law or
distribution of $H$-valued random variable $\zeta$. A sequence of
$H$-valued continuous stochastic processes $\{X_{n}\}$ is said to
{\em converge in distribution to $X$ (on $C(\R,H)$)} provided
$\L(X_{n})$ weakly converges to $\L(X)$ in $Pr(C(\R,H))$,
where $\L(X)$ is the law or distribution of $X$ on $C(\R,H)$.
If $d_{BL} (\L (X_n(t)), \L(X(t)))\to 0$ as $n\to\infty$ for each
$t\in\R$, we simply say that $X_n$
{\em converges in distribution to $X$ on $H$}.

\section{Compatible solutions}
Let $W(t)$, $t\in\R$ be a two-sided cylindrical $Q$-Wiener
process with $Q=I$ on a separable Hilbert space
$(U,\langle~,~\rangle_{U})$ with respect to a complete
filtered probability space
$(\Omega,\mathcal{F},\mathcal{F}_{t},\P)$. Denote by
$L_{2}(U,H)$ the space of all Hilbert-Schmidt operators
from $U$ into $H$.

In this section, coefficient $F$ in \eqref{origeq} need not to be
Lipschitz. Therefore, instead of explicitly writting
$F$ in \eqref{origeq}, we consider the following
stochastic partial differential equation on $H$
\begin{equation}\label{eqSPDE1}
\ \d X(t)=A(t,X(t))\d t+G(t,X(t))\d W(t),
\end{equation}
where $A(t,x)=A_{1}(x)+A_{2}(t,x)$,
$A_{i}:\R\times V_{i}\rightarrow V_{i}^{*}$, $i=1,2$ and
$G:\R\times V\rightarrow L_{2}(U,H)$.

Consider equation \eqref{eqSPDE1}.
Let us introduce the following conditions.
\begin{enumerate}

\item[\textbf{(H1)}](Continuity) For all $u$, $v$, $w \in V$
and $t\in \mathbb{R}$ the map
\begin{equation}\label{hcdef}
\mathbb{R}\ni \theta \mapsto~_{V_{1}^{*}}\langle
A_{1}(u+\theta v),w\rangle_{V_{1}}
\end{equation}
is continuous. $A_{2}:\R\times V_{2}\rightarrow V_{2}^{*}$
and $G:\R\times V\rightarrow L_{2}(U,H)$ are continuous.
Here $A_{1}$ is called {\em hemicontinuity} provided
\eqref{hcdef} hold.
\item[\textbf{(H2)}] (Strong monotonicity) There exist  constants
    $\lambda\geq0$, $r>2$ and $\lambda'\geq0$ such that for
    all $u$, $v\in V$, $t\in \mathbb{R}$
\begin{equation*}
  ~_{V^{*}}\langle A(t,u)-A(t,v),u-v\rangle_{V}
  \leq-\lambda\|u-v\|^{2}_{H}-\lambda'\|u-v\|^{r}_{H}
\end{equation*}
  and
  \begin{equation*}
  \|G(t,u)-G(t,v)\|^{2}_{L_{2}(U,H)}\leq L_{G}^{2}\|u-v\|^{2}_{H}.
  \end{equation*}
\item[\textbf{(H3)}](Coercivity) There  exist constants
$\alpha_{1}, \alpha_{2} \in(1,\infty)$,
$c_{1}\in \mathbb{R}$, $c_{2}, c_{2}'\in(0,\infty)$
and $M_{0}\in(0,\infty)$ such that for all
$v\in V$, $t\in\mathbb{R}$
 \begin{equation*}
   ~_{V^{*}}\langle A(t,v),v\rangle_{V}\leq c_{1}\|v\|^{2}_{H}
   -c_{2}\|v\|^{\alpha_{1}}_{V_{1}}-c_{2}'\|v\|^{\alpha_{2}}_{V_{2}}
   +M_{0}.
 \end{equation*}
\item[\textbf{(H4)}](Boundedness) There exist constants $c_{3},
    c_{3}'\in(0,\infty)$ such that for all
    $v\in V$, $t\in\mathbb{R}$
    $$\|A_{1}(v)\|_{V_{1}^{*}}\leq c_{3}\|v\|^{\alpha_{1}-1}_{V_{1}}+M_{0},\quad
    \|A_{2}(t,v)\|_{V_{2}^{*}}\leq c_{3}'\|v\|^{\alpha_{2}-1}_{V_{2}}+M_{0}$$
    and
 \begin{equation*}
   \|G(t,0)\|_{L_{2}(U,H)}\leq M_{0},
 \end{equation*}
    where $\alpha_{i}$ and $M_{0}$ are as in (H3).
\item[\textbf{(H5)}] $A_{2}$ and $G$ are continuous in
        $t\in\R$ uniformly with respect to $v$ on each
        bounded subset $Q\subset V$.
\end{enumerate}

\begin{remark}\rm
Since we consider compatible solutions (see Definition
\ref{compatible}) by the method of dynamical systems,
we assume that $A_{2}$ and $G$ satisfy (H1) and (H5)
that are different from the usual situation (i.e. we request stronger continuity conditions here). Under
conditions of (H1)--(H2) and (H4)--(H5), $(H(A_{2}),\R,\sigma)$ and
$(H(G),\R,\sigma)$ are dynamical systems, where
$\sigma:\R\times H(A_{2})\rightarrow H(A_{2}),
~(\tau,\tilde{A}_{2})\mapsto \tilde{A}_{2}^{\tau}$ and similarly for the action $\sigma$ on $H(G)$.
Note that we only need hemicontinuity of $A_{2}$ and do not need (H5), as usual, when we consider estimates of solutions,
such as Lemmas \ref{pmes}--\ref{pe}, Theorems \ref{Boundedth} and \ref{gasms}, Proposition \ref{tightprop},
Lemma \ref{conlemma}.
\end{remark}

\begin{definition}[see, e.g. \cite{PR2007, Zhang2009}]
\label{vsolution}\rm
We say  continuous $H$-valued $(\mathcal{F}_{t})$-adapted
process $X(t)$, $t\in[0,T]$ is a
{\em solution} to equation \eqref{eqSPDE1},
if $X\in \cap_{i=1,2}L^{\alpha_{i}}([0,T]\times \Omega,\d t\otimes
\P;V_{i})\cap L^{2}([0,T]\times \Omega, \d t\otimes \P;H)$
with $\alpha_{i}$ as in (H3) and $\P$-a.s.
\begin{equation}\label{defsol}
\ X(t)=X(s)+\int^{t}_{s}A(\sigma,X(\sigma))\d\sigma
+\int^{t}_{s}G(\sigma,X(\sigma))\d W(\sigma),
\quad 0\leq s\leq t\leq T.
\end{equation}
Moreover, we say $X(t),t\in\R$ is a {\em solution} to equation \eqref{eqSPDE1} provided \eqref{defsol} holds for all $t\geq s$ and each $s\in\R$.
\end{definition}

Fix $s\in\R$. Under conditions (H1)--(H4), for any $\zeta\in L^{2}(\Omega, \mathcal{F}_{s}, \P;H)$ and $T>0$ there exists a unique solution $X(t,s,\zeta),s\leq t\leq s+T$ to \eqref{eqSPDE1} with initial condition $X(s,s,\zeta)=\zeta$
(see, e.g. \cite{PR2007}).
In this paper, we write $C_{\alpha}$ to mean some positive
constant which depends on $\alpha$. Here $\alpha$ is one or more
than one parameter and $C_{\alpha}$ may change from line to line.
Now we discuss the $L^{2}$-bounded solution to equation
\eqref{eqSPDE1} by employing the classical pullback attraction
method in random and non-autonomous dynamics
(see, e.g. \cite{CML2020, DT1995} etc).
For this we need three lemmas.

\begin{lemma}\label{pmes}
Assume that {\rm{(H1)}}--{\rm{(H4)}} hold.
Let $\zeta_{s}\in L^{2}(\Omega, \mathcal{F}_{s}, \P;H)$
and $X(t,s,\zeta_{s})$, $t\geq s$ be the solution to
the following Cauchy problem
\begin{equation*}
  \left\{
   \begin{aligned}
   &\ \d X(t)=A(t,X(t))\d t+G(t,X(t))\d W(t)\\
  &\ X(s)=\zeta_{s}.
   \end{aligned}
   \right.
  \end{equation*}
\begin{enumerate}
\item If $2\lambda>L_{G}^{2}$, let $\eta\in(0,2\lambda-L_{G}^{2})$.
Then there exist constants $1\leq p<\frac{\eta}{2L_{G}^{2}}+1$
and $\kappa,M_{1}>0$ such that
\begin{equation}\label{ineq2}
E\|X(t,s,\zeta_{s})\|^{2p}_{H}\leq {\rm{e}}^{-\kappa(t-s)}
E\|\zeta_{s}\|^{2p}_{H}+M_{1},
\end{equation}
where $M_{1}$ depends only on $\eta,c_{2},c_{3},c_{2}',
c_{3}',\alpha_{1},\alpha_{2},\kappa,p,r$.
\item If $\lambda'>0$ then estimate \eqref{ineq2} hold
for any $p\in[1,+\infty)$ and $\kappa>0$.
\end{enumerate}
\end{lemma}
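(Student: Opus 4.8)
The plan is to bound the $2p$-th moment by applying the It\^o formula in the variational (Gelfand triple) setting to the functional $x\mapsto\|x\|_H^{2p}$ along $X(t):=X(t,s,\zeta_s)$; see \cite{PR2007}. Because $A$ and $G$ have only polynomial growth while $X$ a priori lives in the spaces of Definition \ref{vsolution}, I would first localise by the stopping times $\tau_n:=\inf\{t\ge s:\|X(t)\|_H\ge n\}$, so that every integrand is genuinely integrable and the stochastic term is a true martingale; the final bound is recovered by letting $n\to\infty$ and invoking Fatou's lemma (using that $X$ is $H$-continuous, so $\tau_n\to\infty$ a.s.). Writing $Y(t):=\|X(t)\|_H^2$, the It\^o formula yields
\[
\d Y^p = p Y^{p-1}\big(2\,{}_{V^{*}}\langle A(t,X),X\rangle_{V}+\|G(t,X)\|_{L_{2}(U,H)}^{2}\big)\,\d t + 2p(p-1)Y^{p-2}\|G(t,X)^{*}X\|_{U}^{2}\,\d t + \d M(t),
\]
where $M(t)=2p\int_s^t Y^{p-1}\langle X,G(t,X)\,\d W\rangle_{H}$ is a local martingale vanishing in expectation after localisation.

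The heart of the matter is the drift estimate. Combining strong monotonicity (H2) at the reference point $0$ with the bounds (H4) on $A_1(0),A_2(t,0)$ and the negative coercive $V$-norm terms of (H3) (which absorb the resulting $\|X\|_{V_i}$-contributions by Young's inequality, and may then be discarded), I would derive the effective dissipativity bound $2\,{}_{V^{*}}\langle A(t,X),X\rangle_{V}\le -2\lambda\|X\|_H^{2}-2\lambda'\|X\|_H^{r}+C$, with $C$ depending only on $c_2,c_2',c_3,c_3',\alpha_1,\alpha_2,M_0,r$. Together with $\|G(t,X)\|_{L_{2}(U,H)}^{2}\le (L_G^{2}+\delta)\|X\|_H^{2}+C_\delta$ (from (H2),(H4), the cross term costing an arbitrarily small coefficient excess $\delta$) and $\|G(t,X)^{*}X\|_{U}^{2}\le\|G(t,X)\|^{2}\|X\|_H^{2}$, the coefficient of $Y^p=\|X\|_H^{2p}$ in the drift becomes $p\big(-2\lambda+(2p-1)(L_G^{2}+\delta)\big)$. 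The constraint $2L_G^{2}(p-1)<\eta<2\lambda-L_G^{2}$ of (i) is precisely what forces $(2p-1)L_G^{2}<2\lambda$ with room to spare for $\delta$, so this coefficient equals $-\kappa$ for some $\kappa>0$; after absorbing the remaining sub-$2p$ powers of $\|X\|_H$ into $\tfrac{\kappa}{2}Y^p$ plus a constant $M_1$ via Young's inequality, I obtain $\frac{\d}{\d t}EY^p\le -\kappa EY^p+M_1$, and the integrating factor $\mathrm{e}^{\kappa(t-s)}$ (Gronwall) delivers \eqref{ineq2}.

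For part (ii) the mechanism is different and cleaner. When $\lambda'>0$, the dissipative term contributes $-2\lambda' p\,\|X\|_H^{2p-2+r}$ to the drift, and since $r>2$ this is a strictly higher power of $\|X\|_H$ than $\|X\|_H^{2p}$. Hence for \emph{every} $p\ge 1$ Young's inequality lets this single term dominate both the quadratic It\^o-correction term $2p(p-1)L_G^{2}\|X\|_H^{2p}$ and, for \emph{any} prescribed $\kappa>0$, the term $\kappa\|X\|_H^{2p}$, at the cost of an additive constant. This removes the restriction on $p$ and the coupling $2\lambda>L_G^{2}$ entirely, giving \eqref{ineq2} for all $p\in[1,\infty)$ and any $\kappa>0$.

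I expect the main obstacle to be twofold: rigorously justifying the It\^o formula and the passage $n\to\infty$ for solutions with only variational (polynomial-growth) regularity; and the sharp constant bookkeeping in the drift estimate — in particular, combining (H2)--(H4) so that the possibly non-dissipative lower-order contributions are genuinely absorbed by the negative coercive terms, and so that the threshold $(2p-1)L_G^{2}<2\lambda$ emerges with the stated $\kappa$ rather than a lossy one. The parameter $\eta$ is the device that quantifies this margin, budgeting the gap $2\lambda-L_G^{2}$ between the It\^o correction $2(p-1)L_G^{2}$, the $G$-cross-term excess $\delta$, and the final decay rate $\kappa$.
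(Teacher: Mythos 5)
Your proposal follows essentially the same route as the paper's proof: It\^o's formula for $\|X\|_{H}^{2p}$ (the paper applies it directly to ${\rm e}^{\kappa(t-s)}\|X\|_{H}^{2p}$, which is equivalent to your integrating-factor step), the drift bound obtained from (H2) at the reference point $0$ combined with the coercive terms of (H3), the bounds (H4) and Young's inequality, the same threshold $2(p-1)L_{G}^{2}<\eta<2\lambda-L_{G}^{2}$ producing $\kappa>0$ in case (i), and in case (ii) the domination of all lower powers by the term $-\lambda'p\|X\|_{H}^{2p-2+r}$ for arbitrary $p$ and $\kappa$. The only caveat is that your intermediate claim ${}_{V^{*}}\langle A(t,X),X\rangle_{V}\le -\lambda\|X\|_{H}^{2}-\lambda'\|X\|_{H}^{r}+\tfrac{C}{2}$ with the exact constants $\lambda,\lambda'$ is slightly too strong, since giving (H3) a positive weight $\theta$ to absorb the $M_{0}\|X\|_{V_{i}}$ terms replaces $\lambda$ by $(1-\tfrac{\theta}{2})\lambda-\tfrac{\theta}{2}c_{1}$ (and shrinks $\lambda'$ accordingly); but because the inequalities on $\eta$ and $p$ are strict, this loss is absorbed by precisely the margin your $\eta$-budgeting (and the paper's choice $\kappa<\eta p-2p(p-1)L_{G}^{2}$) provides, so the argument is sound.
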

\begin{proof}
By (H2)--(H4) and Young's inequality, we have
\begin{align}\label{pmeseq3}
&
  2_{V^{*}}\langle A(t,u),u\rangle_{V}
  +\|G(t,u)\|^{2}_{L_{2}(U,H)}\\\nonumber
&
  \leq
  \begin{cases}
    -\eta\|u\|_{H}^{2}+C_{\alpha_{1},\alpha_{2},c_{2},c_{2}',M_{0}},
    &\text{if  $2\lambda>L_{G}^{2}$}\\
    -\lambda'\|u\|_{H}^{r}+(c_{1}+2L_{G}^{2}-\lambda)\|u\|_{H}^{2}
    +C_{\alpha_{1},\alpha_{2},c_{2},c_{2}',M_{0}},
    &\text{if $\lambda'>0$}.
  \end{cases}
\end{align}
Given $\kappa>0$ and $p\geq1$,
in view of It\^o's formula (see, e.g. \cite[Theorem 4.2.5]{PR2007}), we get
\begin{align}\label{pmeseq4}
&
E\left({\rm{e}}^{\kappa(t-s)}\|X(t,s,\zeta_{s})\|_{H}^{2p}\right)
\\\nonumber
&
=E\|\zeta_{s}\|_{H}^{2p}+\int_{s}^{t}\kappa{\rm{e}}^{\kappa(\sigma-s)}
E\|X(\sigma,s,\zeta_{s})\|_{H}^{2p}\d\sigma\\\nonumber
&\quad
+pE\int_{s}^{t}\|X(\sigma,s,\zeta_{s})\|_{H}^{2p-2}
{\rm{e}}^{\kappa(\sigma-s)}\bigg(2_{V^{*}}\langle
A(\sigma,X(\sigma,s,\zeta_{s})),X(\sigma,s,\zeta_{s})\rangle_{V}
\\\nonumber
&\qquad
+\|G(\sigma,X(\sigma,s,\zeta_{s}))\|^{2}_{L_{2}(U,H)}\bigg)\d\sigma
\\\nonumber
&\quad
+2p(p-1)E\int_{s}^{t}{\rm{e}}^{\kappa(\sigma-s)}
\|X(\sigma,s,\zeta_{s})\|_{H}^{2p-4}\|\left(
G(\sigma,X(\sigma,s,\zeta_{s}))\right)^{*}
X(\sigma,s,\zeta_{s})\|_{U}^{2}\d\sigma.
\end{align}

If  $2\lambda>L_{G}^{2}$, according to
\eqref{pmeseq3}--\eqref{pmeseq4}, (H2) and Young's inequality,
we obtain
\begin{align*}
&
E\left({\rm{e}}^{\kappa(t-s)}\|X(t,s,\zeta_{s})\|_{H}^{2p}\right) \\
&
\leq E\|\zeta_{s}\|_{H}^{2p}
+\int_{s}^{t}\kappa{\rm{e}}^{\kappa(\sigma-s)}
E\|X(\sigma,s,\zeta_{s})\|_{H}^{2p}\d\sigma\\
&\quad
+pE\int_{s}^{t}\|X(\sigma,s,\zeta_{s})\|_{H}^{2p-2}
{\rm{e}}^{\kappa(\sigma-s)}\left(-\eta\|X(\sigma,s,\zeta_{s})\|_{H}^{2}
+C_{\alpha_{1},\alpha_{2},c_{2},c_{2}',M_{0}}\right)\d\sigma\\
&\quad
+2p(p-1)E\int_{s}^{t}{\rm{e}}^{\kappa(\sigma-s)}
\|X(\sigma,s,\zeta_{s})\|_{H}^{2p-2}
\left(L_{G}^{2}\|X(\sigma,s,\zeta_{s})\|_{H}^{2}+\varepsilon
L_{G}^{2}\|X(\sigma,s,\zeta_{s})\|_{H}^{2}
+C_{\varepsilon,M_{0}}\right)\d\sigma\\
&
\leq E\|\zeta_{s}\|_{H}^{2p}+E\int_{s}^{t}{\rm{e}}^{\kappa(\sigma-s)}
\left(\kappa-\eta p+2p(p-1)\varepsilon L_{G}^{2}+2p(p-1) L_{G}^{2}\right)\|X(\sigma,s,\zeta_{s})\|_{H}^{2p}\d\sigma\\
&\quad
+E\int_{s}^{t}{\rm{e}}^{\kappa(\sigma-s)}\left(
C_{\alpha_{1},\alpha_{2},c_{2},c_{2}',M_{0},\varepsilon}p
+C_{\varepsilon}p(p-1)\right)\|X(\sigma,s,\zeta_{s})\|_{H}^{2p-2}
\d\sigma.
\end{align*}
Let $1\leq p<\frac{\eta}{2L_{G}^{2}}+1$ and $\kappa\in(0,\eta p
-2p(p-1)L_{G}^{2})$. Employing Young's inequality and taking
$\varepsilon$ small enough, we obtain
\begin{equation*}
E\|X(t,s,\zeta_{s})\|^{2p}_{H}\leq {\rm{e}}^{-\kappa(t-s)}
E\|\zeta_{s}\|^{2p}_{H}+M_{1}.
\end{equation*}

If $\lambda'>0$, for any $\kappa>0$, by
\eqref{pmeseq3}--\eqref{pmeseq4}, (H2) and Young's inequality we have
\begin{align*}
&
E\left({\rm{e}}^{\kappa(t-s)}\|X(t,s,\zeta_{s})\|_{H}^{2p}\right) \\
&
\leq E\|\zeta_{s}\|_{H}^{2p}
+\int_{s}^{t}\kappa{\rm{e}}^{\kappa(\sigma-s)}
E\|X(\sigma,s,\zeta_{s})\|_{H}^{2p}\d\sigma
+pE\int_{s}^{t}\|X(\sigma,s,\zeta_{s})\|_{H}^{2p-2}
{\rm{e}}^{\kappa(\sigma-s)}\bigg(
C_{\alpha_{1},\alpha_{2},c_{2},c_{2}',M_{0}}\\
&\qquad
+\left(c_{1}+2L_{G}^{2}-\lambda\right)
\|X(\sigma,s,\zeta_{s})\|_{H}^{2}
-\lambda'\|X(\sigma,s,\zeta_{s})\|_{H}^{r}\bigg)\d\sigma\\
&\quad
+2p(p-1)E\int_{s}^{t}{\rm{e}}^{\kappa(\sigma-s)}
\|X(\sigma,s,\zeta_{s})\|_{H}^{2p-2}
\left(2L_{G}^{2}\|X(\sigma,s,\zeta_{s})\|_{H}^{2}+2M_{0}^{2}\right)
\d\sigma\\
&
\leq E\|\zeta_{s}\|_{H}^{2p}
+E\int_{s}^{t}{\rm{e}}^{\kappa(\sigma-s)}\bigg[-\lambda'p
\|X(\sigma,s,\zeta_{s})\|_{H}^{r+2p-2}+\Big(\kappa+p\left(c_{1}
+2L_{G}^{2}-\lambda\right)\\
&\qquad
+4p(p-1)L_{G}^{2}\Big)
\|X(\sigma,s,\zeta_{s})\|_{H}^{2p}
+\left(pC_{\alpha_{1},\alpha_{2},c_{2},c_{2}',M_{0}}
+4p(p-1)M_{0}^{2}\right)\|X(\sigma,s,\zeta_{s})\|_{H}^{2p-2}\bigg]
\d\sigma\\
&
\leq E\|\zeta_{s}\|_{H}^{2p}
+E\int_{s}^{t}{\rm{e}}^{\kappa(\sigma-s)}
C_{\alpha_{1},\alpha_{2},c_{2},c_{2}',M_{0},p,r}\d\sigma.
\end{align*}
Therefore, for any $p\in[1,+\infty)$ and $\kappa>0$
\begin{equation*}
E\|X(t,s,\zeta_{s})\|^{2p}_{H}\leq {\rm{e}}^{-\kappa(t-s)}
E\|\zeta_{s}\|^{2p}_{H}+M_{1}.
\end{equation*}
\end{proof}

\begin{lemma}\label{ivcont}
Consider equation \eqref{eqSPDE1}.
Assume that $2\lambda-L_{G}^{2}\geq0$ and
{\rm{(H1)}}--{\rm{(H4)}} hold.
Let $X$ and $Y$ be two solutions of equation \eqref{eqSPDE1}.
If $\lambda'>0$ or $2\lambda>L_{G}^{2}$,
then for any $s\leq t$ we have
\begin{align}\label{ANSCP}
   & E\|X(t,s,X(s))-Y(t,s,Y(s))\|^{2}_{H} \\\nonumber
   & \leq \begin{cases}
    E\|X(s)-Y(s)\|_{H}^{2}\wedge \left\{\lambda'(r-2)
   (t-s)\right\}^{-\frac{2}{r-2}}, &\text{if $\lambda'>0$}\\
	{\rm{e}}^{-(2\lambda-L_{G}^{2})(t-s)}E\|X(s)-Y(s)\|_{H}^{2},
   &\text{if $2\lambda>L_{G}^{2}$.}
  \end{cases}
\end{align}

In particular, for any $t\in\R$ there exists some random
variable $X(t)$ such that
\begin{equation}\label{L2lim2}
  X(t,-n,0)\rightarrow X(t)\quad {\rm{in}}~ L^{2}(\Omega,\P;H)
  ~ {\rm{as}}~ n\rightarrow\infty.
\end{equation}
\end{lemma}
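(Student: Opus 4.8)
The plan is to apply It\^o's formula to the squared $H$-norm of the difference of two solutions and then exploit the strong monotonicity condition (H2). Set $Z(t):=X(t,s,X(s))-Y(t,s,Y(s))$; since both $X$ and $Y$ solve \eqref{eqSPDE1} with the same coefficients, the It\^o formula in the variational framework (see \cite{PR2007, Zhang2009}) gives, after taking expectations and using that the stochastic integral is a mean-zero martingale,
\begin{equation*}
E\|Z(t)\|_{H}^{2}=E\|Z(s)\|_{H}^{2}+\int_{s}^{t}E\Big(2_{V^{*}}\langle A(\sigma,X)-A(\sigma,Y),Z\rangle_{V}+\|G(\sigma,X)-G(\sigma,Y)\|_{L_{2}(U,H)}^{2}\Big)\d\sigma.
\end{equation*}
Invoking (H2) for both terms of the integrand and writing $\phi(t):=E\|Z(t)\|_{H}^{2}$, I obtain the integral inequality
\begin{equation*}
\phi(t)\leq\phi(s)-(2\lambda-L_{G}^{2})\int_{s}^{t}\phi(\sigma)\d\sigma-2\lambda'\int_{s}^{t}E\|Z(\sigma)\|_{H}^{r}\d\sigma.
\end{equation*}

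From here the two cases are treated separately. When $2\lambda>L_{G}^{2}$, I discard the nonpositive last term and apply Gronwall's inequality to conclude $\phi(t)\leq {\rm e}^{-(2\lambda-L_{G}^{2})(t-s)}\phi(s)$, which is the second branch of \eqref{ANSCP}. When $\lambda'>0$ (and $2\lambda-L_{G}^{2}\geq0$), I instead discard the middle term; since $r>2$, Jensen's inequality yields $E\|Z(\sigma)\|_{H}^{r}\geq\phi(\sigma)^{r/2}$, so that $\phi$ is nonincreasing and satisfies $\phi'(t)\leq-2\lambda'\phi(t)^{r/2}$ for a.e. $t$. Integrating this Bernoulli-type differential inequality gives $\phi(t)^{-(r-2)/2}\geq\phi(s)^{-(r-2)/2}+(r-2)\lambda'(t-s)\geq(r-2)\lambda'(t-s)$, hence $\phi(t)\leq\{\lambda'(r-2)(t-s)\}^{-2/(r-2)}$; combining this with the monotonicity bound $\phi(t)\leq\phi(s)=E\|X(s)-Y(s)\|_{H}^{2}$ produces the minimum appearing in the first branch of \eqref{ANSCP}.

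For \eqref{L2lim2}, I first observe that taking $X(s)=Y(s)$ in \eqref{ANSCP} forces $E\|X(t)-Y(t)\|_{H}^{2}=0$, which yields uniqueness of solutions and hence the flow (cocycle) identity $X(t,-m,0)=X(t,-n,X(-n,-m,0))$ for $m>n$. Applying \eqref{ANSCP} with initial time $s=-n$ to the two solutions started at $0$ and at $X(-n,-m,0)$, and using Lemma \ref{pmes} with $p=1$ to bound $E\|X(-n,-m,0)\|_{H}^{2}\leq M_{1}$, I get
\begin{equation*}
E\|X(t,-n,0)-X(t,-m,0)\|_{H}^{2}\leq\begin{cases}\{\lambda'(r-2)(t+n)\}^{-2/(r-2)},&\lambda'>0,\\ {\rm e}^{-(2\lambda-L_{G}^{2})(t+n)}M_{1},&2\lambda>L_{G}^{2},\end{cases}
\end{equation*}
and in either case the right-hand side tends to $0$ as $n\to\infty$ uniformly in $m>n$. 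Thus $\{X(t,-n,0)\}_{n}$ is Cauchy in $L^{2}(\Omega,\P;H)$ and converges to a limit $X(t)$.

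The main obstacle I anticipate is the rigorous justification of the It\^o formula for $\|Z\|_{H}^{2}$ in the Gelfand-triple setting, in particular verifying the integrability required for the stochastic integral to be a genuine mean-zero martingale and for the $V^{*}$--$V$ pairing terms to be well defined; I would invoke this from \cite{PR2007, Zhang2009}. A secondary technical point is the passage from the integral inequality to the pointwise differential inequality $\phi'\leq-2\lambda'\phi^{r/2}$ in the case $\lambda'>0$, which rests on the absolute continuity of $\phi$ together with Jensen's inequality.
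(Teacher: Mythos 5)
Your proposal is correct and follows essentially the same route as the paper's proof: It\^o's formula for $E\|Z(t)\|_{H}^{2}$ combined with (H2), then Gronwall's lemma in the case $2\lambda>L_{G}^{2}$, and Jensen's inequality together with a comparison (Bernoulli-type) integration in the case $\lambda'>0$. The only material addition is your explicit Cauchy-sequence argument for \eqref{L2lim2} via the flow property and Lemma \ref{pmes}, a step the paper states but leaves implicit once \eqref{ANSCP} is established.
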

\begin{proof}
If $2\lambda>L_{G}^{2}$, by It\^o's formula and (H2)  we get
\begin{align*}
&
E\|X(t,s,X(s))-Y(t,s,Y(s))\|_{H}^{2}\\
&
\leq E\|X(s)-Y(s)\|_{H}^{2}+E\int_{s}^{t}\left(-2\lambda+L_{G}^{2}\right)
\|X(\sigma,s,X(s))-Y(\sigma,s,Y(s))\|_{H}^{2}\d\sigma.
\end{align*}
It follows from Gronwall's lemma that
\begin{equation*}
E\|X(t,s,X(s))-Y(t,s,Y(s))\|_{H}^{2}
\leq {\rm{e}}^{-(2\lambda-L_{G}^{2})(t-s)}E\|X(s)-Y(s)\|_{H}^{2}.
\end{equation*}

If $\lambda'>0$ and $2\lambda\geq L_{G}^{2}$, in view of
It\^o's formula and (H2), we have
\begin{align*}
&
E\|X(t,s,X(s))-Y(t,s,Y(s))\|_{H}^{2}\\
&
\leq E\|X(s)-Y(s)\|_{H}^{2}+E\int_{s}^{t}-2\lambda'
\|X(\sigma,s,X(s))-Y(\sigma,s,Y(s))\|_{H}^{r}\d\sigma\\
&
\leq E\|X(s)-Y(s)\|_{H}^{2}-2\lambda'\int_{s}^{t}\left(E
\|X(\sigma,s,X(s))-Y(\sigma,s,Y(s))\|_{H}^{2}\right)^{\frac{r}{2}}
\d\sigma.
\end{align*}
Employing comparison theorem, we obtain
\begin{equation*}
E\|X(t,s,X(s))-Y(t,s,Y(s))\|^{2}_{H}
\leq E\|X(s)-Y(s)\|_{H}^{2}\wedge \left\{\lambda'(r-2)
   (t-s)\right\}^{-\frac{2}{r-2}}.
\end{equation*}
\end{proof}

\begin{lemma}\label{pe}
Suppose that {\rm{(H1)--(H4)}} hold. Let $X(t,s,\zeta_{s})$ be a
solution to equation \eqref{eqSPDE1} with initial value
$X(s,s,\zeta_{s})=\zeta_{s}$. We have
\begin{align}\label{peq}
&
E\left(\sup_{t\in[s,s+T]}\|X(t,s,\zeta_{s})\|_{H}^{2}\right)
+E\int_{s}^{s+T}\left(\|X(t,s,\zeta_{s})\|_{V_{1}}^{\alpha_{1}}
+\|X(t,s,\zeta_{s})\|_{V_{2}}^{\alpha_{2}}\right)\d t\\\nonumber
&\quad
+E\int_{s}^{s+T}\left(\|A_{1}(X(t,s,\zeta_{s}))\|_{V_{1}^{*}}
^{\frac{\alpha_{1}}{\alpha_{1}-1}}
+\|A_{2}(t,X(t,s,\zeta_{s}))\|_{V_{2}^{*}}^{\frac{\alpha_{2}}
{\alpha_{2}-1}}\right)\d t\\\nonumber
&
\leq C_{c_{1},L_{G},M_{0},T}\left(1
+E\|\zeta_{s}\|_{H}^{2}\right)
\end{align}
for any $s\in\R, T>0$.
\end{lemma}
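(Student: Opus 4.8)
The plan is to apply It\^o's formula to $\|X(t,s,\zeta_s)\|_H^2$ in the Gelfand-triple framework and then read off the three groups of terms one at a time. Writing $X=X(\sigma,s,\zeta_s)$ for brevity, the It\^o formula for the variational solution (see \cite{PR2007}) gives, for $t\in[s,s+T]$,
\begin{align*}
\|X(t,s,\zeta_s)\|_H^2
&=\|\zeta_s\|_H^2
+\int_s^t\Big(2\,{}_{V^*}\langle A(\sigma,X),X\rangle_V
+\|G(\sigma,X)\|_{L_2(U,H)}^2\Big)\d\sigma\\
&\quad+2\int_s^t\langle X,G(\sigma,X)\d W(\sigma)\rangle_H.
\end{align*}
To make the stochastic integral a genuine martingale I would first localise with the stopping times $\tau_N:=\inf\{t\ge s:\|X(t,s,\zeta_s)\|_H>N\}\wedge(s+T)$, carry out all the estimates up to $t\wedge\tau_N$, and recover the stated bounds by letting $N\to\infty$ via Fatou's lemma and monotone convergence.

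Next I would feed the structural hypotheses into the integrand. Coercivity (H3) controls the pairing, while (H2) and (H4) give $\|G(\sigma,v)\|_{L_2(U,H)}^2\le 2L_G^2\|v\|_H^2+2M_0^2$; together these yield
\begin{align*}
2\,{}_{V^*}\langle A(\sigma,X),X\rangle_V+\|G(\sigma,X)\|_{L_2(U,H)}^2
\le(2c_1+2L_G^2)\|X\|_H^2-2c_2\|X\|_{V_1}^{\alpha_1}
-2c_2'\|X\|_{V_2}^{\alpha_2}+2M_0+2M_0^2.
\end{align*}
Dropping the negative coercive terms, taking expectations (valid after localisation, since the stochastic integral then vanishes in mean), and applying Gronwall's lemma on $[s,s+T]$ produces the pointwise bound $\sup_{t\in[s,s+T]}E\|X(t,s,\zeta_s)\|_H^2\le C_{c_1,L_G,M_0,T}(1+E\|\zeta_s\|_H^2)$.

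With this in hand the two integral terms are routine. Retaining the coercive terms instead of discarding them and rearranging the expected It\^o identity bounds $E\int_s^{s+T}(2c_2\|X\|_{V_1}^{\alpha_1}+2c_2'\|X\|_{V_2}^{\alpha_2})\d\sigma$ by $E\|\zeta_s\|_H^2$ plus an integral of $(2c_1+2L_G^2)E\|X\|_H^2+2M_0+2M_0^2$, which the $H$-estimate controls by $C(1+E\|\zeta_s\|_H^2)$; this settles the $\|X\|_{V_i}^{\alpha_i}$ term. The $A_i$ term then follows from (H4), since $\|A_1(v)\|_{V_1^*}^{\alpha_1/(\alpha_1-1)}\le C(\|v\|_{V_1}^{\alpha_1}+M_0^{\alpha_1/(\alpha_1-1)})$ and analogously for $A_2$, so integrating and inserting the $V_i$-bound just obtained closes these terms.

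The step that needs real care --- and the main obstacle --- is the supremum inside the expectation, since the preceding argument only yields $\sup_t E\|X(t)\|_H^2$. Here I would take the supremum over $t\in[s,s+T]$ in the (localised) It\^o identity before taking expectations and estimate the martingale term by the Burkholder-Davis-Gundy inequality,
\begin{align*}
E\sup_{t\in[s,s+T]}\Big|\int_s^t\langle X,G(\sigma,X)\d W(\sigma)\rangle_H\Big|
\le C\,E\Big(\sup_{t\in[s,s+T]}\|X(t)\|_H^2
\int_s^{s+T}\|G(\sigma,X)\|_{L_2(U,H)}^2\d\sigma\Big)^{1/2},
\end{align*}
using $\|G^*(\sigma,X)X\|_U\le\|G(\sigma,X)\|_{L_2(U,H)}\|X\|_H$ for the quadratic variation. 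A Cauchy--Schwarz/Young split of the form $ab\le\tfrac14a^2+b^2$ then absorbs $\tfrac14 E\sup_t\|X(t)\|_H^2$ into the left-hand side, leaving a remainder dominated by $E\int_s^{s+T}\|G(\sigma,X)\|_{L_2(U,H)}^2\d\sigma$, which by the $G$-bound and the $H$-estimate is again $\le C(1+E\|\zeta_s\|_H^2)$. Collecting the three contributions gives the asserted inequality.
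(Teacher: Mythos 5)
Your proof is correct and follows essentially the same route as the paper's: It\^o's formula for $\|X\|_{H}^{2}$, the structural bounds from (H2)--(H4), the Burkholder--Davis--Gundy inequality with a Young-type absorption of $E\sup_{t}\|X(t)\|_{H}^{2}$ into the left-hand side, Gronwall's lemma, retention of the coercive terms for the $V_{i}$-integrals, and finally (H4) for the $A_{i}$-terms. The only differences are cosmetic: you localise with stopping times (a technicality the paper suppresses) and you establish the pointwise bound $\sup_{t}E\|X(t)\|_{H}^{2}$ before treating the supremum, whereas the paper applies Gronwall directly to $E\sup_{t}\|X(t)\|_{H}^{2}$.
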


\begin{proof}
By It\^o's formula, (H2) and (H3), we have
\begin{align}\label{peq1}
&
\|X(t,s,\zeta_{s})\|_{H}^{2}\\\nonumber
&
=\|\zeta_{s}\|_{H}^{2}+\int_{s}^{t}\left(2_{V^{*}}\langle
A(\sigma,X(\sigma,s,\zeta_{s})),X(\sigma,s,\zeta_{s})\rangle_{V}+
\|G(\sigma,X(\sigma,s,\zeta_{s}))\|_{L_{2}(U,H)}^{2}\right)\d\sigma\\\nonumber
&\quad
+2\int_{s}^{t}\langle X(\sigma,s,\zeta_{s}),
G(\sigma,X(\sigma,s,\zeta_{s}))\d W(\sigma)\rangle_{H}\\\nonumber
&
\leq\|\zeta_{s}\|_{H}^{2}+\int_{s}^{t}\Big(2c_{1}
\|X(\sigma,s,\zeta_{s})\|_{H}^{2}
-2c_{2}\|X(\sigma,s,\zeta_{s})\|_{V_{1}}^{\alpha_{1}}
-2c'_{2}\|X(\sigma,s,\zeta_{s})\|_{V_{2}}^{\alpha_{2}}+2M_{0}\\\nonumber
&\qquad
+2L_{G}^{2}\|X(\sigma,s,\zeta_{s})\|_{H}^{2}+2M_{0}^{2}\Big)\d\sigma
+2\int_{s}^{t}\langle X(\sigma,s,\zeta_{s}),
G(\sigma,X(\sigma,s,\zeta_{s}))\d W(\sigma)\rangle_{H}.\\\nonumber
\end{align}
Dropping negative terms on the right of the above inequality,
according to Burkholder-Davis-Gundy inequality
(see, e.g. \cite{PR2007}) and Young's inequality, we get
\begin{align}\label{peq2}
&
E\sup_{t\in[s,s+T]}\|X(t,s,\zeta_{s})\|_{H}^{2}\\\nonumber
&
\leq E\|\zeta_{s}\|_{H}^{2}+E\int_{s}^{s+T}\left(\left(2c_{1}
+2L_{G}^{2}\right)\|X(\sigma,s,\zeta_{s})\|_{H}^{2}+2M_{0}^{2}
+2M_{0}\right)\d\sigma\\\nonumber
&\quad
+6E\left(\int_{s}^{s+T}
\|G(\sigma,X(\sigma,s,\zeta_{s}))\|_{L_{2}(U,H)}^{2}
\|X(\sigma,s,\zeta_{s})\|_{H}^{2}\d\sigma\right)^{\frac{1}{2}}\\\nonumber
&
\leq E\|\zeta_{s}\|_{H}^{2}+E\int_{s}^{s+T}\left(C_{c_{1},L_{G}}
\|X(\sigma,s,\zeta_{s})\|_{H}^{2}+C_{M_{0}}\right)\d\sigma\\\nonumber
&\quad
+\frac{1}{2}E\sup_{t\in[s,s+T]}\|X(t,s,\zeta_{s})\|_{H}^{2}.
\end{align}
By Gronwall's lemma, we obtain
\begin{align}\label{peq4}
E\sup_{t\in[s,s+T]}\|X(t,s,\zeta_{s})\|_{H}^{2}
\leq C_{c_{1},L_{G},T,M_{0}}\left(1+E\|\zeta_{s}\|_{H}^{2}\right).
\end{align}

Take expectations on both sides of \eqref{peq1} and let $t=s+T$,
then by \eqref{peq4} we have
\begin{align*}
E\int_{s}^{s+T}\left(\|X(t,s,\zeta_{s})\|_{V_{1}}^{\alpha_{1}}
+\|X(t,s,\zeta_{s})\|_{V_{2}}^{\alpha_{2}}\right)\d t
\leq C_{c_{1},L_{G},T,M_{0}}\left(1+E\|\zeta_{s}\|_{H}^{2}\right).
\end{align*}
In view of (H4), we complete the proof.
\end{proof}

\begin{theorem}\label{Boundedth}
Consider equation \eqref{eqSPDE1}.
Suppose that $2\lambda-L_{G}^{2}\geq0$ and
{\rm{(H1)}}--{\rm{(H4)}} hold. If $\lambda'>0$
or $2\lambda>L_{G}^{2}$,
then there exists a unique $L^{2}$-bounded
continuous H-valued solution $X(t)$, $t\in\mathbb{R}$ to
equation \eqref{eqSPDE1}. Moreover, the mapping
$\widehat{\mu}:\mathbb{R}\rightarrow Pr(H)$,
defined by $\widehat{\mu}(t):=\P\circ[X(t)]^{-1}$,
is unique with the following properties:
\begin{enumerate}
\item $L^{2}$-boundedness: ~$\sup\limits_{t\in \mathbb{R}}
\int_{H}\|x\|^{2}_{H}\widehat{\mu}(t)(\d x)<+\infty$;
\item Flow property: ~$\mu(t,s,\widehat{\mu}(s))
=\widehat{\mu}(t)$ for all $t\geq s$.
\end{enumerate}
Here $\mu(t,s,\mu_{0})$ denotes the distribution of
$X(t,s,\zeta_{s})$ on $H$, with $\mu_{0}=\P\circ\zeta^{-1}_{s}$.
\end{theorem}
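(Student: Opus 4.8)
The plan is to construct the bounded entire solution by the pullback method and then read off the properties of $\widehat{\mu}$ from Lemmas \ref{pmes}--\ref{pe}. First I would define, for each fixed $t\in\R$, the candidate
\[
X(t):=\lim_{n\to\infty}X(t,-n,0)\quad\text{in }L^{2}(\Omega,\P;H),
\]
whose existence is exactly the content of \eqref{L2lim2} in Lemma \ref{ivcont}. The underlying Cauchy property comes from writing $X(t,-m,0)=X(t,-n,X(-n,-m,0))$ for $m>n$, bounding $E\|X(-n,-m,0)\|_{H}^{2}$ by Lemma \ref{pmes} (with $p=1$ and zero initial data), and applying the contraction estimate \eqref{ANSCP} on $[-n,t]$, whose right-hand side tends to $0$ as $n\to\infty$ in both the $2\lambda>L_{G}^{2}$ and the $\lambda'>0$ cases.

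Next I would verify that $\{X(t)\}_{t\in\R}$ assembles into a genuine entire solution. Fix $s\le t$. Since $X(s,-n,0)\to X(s)$ in $L^{2}$, the continuous dependence on initial data furnished by \eqref{ANSCP} gives $X(t,s,X(s,-n,0))\to X(t,s,X(s))$ in $L^{2}$; but $X(t,s,X(s,-n,0))=X(t,-n,0)$ by uniqueness of the Cauchy problem on $[-n,t]$, and the latter converges to $X(t)$. Hence
\[
X(t)=X(t,s,X(s))\qquad\text{for all }s\le t.
\]
This identity shows simultaneously that the pullback definitions are mutually consistent, so that $X$ solves \eqref{eqSPDE1} on all of $\R$ (the finite-horizon integrability required by Definition \ref{vsolution} being inherited from the Cauchy solutions, cf.\ Lemma \ref{pe}), that $X$ is continuous (on each $[s,\infty)$ it coincides with $X(\cdot,s,X(s))$), and that $X(t)$ is $(\mathcal F_{t})$-adapted as an $L^{2}$-limit of $\mathcal F_{t}$-measurable random variables.

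For the remaining quantitative properties I would proceed as follows. $L^{2}$-boundedness is immediate from Lemma \ref{pmes} with $p=1$ and zero initial data, which yields $E\|X(t,-n,0)\|_{H}^{2}\le M_{1}$ and hence $\sup_{t\in\R}E\|X(t)\|_{H}^{2}\le M_{1}$ after passing to the limit; this is property (i) for $\widehat{\mu}$, since $\int_{H}\|x\|_{H}^{2}\widehat{\mu}(t)(\d x)=E\|X(t)\|_{H}^{2}$. Uniqueness of the bounded solution follows by taking any other $L^{2}$-bounded solution $Y$, writing $E\|X(t)-Y(t)\|_{H}^{2}=E\|X(t,s,X(s))-Y(t,s,Y(s))\|_{H}^{2}$, applying \eqref{ANSCP}, and letting $s\to-\infty$: the uniform bound on $E\|X(s)-Y(s)\|_{H}^{2}$ combined with the decay in \eqref{ANSCP} forces $X(t)=Y(t)$. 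The flow property (ii) then follows because the law of $X(t,s,\zeta_{s})$ depends only on $\L(\zeta_{s})$ (as $\zeta_{s}$ is $\mathcal F_{s}$-measurable, hence independent of the increments of $W$ after $s$, pathwise uniqueness makes the law factor through the initial law); taking $\zeta_{s}=X(s)$ gives $\mu(t,s,\widehat{\mu}(s))=\L(X(t,s,X(s)))=\L(X(t))=\widehat{\mu}(t)$.

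The step I expect to require the most care is the uniqueness of $\widehat{\mu}$ among all maps $\R\to Pr(H)$ satisfying (i)--(ii), because the contraction \eqref{ANSCP} is phrased for random variables rather than for laws. I would lift it to the $L^{2}$-Wasserstein distance $\mathcal W_{2}$ on $Pr_{2}(H)$: given initial laws $\mu_{0},\nu_{0}$, realize an optimal coupling by $\mathcal F_{s}$-measurable random variables and drive both copies with the same $W$, so that \eqref{ANSCP} yields $\mathcal W_{2}(\mu(t,s,\mu_{0}),\mu(t,s,\nu_{0}))^{2}\le C(t-s)\,\mathcal W_{2}(\mu_{0},\nu_{0})^{2}$ with $C(t-s)\to 0$ as $t-s\to\infty$. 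If $\nu$ is a further solution of (i)--(ii), then $\mathcal W_{2}(\nu(t),\widehat{\mu}(t))^{2}\le C(t-s)\,\mathcal W_{2}(\nu(s),\widehat{\mu}(s))^{2}$, and the uniform second-moment bound (i) makes the right-hand side vanish as $s\to-\infty$, giving $\nu=\widehat{\mu}$ and completing the proof.
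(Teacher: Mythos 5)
Your proposal is correct, but it replaces the technical core of the paper's proof by a genuinely different argument. The paper shows that the pullback limit $X(t)=\lim_{n\to\infty}X(t,-n,0)$ is an entire solution via the variational machinery: on each $[a,b]$ it uses Lemma \ref{pe} and reflexivity to extract weak limits $Y_{i}$ of $A_{i}(\cdot,X(\cdot,-n,0))$ and $Z$ of $G(\cdot,X(\cdot,-n,0))$, writes $X(t)=X(a)+\int_{a}^{t}Y\,\d\sigma+\int_{a}^{t}Z\,\d W$, and identifies $Y=A(\cdot,X)$, $Z=G(\cdot,X)$ by Minty's monotonicity trick --- this is exactly where (H1) and (H2) are exploited. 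You bypass all of this: since the Cauchy problem is well posed for every datum in $L^{2}(\Omega,\mathcal F_{s},\P;H)$ (the standard theory the paper already invokes in Lemmas \ref{pmes}--\ref{pe}), you feed the limit $X(s)$ back in as an initial condition, combine the semiflow identity $X(t,-n,0)=X(t,s,X(s,-n,0))$ with the contraction \eqref{ANSCP} to get $X(t)=X(t,s,X(s))$, and conclude that $X$ on each $[s,\infty)$ is literally a Cauchy solution. This is shorter and more elementary, at the price of leaning on the external well-posedness theorem at the limiting datum, whereas the paper's weak-convergence argument re-derives the solution property directly from (H1)--(H4); both are legitimate here. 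The remaining steps ($L^{2}$-bound from \eqref{ineq2}, uniqueness of the bounded solution and of $\widehat{\mu}$ from \eqref{ANSCP} by coupling at a time $s\to-\infty$, flow property) agree in substance with the paper, which uses Chapman--Kolmogorov plus the Feller property where you use $X(t)=X(t,s,X(s))$, and $d_{BL}$ where you use $\mathcal W_{2}$. One small correction: for $\lambda'>0$ the bound in \eqref{ANSCP} is a minimum, not a product $C(t-s)\,\mathcal W_{2}(\mu_{0},\nu_{0})^{2}$; this only helps, since the decay term $\left\{\lambda'(r-2)(t-s)\right\}^{-\frac{2}{r-2}}$ is independent of the initial laws, so property (i) is not even needed in that case.
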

\begin{proof}
For any fixed interval $[a,b]\subset \mathbb{R}$, we denote
$$J:=L^{2}([a,b]\times\Omega,\d t\otimes\P;L_{2}(U,H)),
\quad K_{i}:=L^{\alpha_{i}}([a,b]\times\Omega,\d t\otimes\P;V_{i}),$$
$$K_{i}^{*}:=L^{\frac{\alpha_{i}}{\alpha_{i}-1}}([a,b]\times\Omega,\d t\otimes\P;V_{i}^{*}),~i=1,2.$$
According to the reflexivity of $K_{i}$, $i=1,2$, \eqref{ANSCP} and
\eqref{peq}, we may assume, going if necessary to a subsequence, that
\begin{enumerate}
\item[(1)] $X(\cdot,-n,0)\rightarrow X(\cdot)$ in $L^{2}([a,b]\times\Omega,\d t\otimes \P;H)$ and
      $X(\cdot,-n,0)\rightarrow X(\cdot) $ weakly in $K_{1}$ and $K_{2}$;
\item[(2)] $A_{i}(\cdot,X(\cdot,-n,0))\rightarrow Y_{i}(\cdot)$ weakly in $K_{i}^{*}$, $i=1,2$;
\item[(3)] $G(\cdot,X(\cdot,-n,0))\rightarrow Z(\cdot)$ weakly in $J$ and hence
      $$\int^{t}_{a}G(\sigma,X(\sigma,-n,0))\d W(\sigma)\rightarrow\int^{t}_{a}Z(\sigma)\d W(\sigma)$$
      weakly* in $L^{\infty}([a,b],\d t;L^{2}(\Omega,\P;H))$.
\end{enumerate}

Thus for all $v\in V$, $\varphi\in L^{\infty}([a,b]\times\Omega)$ by Fubini's theorem we get
\begin{align*}
&
E\int^{b}_{a}~_{V^{*}}\langle X(t),\varphi(t)v\rangle_{V}\d t\\
&
=\lim_{n\rightarrow\infty}E\int^{b}_{a}~_{V^{*}}
\langle X(t,-n,0),\varphi(t)v\rangle_{V}\d t \\
&
=\lim_{n\rightarrow\infty}E\int^{b}_{a}~_{V^{*}}
\langle X(a,-n,0)+\int_{a}^{t}A(\sigma,X(\sigma,-n,0))\d\sigma,
\varphi(t)v\rangle_{V}\d t\\
& \quad
+ \lim_{n\rightarrow\infty}E\left(\int^{b}_{a}\langle
\int^{t}_{a}B(\sigma,X(\sigma,-n,0))
\d W(\sigma),\varphi(t)v\rangle_{H}\d t\right)\\
&
=E\int^{b}_{a}~_{V^{*}}\langle X(a)+\int_{a}^{t}\left(Y_{1}(\sigma)
+Y_{2}(\sigma)\right)\d\sigma,\varphi(t)v\rangle_{V}\d t\\
& \quad
+E\left(\int^{b}_{a}\langle \int^{t}_{a}Z(\sigma)
\d W(\sigma),\varphi(t)v\rangle_{H}\d t\right).
\end{align*}
Let $Y(\sigma):=Y_{1}(\sigma)+Y_{2}(\sigma)\in W\subset V^{*}$, we have
$$X(t)=X(a)+\int^{t}_{a}Y(\sigma)\d\sigma+\int^{t}_{a}Z(\sigma)\d W(\sigma),
\quad \d t\otimes \P\rm{\mbox{-}a.e.}$$
Thus, it remains to verify that
$$Y=A(\cdot,X),\quad Z=G(\cdot,X),\quad
\d t\otimes \P\rm{\mbox{-}a.e.}$$
To this end, for any $\phi\in K_{1}\cap K_{2}\cap L^{2}([a,b]\times\Omega,\d t\otimes \P;H)$, we have
\begin{align}\label{bseq}
&
E\|X(t,-n,0)\|^{2}_{H}-E\|X(a,-n,0)\|^{2}_{H} \\\nonumber
&
=E\int^{t}_{a}\left(2_{V^{*}}\langle A(\sigma,X(\sigma,-n,0)),
X(\sigma,-n,0)\rangle
+\|G(\sigma,X(\sigma,-n,0))\|^{2}_{L_{2}(U,H)}\right)\d\sigma\\\nonumber
&
\leq E\int^{t}_{a}\Big[2_{V^{*}}\langle
A(\sigma,X(\sigma,-n,0))-A(\sigma,\phi(\sigma)),
X(\sigma,-n,0)-\phi(\sigma)\rangle_{V}\\\nonumber
&\qquad
+\|G(\sigma,X(\sigma,-n,0))-G(\sigma,\phi(\sigma))\|^{2}_{L_{2}(U,H)}
+2_{V^{*}}\langle A(\sigma,\phi(\sigma)),X(\sigma,-n,0)
\rangle_{V}\\\nonumber
& \qquad
+2_{V^{*}}\langle A(\sigma,X(\sigma,-n,0))
-A(\sigma,\phi(\sigma)),\phi(\sigma)\rangle_{V}\\\nonumber
& \qquad
+2\langle G(\sigma,X(\sigma,-n,0)),G(\sigma,\phi(\sigma))
\rangle_{L_{2}(U,H)}-\|G(\sigma,\phi(\sigma))\|_{L_{2}(U,H)}^{2}
\Big]\d\sigma.
\end{align}

For every nonnegative $\psi\in L^{\infty}([a,b],\d t;\mathbb{R})$,
first multiplying $\psi(t)$ on both sides of \eqref{bseq}, then
integrating with respect to $t$ from $a$ to $b$ and letting
$n\rightarrow\infty$, it follows from (H2) and
$2\lambda-L_{G}^{2}>0$ that
\begin{align}\label{Bountheq2}
&
E\int^{b}_{a}\psi(t)\left(\|X(t)\|^{2}_{H}-\|X(a)\|^{2}_{H}
\right)\d t \\\nonumber
&
\leq E\Bigg(\int^{b}_{a}\psi(t)\int^{t}_{a}\Big(2_{V_{1}^{*}}
\langle Y_{1}(\sigma)-A_{1}(\phi(\sigma)),\phi(\sigma)
\rangle_{V_{1}}+2_{V_{1}^{*}}\langle A_{1}(\phi(\sigma)),
X(\sigma)\rangle_{V_{1}}\\\nonumber
& \qquad
+2_{V_{2}^{*}}\langle Y_{2}(\sigma)-A_{2}(\sigma,\phi(\sigma)),
\phi(\sigma)\rangle_{V_{2}}+2_{V_{2}^{*}}\langle
A_{2}(\sigma,\phi(\sigma)),X(\sigma)\rangle_{V_{2}}\\\nonumber
& \qquad
+2\langle Z(\sigma),G(\sigma,\phi(\sigma))\rangle_{L_{2}(U,H)}
-\|G(\sigma,\phi(\sigma))\|^{2}_{L_{2}(U,H)}\Big)\d\sigma\d t\Bigg).
\end{align}
Applying It\^o's formula to $\|X(t)\|_{H}^{2}-\|X(a)\|_{H}^{2}$
in \eqref{Bountheq2}, we get
\begin{align}\label{ineq3}
0\geq
&
E\Bigg(\int^{b}_{a}\psi(t)\int^{t}_{a}\Big(2_{V^{*}}
\langle Y(\sigma)-A(\sigma,\phi(\sigma)),
X(\sigma)-\phi(\sigma)\rangle_{V}\\\nonumber
&\qquad
+\|G(\sigma,\phi(\sigma))-Z(\sigma)\|_{L_{2}(U,H)}^{2}\Big)
\d\sigma \d t\Bigg).
\end{align}
Taking $\phi=X$ in \eqref{ineq3},  we have
$Z=G(\cdot,X)$, $\d t\otimes \P$-a.e. Then, applying \eqref{ineq3} to
$\phi=X-\epsilon\widetilde{\phi}v$ for $\epsilon>0$ and
$\widetilde{\phi}\in L^{\infty}([a,b]\times\Omega,\d t\otimes\P;
\mathbb{R})$, $v\in V$,
we have
\begin{equation*}
E\Bigg(\int^{b}_{a}\psi(t)\int^{t}_{a}
2_{V^{*}}\langle Y(\sigma)-A(\sigma,X(\sigma)
-\epsilon\widetilde{\phi}(\sigma)v),
\epsilon\widetilde{\phi}(\sigma)v\rangle_{V}\d\sigma\d t\Bigg)\leq 0.
\end{equation*}
Dividing both sides by $\epsilon$ and letting $\epsilon\rightarrow0$,
according to Lebesgue's dominated convergence theorem, (H1) and (H4), we obtain
\begin{equation*}
E\left(\int^{b}_{a}\psi(t)\int^{t}_{a}\widetilde{\phi}(\sigma)
_{V^{*}}\langle Y(\sigma)-A(\sigma,X(\sigma)),
v\rangle_{V}\d\sigma\d t\right) \leq 0.
\end{equation*}
In view of the arbitrariness of $\psi$, $\widetilde{\phi}$ and $v$,
we conclude that
$Y=A(\cdot,X)$, $\d t\otimes \P$-a.e. This completes the existence proof, i.e.
$$
X(t)=X(a)+\int^{t}_{a}A(\sigma,X(\sigma))\d\sigma
+\int^{t}_{a}G(\sigma,X(\sigma))\d W(\sigma),\quad
\d t\otimes \P\rm{\mbox{-}a.e.}
$$
By the arbitrariness of interval $[a,b]\subset \mathbb{R}$, we
conclude that $X(\cdot)$ is a solution on $\mathbb{R}$.
It follows from \eqref{ineq2} that $\sup\limits_{t\in\mathbb{R}}E\|X(t)\|_{H}^{2}<\infty$.
The uniqueness of $L^{2}$-bounded solution is a consequence of
\eqref{ANSCP}.

The goal next is to prove that $\widehat{\mu}$ is unique with the properties (i) and (ii).
Note that
$$\sup_{t\in\mathbb{R}}\int_{H}\|x\|^{2}_{H}\widehat{\mu}(t)(\d x)=\sup_{t\in\mathbb{R}}E\|X(t)\|^{2}_{H}<\infty.$$
In view of the Chapman-Kolmogorov equation, we have
$\mu(t,s,\L(X(s,-n,0)))=\L(X(t,-n,0))$.
Then according to the Feller property, we get
$$\mu(t,s,\widehat{\mu}(s))=\widehat{\mu}(t).$$
Suppose that $\mu_{1}$ and $\mu_{2}$ satisfy properties (i) and (ii),
let $\zeta_{n,1}$ and $\zeta_{n,2}$ be random variables with the distributions $\mu_{1}(-n)$ and $\mu_{2}(-n)$ respectively.
Then consider the solutions $X(t,-n,\zeta_{n,1})$ and $X(t,-n,\zeta_{n,2})$ on $[-n,\infty)$,
we have
\begin{align*}
d_{BL}(\mu_{1}(t),\mu_{2}(t))
&
=\sup_{\|f\|_{BL}\leq1} \left|\int_{H}f(x)\d
\left(\mu(t,-n,\mu_{1}(-n))-\mu(t,-n,\mu_{2}(-n))\right)\right| \\
&
\leq \left(E\|X(t,-n,\zeta_{n,1})-X(t,-n,\zeta_{n,2})\|^{2}_{H}
\right)^{1/2}.
\end{align*}
Thus \eqref{ANSCP} yields that $\mu_{1}(t)=\mu_{2}(t)$
for all $t\in\R$.
\end{proof}

\begin{remark}\rm
Note that we call $X(t),t\in\R$ a solution to \eqref{eqSPDE1}
if for any $[s,r]\subset\R$, $X(t),t\in[s,r]$ is a solution to \eqref{eqSPDE1}.
Here we cannot obtain the existence and uniqueness of solutions to \eqref{eqSPDE1}
for $t\in\R$ for any given initial data because backward orbits through the
initial data are not necessarily unique.
But in Theorem \ref{Boundedth}, we prove that there exists a unique $L^2$-bounded solution
$X(t),t\in\R$ by the pullback attraction method.
And we will also show that this bounded solution $X$ is globally asymptotically stable
in square-mean sense below (see Theorem \ref{gasms}).
Therefore, if $Y(t),t\in\R$ is another solution to \eqref{eqSPDE1} and there exists $s\in\R$ such that $E\|Y(s)\|_H^2<\infty$, then we have
\begin{equation*}
\sup_{t\geq s}E\|Y(t)\|_H^2<\infty.
\end{equation*}
But on the other hand, we necessarily have
\begin{equation*}
\limsup_{t\rightarrow-\infty}E\|Y(t)\|_H^2=+\infty.
\end{equation*}
Indeed, if this is false, then $Y$ is also an $L^2$-bounded solution to \eqref{eqSPDE1}, which contradicts
the uniqueness of $L^2$-bounded solution.
\end{remark}

\begin{definition}[See \cite{FL}]\rm
We say that a solution $X(\cdot)$ of equation \eqref{eqSPDE1} is
{\em stable in square-mean sense}, if for each $\epsilon>0$, there
exists $\delta>0$ such that for all $t\geq0$
\begin{equation*}
  E\|X(t,0,\zeta_{0})-X(t)\|_{H}^{2}<\epsilon,
\end{equation*}
whenever $E\|\zeta_{0}-X(0)\|_{H}^{2}<\delta$. The solution
$X(\cdot)$ is said to be {\em asymptotically stable in
square-mean sense} if it is stable in square-mean sense and
\begin{equation}\label{stab}
  \lim_{t\rightarrow\infty}E\|X(t,0,\zeta_{0})-X(t)\|_{H}^{2}=0.
\end{equation}
We say $X(\cdot)$ is {\em globally asymptotically stable
in square-mean sense} provided  \eqref{stab} holds
for any $\zeta_{0}\in L^{2}(\Omega,\mathcal{F}_{0},\P;H)$.
\end{definition}

Applying Lemma \ref{ivcont} we obtain the following result:
\begin{theorem}\label{gasms}
Consider equation \eqref{eqSPDE1}. Suppose that
$2\lambda-L_{G}^{2}\geq0$ and {\rm{(H1)}}--{\rm{(H4)}} hold.
If $\lambda'>0$ or $2\lambda>L_{G}^{2}$,
then the unique $L^{2}$-bounded solution of
equation \eqref{eqSPDE1} is globally asymptotically stable in
square-mean sense. Moreover,
\begin{equation}\label{gaseq}
    E\|X(t,s,\zeta_{s})-X(t)\|^{2}_{H}
   \leq \begin{cases}
    E\|\zeta_{s}-X(s)\|_{H}^{2}\wedge \left\{\lambda'(r-2)
   (t-s)\right\}^{-\frac{2}{r-2}}, &\text{if $\lambda'>0$}\\
	{\rm{e}}^{-(2\lambda-L_{G}^{2})(t-s)}E\|\zeta_{s}-X(s)\|_{H}^{2},
   &\text{if $2\lambda>L_{G}^{2}$}
  \end{cases}
\end{equation}
for any $t\geq s$ and
$\zeta_{s}\in L^{2}(\Omega,\mathcal{F}_{s},\P;H)$.
\end{theorem}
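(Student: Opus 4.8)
The plan is to derive the quantitative estimate \eqref{gaseq} as an immediate corollary of Lemma \ref{ivcont}, and then read off both stability and global asymptotic stability from it. First I would invoke Theorem \ref{Boundedth} to fix the unique $L^{2}$-bounded solution $X(t)$, $t\in\R$, and observe that by uniqueness of the Cauchy problem one has $X(t)=X(t,s,X(s))$ for every $t\ge s$; that is, the bounded solution coincides with the solution issued from its own value $X(s)$ at time $s$. With this identification in hand, I would apply Lemma \ref{ivcont} to the pair of solutions $X(\cdot,s,\zeta_{s})$ and $X(\cdot)=X(\cdot,s,X(s))$, whose difference at time $s$ is $\zeta_{s}-X(s)$. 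The two alternatives of \eqref{ANSCP} then become precisely the two alternatives of \eqref{gaseq}, establishing the displayed bound for all $t\ge s$ and all $\zeta_{s}\in L^{2}(\Omega,\mathcal{F}_{s},\P;H)$.

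Next I would verify stability in square-mean sense from \eqref{gaseq} with $s=0$. Given $\epsilon>0$, I would simply set $\delta=\epsilon$. In the case $2\lambda>L_{G}^{2}$ the prefactor ${\rm e}^{-(2\lambda-L_{G}^{2})t}\le 1$ for every $t\ge 0$, so $E\|X(t,0,\zeta_{0})-X(t)\|_{H}^{2}\le E\|\zeta_{0}-X(0)\|_{H}^{2}<\delta=\epsilon$; in the case $\lambda'>0$ the right-hand side of \eqref{gaseq} is the minimum of $E\|\zeta_{0}-X(0)\|_{H}^{2}$ and a second term, hence is again dominated by $E\|\zeta_{0}-X(0)\|_{H}^{2}<\epsilon$. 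Thus the hypothesis $E\|\zeta_{0}-X(0)\|_{H}^{2}<\delta$ forces the required smallness uniformly for all $t\ge 0$.

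Finally I would establish global asymptotic stability by letting $t\to\infty$ in \eqref{gaseq}, now for an arbitrary $\zeta_{0}\in L^{2}(\Omega,\mathcal{F}_{0},\P;H)$. When $\lambda'>0$ the bounding term $\{\lambda'(r-2)t\}^{-\frac{2}{r-2}}\to 0$ because $r>2$ by (H2), so the left-hand side tends to $0$; when $2\lambda>L_{G}^{2}$ the exponential factor drives $E\|X(t,0,\zeta_{0})-X(t)\|_{H}^{2}$ to $0$. Either way \eqref{stab} holds for every initial datum, which is exactly global asymptotic stability in square-mean sense. Since Lemma \ref{ivcont} already carries out the underlying It\^o-formula and comparison-theorem computations, no genuine obstacle remains; the only point demanding care is the identification $X(t)=X(t,s,X(s))$, which rests on the uniqueness assertion of Theorem \ref{Boundedth} and is precisely what permits Lemma \ref{ivcont} to be applied to the bounded solution itself.
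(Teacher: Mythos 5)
Your proposal is correct and follows exactly the paper's route: the paper derives Theorem \ref{gasms} directly from Lemma \ref{ivcont} (applied to the bounded solution $X(\cdot)=X(\cdot,s,X(s))$ and the solution $X(\cdot,s,\zeta_{s})$), which is precisely your argument with the stability and $t\to\infty$ consequences spelled out. No gaps.
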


\begin{remark}\rm\label{hulllem}
\begin{enumerate}
\item If $A_{2}$ and $G$ satisfy (H2) and (H3), then every pair of
    functions $\left(\tilde{A_{2}},\tilde{G}\right)\in H(A_{2},G)$
    possess the same property with the same constants, where
      \[
      H(A_{2},G):=\overline{\left\{\left(A_{2}^{\tau},G^{\tau}\right):
      \tau\in\R\right\}}.
      \]
      Here $\overline{\left\{\left(A_{2}^{\tau},G^{\tau}\right):
      \tau\in\R\right\}}$ means the closure of $\left\{\left(
      A_{2}^{\tau},G^{\tau}\right):\tau\in\R\right\}$.
\item If $A_{2}$ and $G$ satisfy the conditions (H1), (H2), (H4)
     and (H5), then $A_{2}\in BUC(\R\times V, V_{2}^{*})$,
     $G\in BUC(\R\times V,L_{2}(U,H))$ and $H(A_{2},G)\subset
     BUC(\R\times V, V_{2}^{*})\times BUC(\R\times V,L_{2}(U,H))$.
\end{enumerate}
\end{remark}

\begin{definition}\label{compatible} \rm
Let $\{\varphi (t)\}_{t\in\mathbb R}$ be a solution of
equation \eqref{eqSPDE1}. Then $\varphi$ is called {\em
compatible} (respectively, {\em strongly compatible}) {\em in
distribution} if the following conditions are fulfilled:
\begin{enumerate}
\item
there exists a bounded closed subset
$\mathcal Q\subset L^{2}(\Omega,\mathbb P;H$) such that
$\varphi(\mathbb R)\subseteq \mathcal Q$;
\item
$\mathfrak N_{(F,G)}\subseteq \tilde{\mathfrak N}_{\varphi}$
(respectively,
$\mathfrak M_{(F,G)}\subseteq \tilde{\mathfrak M}_{\varphi}$),
where $\tilde{\mathfrak N}_{\varphi}$
(respectively, $\tilde{\mathfrak M}_{\varphi}$) means the set of
all sequences $\{t_n\}\subset\mathbb R$ such that the sequence
$\{\varphi(\cdot+t_n)\}$ converges to $\varphi(\cdot)$
(respectively, $\{\varphi(\cdot+t_n)\}$ converges) in distribution
uniformly on any compact interval.
\end{enumerate}
\end{definition}

Now we show that the $L^{2}$-bounded solution $X(t),t\in\R$
for equation \eqref{eqSPDE1} is strongly compatible in distribution.
To this end, we need the tightness of the family of distributions
$\{\P\circ[X(t)]^{-1}\}_{t\in\mathbb{R}}$. Therefore,
we need the following condition (H6) which is used by many
works (see, e.g. \cite{Liu2010}).

$\textbf{(H6)}$ Assume that there exists a closed subset $S\subset H$
equipped with the norm $\|\cdot\|_{S}$ such that $V\subset S$
is continuous and $S\subset H$ is compact.
Let $T_{n}$ be a sequence of positive definite self-adjoint
operators on $H$ such that for each $n\geq1$,
$$\langle x,y\rangle_{n}:=\langle x,T_{n}y\rangle_{H},
\quad x,y\in H,$$
defines a new inner product on $H$. Assume further that
the norms $\|\cdot\|_{n}$ generated by $\langle ~,~\rangle_{n}$
are all equivalent to $\|\cdot\|_{H}$ and for all $x\in S$ we have
$$\|x\|_{n} \uparrow \|x\|_{S} \quad {\rm{as}}~n\rightarrow\infty.$$
Furthermore, we suppose that for each $n\geq1$,
$T_{n}: V\rightarrow V$ is continuous and there exist constants
$c_{4}>0$, $M_{0}>0$ such that for all $v\in V$, $t\in\mathbb{R}$
\begin{equation*}
  2_{V^{*}}\langle A(t,v),T_{n}v\rangle_{V}
  +\|G(t,v)\|^{2}_{L_{2}(U,H_{n})}\leq-c_{4}\|v\|_{n}^{2}+M_{0}.
\end{equation*}

\begin{prop}\label{tightprop}
Consider equation \eqref{eqSPDE1}.
Suppose that conditions of Theorem \ref{gasms} hold.
If {\rm{(H6)}} hold
then the $L^{2}$-bounded solution $X(\cdot)$ satisfies
\begin{equation}\label{compeq}
\sup_{t\in\mathbb{R}}E\|X(t)\|_{S}^{2}<\infty.
\end{equation}

In particular, the family of distributions
$\{{P\circ[X(t)]^{-1}}\}_{t\in\mathbb{R}}$ is tight.
\end{prop}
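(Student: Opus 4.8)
The plan is to establish the a priori bound \eqref{compeq} by applying It\^o's formula to the equivalent norm $\|\cdot\|_{n}^{2}=\langle\,\cdot\,,T_{n}\,\cdot\,\rangle_{H}$ and exploiting the dissipativity encoded in {\rm(H6)}, and then to pass to the $S$-norm via the monotone convergence theorem; tightness will follow from the compactness of the embedding $S\subset H$ together with Chebyshev's inequality.

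First I would fix $n\geq1$ and regard $\|\cdot\|_{n}$ as a Hilbert norm on $H$ which is equivalent to $\|\cdot\|_{H}$. Since $T_{n}:V\rightarrow V$ is continuous, the Gelfand triple $V\subset H_{n}\subset V^{*}$ is admissible, so the variational It\^o formula applies to $\|X(t,s,\zeta_{s})\|_{n}^{2}$ for the Cauchy problem with $X(s,s,\zeta_{s})=\zeta_{s}$. Taking expectations (the stochastic integral being a martingale after localization) and noting that the It\^o correction term equals $\|G\|_{L_{2}(U,H_{n})}^{2}$, I obtain
\begin{align*}
&E\big({\rm e}^{c_{4}(t-s)}\|X(t,s,\zeta_{s})\|_{n}^{2}\big)
=E\|\zeta_{s}\|_{n}^{2}\\
&\quad+E\int_{s}^{t}{\rm e}^{c_{4}(\sigma-s)}
\Big(c_{4}\|X(\sigma,s,\zeta_{s})\|_{n}^{2}
+2_{V^{*}}\langle A(\sigma,X(\sigma,s,\zeta_{s})),
T_{n}X(\sigma,s,\zeta_{s})\rangle_{V}\\
&\qquad+\|G(\sigma,X(\sigma,s,\zeta_{s}))\|_{L_{2}(U,H_{n})}^{2}\Big)\d\sigma.
\end{align*}
The key inequality in {\rm(H6)} bounds the integrand by ${\rm e}^{c_{4}(\sigma-s)}M_{0}$, whence
\[
E\|X(t,s,\zeta_{s})\|_{n}^{2}
\leq{\rm e}^{-c_{4}(t-s)}E\|\zeta_{s}\|_{n}^{2}+\frac{M_{0}}{c_{4}}.
\]
Choosing $\zeta_{s}=0$ and $s=-m$ gives $E\|X(t,-m,0)\|_{n}^{2}\leq M_{0}/c_{4}$ for all $n\geq1$ and all $m\geq-t$, with the bound independent of both $n$ and $t$.

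Next I would let $m\rightarrow\infty$. By \eqref{L2lim2} we have $X(t,-m,0)\rightarrow X(t)$ in $L^{2}(\Omega,\P;H)$, and since $\|\cdot\|_{n}$ is equivalent to $\|\cdot\|_{H}$ this convergence also holds in the $n$-norm for each fixed $n$; hence $E\|X(t)\|_{n}^{2}\leq M_{0}/c_{4}$ for every $n$ and every $t\in\R$. Because $\|x\|_{n}^{2}=\langle T_{n}x,x\rangle_{H}$ increases to $\|x\|_{S}^{2}$ (with the convention $\|x\|_{S}=\infty$ for $x\notin S$), the monotone convergence theorem yields $E\|X(t)\|_{S}^{2}=\lim_{n\rightarrow\infty}E\|X(t)\|_{n}^{2}\leq M_{0}/c_{4}$, uniformly in $t\in\R$; in particular $X(t)\in S$ almost surely. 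This proves \eqref{compeq}. For tightness, fix $R>0$ and set $K_{R}:=\{x\in S:\|x\|_{S}\leq R\}$, which is compact in $H$ since $S\subset H$ is a compact embedding and $K_{R}$ is closed and bounded in $S$. Chebyshev's inequality and \eqref{compeq} give $\P(X(t)\notin K_{R})=\P(\|X(t)\|_{S}>R)\leq E\|X(t)\|_{S}^{2}/R^{2}\leq M_{0}/(c_{4}R^{2})$ uniformly in $t$, so letting $R\rightarrow\infty$ shows that $\{\P\circ[X(t)]^{-1}\}_{t\in\R}$ is tight.

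The main obstacle is the uniform-in-$n$-and-$t$ estimate on $E\|X(t)\|_{n}^{2}$: once the dissipativity of {\rm(H6)} is inserted into the It\^o expansion in the $n$-norm the bound is routine, but the delicate points are justifying the It\^o formula for $\langle X,T_{n}X\rangle_{H}$ in the variational framework (which is precisely why {\rm(H6)} demands $T_{n}:V\rightarrow V$ continuous and $\|\cdot\|_{n}$ equivalent to $\|\cdot\|_{H}$) and correctly interchanging the $L^{2}$-limit $m\rightarrow\infty$ with the monotone limit $n\rightarrow\infty$.
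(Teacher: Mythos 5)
Your proof is correct and takes essentially the same route as the paper: the paper's own proof is a one-line reference to Proposition 1 of \cite{CML2020}, invoking exactly the ingredients you use --- It\^o's formula applied to the equivalent norms $\|\cdot\|_{n}$, the dissipativity inequality in (H6), and Gronwall's lemma --- followed by the monotone limit in $n$ and the Chebyshev/compact-embedding argument for tightness. Your write-up simply supplies the details the paper delegates to that reference, including making explicit the standard convention $\|x\|_{S}=+\infty$ for $x\notin S$ under which the monotone convergence step (and hence $X(t)\in S$ a.s.) is justified.
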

\begin{proof}
Similar to the proof of Proposition 1 in \cite{CML2020},
\eqref{compeq} can be obtained by It\^o's formula, (H6)
and Gronwall's lemma.
\end{proof}

The following lemma is a direct corollary of
Theorem 3.1 in \cite{CML2020}.
\begin{lemma}\label{conlemma}
Suppose that $A_{n}$, $A$, $G_{n}$, $G$ satisfy
{\rm{(H1)--(H4)}} with the same constants
$c$, $c_{1}$, $c_{2}$, $c_{3}$, $c_{2}'$, $c_{3}'$,
$M_{0}$, $\alpha_{i}$, $i=1,2$ and $L_{G}$.
Let $X_{n}$ be the solution of the Cauchy problem
\begin{equation}\label{system1}
  \left\{
   \begin{aligned}
   &\ \d X(t)=A_{n}(t,X(t))\d t+G_{n}(t,X(t))\d W(t)\\
  &\ X(s)=\zeta_{n}^{s}
   \end{aligned}
   \right.
  \end{equation}
and $X$ be the solution to the Cauchy problem
\begin{equation}\label{system2}
  \left\{
   \begin{aligned}
   &\ \d X(t)=A(t,X(t))\d t+G(t,X(t))\d W(t)\\
  &\ X(s)=\zeta^{s}.
   \end{aligned}
   \right.
  \end{equation}
Assume further that
\begin{enumerate}
  \item[(1)] $\lim\limits_{n\rightarrow\infty}A_{i,n}(t,x)
  =A_{i}(t,x)~in~V_{i}^{*}~for~all~t\in\mathbb{R},~x\in V,~i=1,2$;
  \item[(2)] $\lim\limits_{n\rightarrow\infty}G_{n}(t,x)
  =G(t,x)~in~L_{2}(U,H)~for~all~t\in\mathbb{R},~x\in V$.
\end{enumerate}
Then we have the following conclusions:
\begin{enumerate}
  \item If $\lim\limits_{n\rightarrow\infty}
  E\|\zeta_{n}^{s}-\zeta^{s}\|^{2}_{H}=0$,
  then $\lim\limits_{n\rightarrow\infty}E\sup\limits_{s\leq\tau\leq t}
  \|X_{n}(\tau)-X(\tau)\|^{2}_{H}=0$ for any $t>s$;
  \item If $\lim\limits_{n\rightarrow\infty}\zeta_{n}^{s}
  =\zeta^{s}$ in probability,
  then $\lim\limits_{n\rightarrow\infty}\sup\limits_{\tau\in[s,t]}
  \|X_{n}(\tau)-X(\tau)\|_{H}=0$ in probability;
  \item If $\lim\limits_{n\rightarrow\infty}
  d_{BL}(\mathcal{L}(\zeta_{n}^{s}),\mathcal{L}(\zeta^{s}))=0$
  in $Pr(H)$, then
  $$\lim\limits_{n\rightarrow\infty}d_{BL}(\mathcal{L}(X_{n}),
  \mathcal{L}(X))=0 \quad {\rm{in}} ~Pr(C([s,\infty),H)).$$
\end{enumerate}
\end{lemma}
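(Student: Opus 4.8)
The plan is to take conclusion (i) as the analytic heart of the statement and to derive (ii) and (iii) from it by soft arguments. Throughout, write $Z_n:=X_n-X$ for the difference of the two solutions and apply It\^o's formula to $\|Z_n(t)\|_H^2$, proceeding exactly as in the proof of Lemma \ref{pe}. The drift contribution of the It\^o expansion is $2\,{}_{V^*}\langle A_n(\sigma,X_n)-A(\sigma,X),Z_n\rangle_V$, and the decisive device is to split it as
\[
2\,{}_{V^*}\langle A_n(\sigma,X_n)-A_n(\sigma,X),Z_n\rangle_V
+2\,{}_{V^*}\langle A_n(\sigma,X)-A(\sigma,X),Z_n\rangle_V,
\]
so that in the second (error) term the coefficient difference is evaluated at the \emph{fixed} limiting solution $X$ rather than at the moving argument $X_n$.

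Since $A_n$ obeys (H2) with the same constants, the first term is $\le -2\lambda\|Z_n\|_H^2-2\lambda'\|Z_n\|_H^r\le 0$. Treating the martingale part as in Lemma \ref{pe}, and using the Lipschitz bound on $G_n$ from (H2) together with the elementary inequality $\|G_n(\sigma,X_n)-G(\sigma,X)\|^2\le(1+\epsilon)L_G^2\|Z_n\|_H^2+(1+\epsilon^{-1})\|G_n(\sigma,X)-G(\sigma,X)\|^2$, one arrives at
\[
E\|Z_n(t)\|_H^2\le E\|Z_n(s)\|_H^2+(1+\epsilon)L_G^2\int_s^t E\|Z_n(\sigma)\|_H^2\,\d\sigma+R_n,
\]
where the remainder $R_n$ collects the two terms built from $A_n(\cdot,X)-A(\cdot,X)$ and $G_n(\cdot,X)-G(\cdot,X)$.

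The step I expect to be the main obstacle is showing $R_n\to0$, and this is exactly what the splitting above makes possible. For the drift part I would bound, via (H4) and H\"older in $(\sigma,\omega)$,
\[
E\int_s^t\|A_{i,n}(X)-A_i(X)\|_{V_i^*}\|Z_n\|_{V_i}\,\d\sigma
\le\Big(E\int_s^t\|A_{i,n}(X)-A_i(X)\|_{V_i^*}^{\alpha_i/(\alpha_i-1)}\,\d\sigma\Big)^{(\alpha_i-1)/\alpha_i}\Big(E\int_s^t\|Z_n\|_{V_i}^{\alpha_i}\,\d\sigma\Big)^{1/\alpha_i}.
\]
The second factor is bounded uniformly in $n$ by Lemma \ref{pe} (note $E\|\zeta_n^s\|_H^2$ is bounded, since $\zeta_n^s\to\zeta^s$ in $L^2$). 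In the first factor the integrand is dominated by the $n$-independent, $\d\sigma\otimes\P$-integrable majorant $C(\|X\|_{V_i}^{\alpha_i}+M_0^{\alpha_i/(\alpha_i-1)})$ coming from (H4) and Lemma \ref{pe}, while for each $(\sigma,\omega)$ hypothesis (1) gives $\|A_{i,n}(X(\sigma,\omega))-A_i(X(\sigma,\omega))\|_{V_i^*}\to0$ precisely because the argument $X(\sigma,\omega)\in V$ is now fixed; dominated convergence then forces the first factor to zero. The same domination-plus-pointwise-convergence argument, majorizing $\|G_n(\cdot,X)\|^2$ by $C(\|X\|_H^2+M_0^2)$ through (H2) and (H4), gives $E\int_s^t\|G_n(\sigma,X)-G(\sigma,X)\|_{L_2(U,H)}^2\,\d\sigma\to0$. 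Hence $R_n\to0$, and Gronwall's lemma yields $\sup_{\sigma\in[s,t]}E\|Z_n(\sigma)\|_H^2\to0$; reinserting the Burkholder--Davis--Gundy estimate for the martingale term as in Lemma \ref{pe} upgrades this to $E\sup_{s\le\tau\le t}\|Z_n(\tau)\|_H^2\to0$, which is (i).

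Finally, (ii) follows from (i) by a routine localization: convergence in probability of $\zeta_n^s$ lets me pass to subsequences along which $\zeta_n^s\to\zeta^s$ a.s., truncate the initial data at level $R$ to return to the $L^2$-setting of (i), and let $R\to\infty$ using the uniform a priori bounds of Lemma \ref{pe}; the usual subsequence characterization of convergence in probability then gives $\sup_{\tau\in[s,t]}\|X_n(\tau)-X(\tau)\|_H\to0$ in probability. For (iii) I would invoke the Skorokhod representation theorem: since $d_{BL}(\L(\zeta_n^s),\L(\zeta^s))\to0$ is weak convergence on $H$, pass to a common probability space carrying copies $\tilde\zeta_n^s\to\tilde\zeta^s$ a.s. together with a cylindrical Wiener process, realize the solutions of \eqref{system1}--\eqref{system2} there as measurable functionals of initial datum and driving noise (strong solutions, so that the laws are unchanged on the new space), apply (ii) to obtain convergence in probability in $C([s,\infty),H)$ with the compact-open topology, and read off $d_{BL}(\L(X_n),\L(X))\to0$ in $Pr(C([s,\infty),H))$.
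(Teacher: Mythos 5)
Your proposal is correct in substance, but it necessarily takes a different route from the paper, because the paper contains no proof of this lemma at all: it is dispatched with the single remark that it is a direct corollary of Theorem 3.1 in \cite{CML2020}, the authors' earlier work on monotone SPDEs, so all the analytic content is outsourced to that reference. Your argument reconstructs that content in a self-contained way, and the key devices are the right ones: splitting the drift error as $A_n(\sigma,X_n)-A_n(\sigma,X)$ plus $A_n(\sigma,X)-A(\sigma,X)$ so that the coefficient discrepancy is evaluated at the \emph{fixed} limit solution, killing the first piece by the uniform monotonicity (H2), and handling the second by H\"older with exponents $\alpha_i/(\alpha_i-1)$ and $\alpha_i$, an $n$-independent majorant supplied by (H4), and dominated convergence --- which is exactly what the pointwise-in-$(t,x)$ hypotheses (1)--(2) permit; the Gronwall-plus-Burkholder--Davis--Gundy upgrade to the supremum is standard, and Lemma \ref{pe} gives the uniform bound on $E\int_s^t\|Z_n\|_{V_i}^{\alpha_i}\,\d\sigma$ that your H\"older step needs. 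Two steps you label ``routine'' should be made explicit in a complete write-up. First, the truncation in (ii) relies on a localization property: if two $\mathcal F_s$-measurable initial data coincide on an event $A\in\mathcal F_s$, the corresponding solutions coincide on $A$ for all later times; this follows by applying It\^o's formula and Gronwall to $E\bigl[\mathbf 1_{A}\|X(\cdot,\zeta_1)-X(\cdot,\zeta_2)\|_{H}^{2}\bigr]$, using (H2), but it is not literally contained in (i). Second, the Skorokhod step in (iii) needs the observation that $\zeta_n^s$ is independent of the increments of $W$ after time $s$, so the pair (initial datum, driving noise) can be re-realized on the new space with the same joint law, together with the fact that the variational solution is a strong solution, i.e.\ a measurable functional of that pair, so that its law is unchanged. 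With these points spelled out, your argument is a valid, self-contained replacement for the citation: the paper's route buys brevity, while yours buys independence from \cite{CML2020} and makes visible exactly which hypotheses ((H2) for the contraction part, (H4) for the domination, (1)--(2) for the pointwise limits) drive the convergence.
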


\begin{theorem}\label{SCth}
Consider equation \eqref{eqSPDE1}. Suppose that
$2\lambda-L_{G}^{2}\geq0$ and {\rm{(H1)}}--{\rm{(H6)}} hold.
If $\lambda'>0$ or $2\lambda>L_{G}^{2}$, then the unique
$L^{2}$-bounded solution is strongly compatible in distribution.
\end{theorem}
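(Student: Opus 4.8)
The plan is to verify the two requirements of Definition \ref{compatible} for the unique $L^2$-bounded solution $X(\cdot)$, where the role of the time-dependent coefficient is played by the pair $(A_2,G)$. Condition (i) is immediate from Theorem \ref{Boundedth}: setting $M:=\sup_{t\in\R}E\|X(t)\|_H^2<\infty$, the orbit $X(\R)$ lies in the bounded closed ball $\mathcal Q:=\{\zeta\in L^2(\Omega,\P;H):E\|\zeta\|_H^2\le M\}$. For condition (ii) I would fix a sequence $\{t_n\}\in\mathfrak M_{(A_2,G)}$, so that $(A_2^{t_n},G^{t_n})\to(\widetilde A_2,\widetilde G)$ in $BUC(\R\times V,V_2^*)\times BUC(\R\times V,L_2(U,H))$. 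By Remark \ref{hulllem} the limit satisfies (H1)--(H5) with the \emph{same} constants, so Theorem \ref{Boundedth} gives a unique $L^2$-bounded solution $\widetilde X$ of the limit equation with drift $A_1+\widetilde A_2$ and diffusion $\widetilde G$. It then suffices to show $X(\cdot+t_n)\to\widetilde X$ in distribution uniformly on each compact interval $K=[a,b]$, which yields $\{t_n\}\in\widetilde{\mathfrak M}_X$ and hence $\mathfrak M_{(A_2,G)}\subseteq\widetilde{\mathfrak M}_X$.

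The starting observation is that the time shift, together with stationarity of the Wiener increments, turns $X_n:=X(\cdot+t_n)$ into a solution of the shifted equation: substituting $\sigma\mapsto\rho+t_n$ in the integral form of \eqref{eqSPDE1} shows that on $[s_0,\infty)$
\[
X_n(t)=X_n(s_0)+\int_{s_0}^{t}\big(A_1(X_n(\rho))+A_2^{t_n}(\rho,X_n(\rho))\big)\d\rho
+\int_{s_0}^{t}G^{t_n}(\rho,X_n(\rho))\,\d W_n(\rho),
\]
where $W_n(\cdot):=W(\cdot+t_n)-W(t_n)$ is again a cylindrical Wiener process with the same law as $W$. By uniqueness in law, the law of $X(\cdot+t_n)$ on $C(K,H)$ then coincides with that of $\bar X_n$, the solution of the shifted equation driven by $W$ with initial datum $\bar X_n(s_0)\stackrel{d}{=}X(s_0+t_n)$. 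I would next introduce auxiliary solutions issuing from the deterministic datum $0$ at a backward time $s_0<a$: let $\widehat X_n$ solve the shifted equation (coefficients $A_1,A_2^{t_n},G^{t_n}$) and $\widehat X$ the limit equation, both driven by $W$ with value $0$ at $s_0$. The triangle inequality for $d_{BL}$ on $Pr(C(K,H))$ gives
\begin{align*}
d_{BL}\big(\L(X(\cdot+t_n)|_K),\L(\widetilde X|_K)\big)
&\le d_{BL}\big(\L(\bar X_n|_K),\L(\widehat X_n|_K)\big)
+d_{BL}\big(\L(\widehat X_n|_K),\L(\widehat X|_K)\big)\\
&\quad+d_{BL}\big(\L(\widehat X|_K),\L(\widetilde X|_K)\big).
\end{align*}
The middle term tends to $0$ as $n\to\infty$ by Lemma \ref{conlemma}(3), since $A_2^{t_n}\to\widetilde A_2$ and $G^{t_n}\to\widetilde G$ pointwise (a consequence of $BUC$-convergence), the data coincide, and the constants are uniform by Remark \ref{hulllem}.

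For the two outer terms I would use the elementary bound $d_{BL}(\L(Y|_K),\L(Z|_K))\le(E\sup_{t\in K}\|Y(t)-Z(t)\|_H^2)^{1/2}$ (valid because $\|\cdot\|_{BL}\le1$ forces $1$-Lipschitz continuity for the sup-norm on $C(K,H)$), and I would establish the key $E\sup$-stability estimate for two solutions $Y,Z$ of one equation obeying (H1)--(H4) with $2\lambda\ge L_G^2$: applying It\^o's formula to $\|Y(t)-Z(t)\|_H^2$ on $[a,t]$, discarding the nonpositive monotone drift from (H2), and handling the stochastic integral by the Burkholder-Davis-Gundy and Young inequalities yields, on $K$,
\[
E\sup_{t\in K}\|Y(t)-Z(t)\|_H^2\le C_K\,E\|Y(a)-Z(a)\|_H^2
\le C_K\,{\rm e}^{-(2\lambda-L_G^2)(a-s_0)}E\|Y(s_0)-Z(s_0)\|_H^2,
\]
the last step using Theorem \ref{gasms} (with the polynomial analogue $\{\lambda'(r-2)(a-s_0)\}^{-2/(r-2)}$ when $\lambda'>0$). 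Since $E\|\bar X_n(s_0)\|_H^2=E\|X(s_0+t_n)\|_H^2\le M$ and $E\|\widetilde X(s_0)\|_H^2\le\sup_tE\|\widetilde X(t)\|_H^2<\infty$ are bounded uniformly in $n$, both outer terms tend to $0$ as $s_0\to-\infty$, uniformly in $n$. Thus, given $\varepsilon>0$, I would first fix $s_0$ so negative that the outer terms are below $\varepsilon$ for all $n$, and then let $n\to\infty$ to push the middle term below $\varepsilon$; this delivers the required convergence on $C(K,H)$.

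The hard part will be precisely this last estimate: Theorem \ref{gasms} controls only $\sup_tE\|\cdot\|_H^2$, whereas convergence in $Pr(C(K,H))$ demands control of $E\sup_{t\in K}\|\cdot\|_H^2$, so the crux is to upgrade the pointwise decay to an $E\sup$-bound that is in addition uniform in $n$ as $s_0\to-\infty$. A secondary point needing care is the law-identification of the shifted solution $X(\cdot+t_n)$ with a genuine solution $\bar X_n$ of the shifted equation driven by $W$, which is what makes the deterministic-data comparison through Lemma \ref{conlemma} legitimate.
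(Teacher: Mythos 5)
Your proposal is correct, but it takes a genuinely different route from the paper. The paper proves strong compatibility by a compactness/identification argument: it uses (H6) via Proposition \ref{tightprop} to get tightness of the marginals $\{\L(X_n(-r))\}_n$ of the shifted bounded solutions, extracts weak limits $\mu_r$ at each backward time $-r$, runs the limit equation from these data, patches the resulting laws together by a diagonal argument into a measure-valued map $\nu(t)$ enjoying $L^2$-boundedness and the flow property, and finally identifies $\nu=\L(\tilde X)$ through the uniqueness clause of Theorem \ref{Boundedth}. You instead exploit the quantitative square-mean contraction between two solutions of the same equation (Lemma \ref{ivcont}/Theorem \ref{gasms}, with constants uniform over the hull by Remark \ref{hulllem}): you introduce auxiliary solutions started from the deterministic datum $0$ at a remote time $s_0$, upgrade the pointwise contraction to an $E\sup$-bound on $K=[a,b]$ via It\^o's formula and Burkholder--Davis--Gundy (valid since the monotone drift contribution is nonpositive when $2\lambda\ge L_G^2$), treat the middle term by Lemma \ref{conlemma}(3) with equal data, and finish with an $\varepsilon/3$ argument (first fix $s_0$, then let $n\to\infty$). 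What your route buys: it never invokes (H6), Proposition \ref{tightprop} or any tightness, so it in fact proves the statement under (H1)--(H5) alone, and it is quantitative. What the paper's route buys: the same soft compactness scheme is reused almost verbatim for the second Bogolyubov theorem (Theorem \ref{averth}), where the coefficients $F_\varepsilon,G_\varepsilon$ do not converge pointwise to $\bar F,\bar G$ and Lemma \ref{conlemma} is unavailable as a "middle term" tool, so your decomposition would need the first Bogolyubov theorem (Theorem \ref{avethf}) in its place. Two small points to tighten in your write-up: (a) realizing $\bar X_n(s_0)$ as an $\mathcal F_{s_0}$-measurable random variable with law $\L(X(s_0+t_n))$ independent of the future noise increments presupposes a sufficiently rich probability space --- the same convention the paper adopts (e.g. for $\xi_r$); this can be bypassed altogether by driving the zero-data auxiliary solution with the shifted noise $W_n$ rather than $W$ and comparing it pathwise with $X(\cdot+t_n)$, so that only deterministic initial data ever need to be realized. (b) The limit pair $(\tilde A_2,\tilde G)$ should also be checked to satisfy (H1) and (H4), not only (H2)--(H3) covered by Remark \ref{hulllem}, before Theorem \ref{Boundedth} and Lemma \ref{conlemma} are applied to the limit equation; this follows from the locally uniform convergence, and the paper is equally terse on this point.
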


\begin{proof}
It follows from Remark \ref{hulllem} that
$H(A_{2},G)\subset BUC(\R\times V,V_{2}^{*})\times
BUC(\R\times V,L_{2}(U,H))$.
Let $\{t_{n}\}\in\mathfrak M_{(A_{2},G)}$, then there exists
$(\tilde{A_{2}},\tilde{G})\in H(A_{2},G)$ such that
$$
\lim_{n\rightarrow\infty}\sup_{|t|\leq l,\|x\|_{V}\leq r}
\|A_{2}(t+t_{n},x)-\tilde{A_{2}}(t,x)\|_{V_{2}^{*}}=0,
$$
$$
\lim_{n\rightarrow\infty}\sup_{|t|\leq l,\|x\|_{V}\leq r}
\|G(t+t_{n},x)-\tilde{G}(t,x)\|_{L_{2}(U,H)}=0,
$$
for any $l>0$ and $r>0$.
Let $X_{n}$ be the unique $L^{2}$-bounded solution of
\begin{equation*}
\d X(t)=\left(A_{1}(X(t))+A_{2}(t+t_{n},X(t))\right)\d t
+G(t+t_{n},X(t))\d W(t)
\end{equation*}
and $\tilde{X}$ be the unique $L^{2}$-bounded solution of
\begin{equation}\label{SCDLeq}
\d X(t)=\left(A_{1}(X(t))+\tilde{A_{2}}(t,X(t))\right)\d t
+\tilde{G}(t,X(t))\d W(t).
\end{equation}
We now prove that for any $[a,b]\subset\R$,
$\lim\limits_{n\rightarrow\infty}\sup\limits_{t\in[a,b]}
d_{BL}(\mathcal L(X_{n}(t)),\mathcal L(\tilde{X}(t)))=0$.
According to Lemma \ref{conlemma}, we only need to prove that
$\lim\limits_{n\rightarrow\infty}d_{BL}(\mathcal{L}(X_{n}(t)),
\mathcal{L}(\tilde{X}(t)))=0$
in $Pr(H)$ for every $t\in\R$. To this end, it suffices to
show that for every sequence
$\{\gamma_{k}'\}:=\{\gamma_{k}'\}_{k=1}^{\infty}\subset\N$,
there exists a subsequence $\{\gamma_{k}\}$ of $\{\gamma_{k}'\}$
such that
$\lim\limits_{k\rightarrow\infty}d_{BL}(\mathcal{L}(X_{\gamma_{k}}(t)),
\mathcal{L}(\tilde{X}(t)))=0$ in $Pr(H)$
for every $t\in\R$.

Given $r\geq1$, according to the tightness of
$\{\L(X_{\gamma_{k}'}(-r))\}$,
there exists a subsequence $\{\gamma_{k}\}\subset\{\gamma_{k}'\}$
such that $\L(X_{\gamma_{k}}(-r))$ converges weakly to some
probability measure $\mu_{r}$ in $Pr(H)$.
Let $\xi_{r}$ be a random variable with distribution $\mu_{r}$.
Define $Y_{r}(t):=X(t,-r,\xi_{r})$, where
$X(t,-r,\xi_{r})$, $t\in[-r,+\infty)$ is a solution to
the following Cauchy problem
\begin{equation*}
  \left\{
   \begin{aligned}
   &\ \d X(t)=\left(A_{1}(X(t))+\tilde{A_{2}}(t,X(t))\right)\d t
   +\tilde{G}(t,X(t))\d W(t)\\
  &\ X(-r)=\xi_{r}.
   \end{aligned}
   \right.
  \end{equation*}
In view of Lemma \ref{conlemma}, we have
$$\lim\limits_{k\rightarrow\infty}d_{BL}(\L(X_{\gamma_{k}}),\L(Y_{r}))=0
\quad{\rm {in}}~ Pr(C([-r,+\infty),H)).$$
Since $\{\L(X_{\gamma_{k}}(-r-1))\}$ is tight,
going if necessary to a subsequence, we can assume that
$\L(X_{\gamma_{k}}(-r-1))$ converges weakly to some  probability
measure $\mu_{r+1}$ in $Pr(H)$. Let $\xi_{r+1}$ be a random
variable with distribution $\mu_{r+1}$.
In light of Lemma \ref{conlemma}, we have
 $$\lim\limits_{k\rightarrow\infty}d_{BL}
 (\L(X_{\gamma_{k}}),\L(Y_{r+1}))=0
 \quad{\rm {in}} ~ Pr(C([-r-1,+\infty),H)),$$
where $Y_{r+1}(t):=X(t,-r-1,\xi_{r+1})$, $t\in[-r-1,+\infty)$.
Therefore, we have $d_{BL}(\L(Y_{r}),\L(Y_{r+1}))=0$
in $Pr(C([-r,+\infty),H))$. In particular,
$\L(Y_{r}(t))=\L(Y_{r+1}(t))$ for all $t\geq-r$.

Define $\nu(t):=\L(Y_{r}(t))$,  $t\geq-r$.
We use a standard diagonal argument to
extract a subsequence which we still denote by
$\{X_{\gamma_{k}}\}$ satisfying
 $$\lim\limits_{k\rightarrow\infty}
 d_{BL}(\L(X_{\gamma_{k}}(t)),\nu(t))=0 \quad{\rm in} ~Pr(H)$$
for every $t\in\R$. Note that
$\sup\limits_{t\in\R}\int_{H}\|x\|^{2}\nu(t)(\d x)<+\infty$.
And we have $\mathbb P$-a.s.
\begin{equation*}
  Y_{r}(t)=Y_{r}(s)+\int_{s}^{t}\left(A_{1}(Y_{r}(\sigma))
  +\tilde{A_{2}}(\sigma,Y_{r}(\sigma))\right)\d\sigma
  +\int_{s}^{t}\tilde{G}(\sigma,Y_{r}(\sigma))\d W(\sigma),
\end{equation*}
where $t\geq s\geq-r$.
By the uniqueness in law of the solutions for equation
\eqref{SCDLeq}, we have
$\L(Y_{r}(t))=\mu(t,s,\L(Y_{r}(s)))$, $t\geq s\geq-r$,
i.e. $\nu(t)=\mu(t,s,\nu(s))$, $t\geq s$.
In view of Theorem \ref{Boundedth},
we obtain $\nu=\L(\tilde{X})$. Therefore, we have
\begin{equation*}
  \lim\limits_{k\rightarrow\infty}d_{BL}(\L(X_{\gamma_{k}}(t)),
  \L(\tilde{X}(t)))=0 \quad {\rm{in}}~ Pr(H)
\end{equation*}
for every $t\in\R$.

Note that $X(\cdot+t_{n})$ and $X_{n}(\cdot)$ share the same
distribution. It follows from Definition \ref{compatible} and
Lemma \ref{conlemma} that $X(\cdot)$ is strongly compatible
in distribution.
\end{proof}

By Theorems \ref{th1} and \ref{SCth}, we have the following result.
\begin{coro}\label{corL2*}
Under the conditions of Theorem \ref{SCth} the
following statements hold:
\begin{enumerate}
\item
If $A_{2}\in$ $ C(\mathbb R\times V,V_{2}^{*})$ and
$G\in C(\R\times V,L_{2}(U,H))$
are jointly stationary (respectively, $T$-periodic, quasi-periodic
with the spectrum of frequencies $\nu_1,\ldots,\nu_k$, almost
periodic, almost automorphic, Birkhoff recurrent, Lagrange
stable, Levitan almost periodic, almost recurrent, Poisson
stable) in $t\in\R$ uniformly with respect to $x$ on each bouned
subset, then so is the unique solution
$\varphi \in C_{b}(\mathbb R,L^2(\Omega,\mathbb P;H))$ of equation
\eqref{eqSPDE1} in distribution;
\item
If $A_{2}\in$ $ C(\mathbb R\times V,V_{2}^{*})$ and
$G\in C(\R\times V,L_{2}(U,H))$ are
Lagrange stable and jointly pseudo-periodic (respectively,
pseudo-re\-cur\-rent) in $t\in\R$ uniformly with respect to
$x$ on each bouned subset, then equation \eqref{eqSPDE1} has a unique
solution $\varphi \in C_{b}(\mathbb R,L^2(\Omega,\mathbb P;H))$
which is pseudo-periodic (respectively, pseudo-recurrent) in
distribution.
\end{enumerate}
\end{coro}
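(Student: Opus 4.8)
The plan is to recognize both assertions as direct instances of Shcherbakov's comparability principle, Theorem \ref{th1}, applied with the driving function $\psi:=\mathbb F=(A_2,G)$ and with the ``solution function'' taken to be the distribution of the unique $L^2$-bounded solution $X(\cdot)$. First I would invoke Theorem \ref{SCth}: under (H1)--(H6) the solution $\varphi$ is \emph{strongly compatible in distribution}, which by Definition \ref{compatible} means $\mathfrak M_{(A_2,G)}\subseteq\tilde{\mathfrak M}_{\varphi}$. Once the distributional recurrence is transported to the abstract setting, this inclusion is precisely the statement that $\varphi$ is strongly comparable by character of recurrence with $\mathbb F$, so the machinery of Theorem \ref{th1} becomes available without any further estimate.

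To make Theorem \ref{th1} applicable verbatim I would take the metric space $\mathcal X$ to be the Borel probability measures on $C(\R,H)$ with the topology of weak convergence uniform on compact intervals, and view $t\mapsto\mathcal L(X(\cdot+t))$ as a continuous motion in $C(\R,\mathcal X)$; its continuity is inherited from $\varphi\in C_b(\R,L^2(\Omega,\P;H))$. Under this identification the sets $\tilde{\mathfrak N}_{\varphi}$ and $\tilde{\mathfrak M}_{\varphi}$ of Definition \ref{compatible} coincide with the sets $\mathfrak N$ and $\mathfrak M$ of this motion, so ``(strongly) compatible in distribution'' is literally ``(strongly) comparable by character of recurrence.'' Checking that convergence in distribution uniform on compact intervals is exactly the convergence in $C(\R,\mathcal X)$ is the one point requiring care, and I regard it as the main (though essentially formal) obstacle of the proof.

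With strong comparability in hand, statement (i) splits according to which branch of Theorem \ref{th1} governs a given property. For stationarity, $T$-periodicity, Levitan almost periodicity, almost recurrence and Poisson stability I would first pass from strong comparability to comparability via Theorem \ref{th1}(1), i.e. $\mathfrak N_{(A_2,G)}\subseteq\tilde{\mathfrak N}_{\varphi}$, and then transfer each property by Theorem \ref{th1}(2); for quasi-periodicity, almost periodicity, almost automorphy, Birkhoff recurrence and Lagrange stability I would apply Theorem \ref{th1}(3) directly to the strong comparability. The relative compactness of the orbit $\{\mathcal L(X(\cdot+t)):t\in\R\}$ needed for the Lagrange-type conclusions is already guaranteed upstream, since the uniform bound $\sup_{t\in\R}E\|X(t)\|_S^2<\infty$ of Proposition \ref{tightprop} together with Prohorov's theorem is exactly what allowed Theorem \ref{SCth} to produce the strong compatibility. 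Finally, statement (ii) follows from Theorem \ref{th1}(4): adding the Lagrange stability of $\mathbb F$, strong comparability transfers pseudo-periodicity and pseudo-recurrence to $\varphi$ in distribution, which completes the proof.
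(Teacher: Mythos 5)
Your proposal is correct and follows exactly the paper's own route: the paper proves this corollary simply by combining Theorem \ref{SCth} (strong compatibility in distribution, i.e. $\mathfrak M_{(A_2,G)}\subseteq\tilde{\mathfrak M}_{\varphi}$) with Shcherbakov's comparability principle, Theorem \ref{th1}, applying items (1)--(2) to the Poisson-stable-type properties, item (3) to the Lagrange-stable-type properties, and item (4) for part (ii). Your additional care in identifying convergence in distribution uniformly on compact intervals with convergence in $C(\R,\mathcal X)$ for $\mathcal X$ a space of probability measures is precisely the (implicit) formal bridge the paper relies on.
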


\section{The second Bogolyubov theorem}
Consider the following stochastic partial differential equation
\begin{equation}\label{eqLSDE1.1}
\d X_{\varepsilon}(t)=\left(A(X_{\varepsilon}(t))
+F\left(\frac{t}{\varepsilon},X_{\varepsilon}(t)\right)\right)\d t
+G\left(\frac{t}{\varepsilon},X_{\varepsilon}(t)\right)\d W(t),
\end{equation}
where $A(x)=A_{1}(x)+A_{2}(x)$,
$A_{i}:V_{i}\rightarrow V_{i}^{*}$, $i=1,2$,
$F\in$ $ C(\mathbb R\times H,H)$, $G\in C(\R\times H,L_{2}(U,H))$
and $0<\varepsilon\leq1$. Here $W$ is a $U$-valued two-sided
cylindrical Wiener process with the identity covariance operator
with respect to a filtered probability space
$(\Omega,\mathcal F,\mathbb P,\mathcal F_{t})$,
where $\mathcal F_{t}:=\sigma\{W(u)-W(v):\ u,v\le t\}$.
In this section, we will omit the index $H$ of $\|\cdot\|_{H}$
and $\langle\cdot,\cdot\rangle_{H}$ if it does not cause confusion.

We employ $\Psi$ to denote the space of all decreasing, positive
bounded functions $\omega:\R_{+}\rightarrow\R_{+}$ with
$\lim\limits_{t\rightarrow+\infty}\omega(t)=0$.
Below we need additional conditions.
\begin{enumerate}
\item[\textbf{(H2$'$)}]
There exist constants $\lambda,\lambda'\geq0$, $L_{F},L_{G},M_{0}>0$,
$r>2$ and $\lambda_{F}\in\R$ such that
\[
_{V^{*}}\langle A(u)-A(v),u-v\rangle_{V}
\leq-\lambda\|u-v\|^{2}-\lambda'\|u-v\|^{r},
\]
\[
\langle F(t,x)-F(t,y),x-y\rangle\leq\lambda_{F}\|x-y\|^{2},
\quad
\|F(t,0)\|\leq M_{0}
\]
\[
\|F(t,x)-F(t,y)\|\leq L_{F}\|x-y\|,\quad
\|G(t,x)-G(t,y)\|_{L_{2}(U,H)}\leq L_{G}\|x-y\|
\]
for any $t\in\R$, $u,v\in V$ and $x,y\in H$;
\item[\textbf{(G1)}]
There exist functions $\omega_{1}\in \Psi$ and $\bar{F}\in C(H,H)$
such that
\begin{equation}\label{eqG3}
\frac{1}{T}\left\|\int_{t}^{t+T}[F(s,x)-\bar{F}(x)]\d s\right\|
\le\omega_{1}(T)(1+\|x\|)\nonumber
\end{equation}
for any $T>0$, $x\in H$ and $t\in\mathbb R$;
 \item[\textbf{(G2)}]
There exist functions $\omega_{2}\in \Psi$ and
$\bar{G}\in C(H,L_{2}(U,H))$ such that
\begin{equation}\label{eqG4}
\frac{1}{T}\int_{t}^{t+T}\left\|G(s,x)-\bar{G}(x)\right\|_{L_{2}(U,H)}^{2}
\d s\le\omega_{2}(T)(1+\|x\|^{2})\nonumber
\end{equation}
for any $T>0$, $x\in H$ and $t\in\mathbb R$.
\end{enumerate}

\begin{remark}\label{oaest}\rm
\begin{enumerate}

\item Note that the estimate of solutions to \eqref{eqLSDE1.1}
      for the integral of time increment on $H$ in Lemma \ref{orstes} is weaker than
      the H\"older continuity but helpful to establish the first Bogolyubov theorem.
      If $F$ is just monotone instead of Lipschitz, we need this estimate on $V$,
      which is crucial to establishing the first Bogolyubov theorem based on the technique of time discretization.
      But we cannot obtain this estimate on $V$ unless there are additional assumptions
      on higher regularity of initial data and coefficients.
      However, this higher regularity condition is too strong to apply to porous media equations, which is one of our examples. On the other hand, the higher regularity of initial data is too strong to establish the second Bogolyubov theorem and global averaging principle, which are our main results in this paper.
      Indeed, the first Bogolyubov theorem and the existence and uniqueness of $L^2$-bounded solutions
      play important roles in establishing the second Bogolyubov theorem and global averaging principle (see Theorems \ref{averth} and \ref{gath}).  But we only establish that this bounded solution belongs to
      $L^\infty(\R;L^2(\Omega,\mathbb P;H))\cap L^\infty(\R;L^2(\Omega,\mathbb P;S))$
      (see Theorem \ref{Boundedth} and Proposition \ref{tightprop}), whose regularity is not higher enough for our purpose.
\item In order to obtain recurrent solutions, the systems \eqref{eqLSDE1.1}
      need to be dissipative; that is, $2\lambda-2\lambda_{F}-L_{G}^{2}\geq0$.
  Since the condition
  $2\lambda-2L_{F}-L_{G}^{2}\geq0$ is stronger
  than $2\lambda-2\lambda_{F}-L_{G}^{2}\geq0$ when $\lambda_F$ is negative,
  we introduce the conditions of monotonicity and Lipschitz continuity of $F$ in (H2$'$) simultaneously.
\item Notice that we can just assume $A_{2}$ is hemicontinuous
      when $A_{2}$ is independent of time $t$. In the following, we
      still say that equation \eqref{eqLSDE1.1} satisfies (H5)
      (respectively, (H6)), if $F$ and $G$ are continuous in
      $t\in\R$ uniformly with respect to $u$ on each bounded
      $Q\subset H$ (respectively, there exist constants
      $c_{4},M_{0}>0$ such that $2_{V^{*}}\langle A(v),T_{n}v
      \rangle_{V}+2\langle F(t,v),T_{n}v\rangle
      +\|G(t,v)\|_{L_{2}(U,H)}\leq-c_{4}\|v\|_{n}^{2}+M_{0}$ for
      all $v\in V$ and $t\in\R$).

\item
      It can be verified that (G1) (respectively, (G2)) implies
      $$\lim\limits_{T\rightarrow\infty}\frac{1}{T}\int_{t}^{t+T}
      F(s,x)\d s=\bar{F}(x)$$
      (respectively,
      $\lim\limits_{T\rightarrow\infty}\frac{1}{T}\int_{t}^{t+T}
      \|G(s,x)-\bar{G}(x)\|_{L_{2}(U,H)}^{2}$\d s=0)
      uniformly with respect to $t\in\R$ and $x$ in any bounded subset on $H$.
\end{enumerate}
\end{remark}

Denote by
$F_{\varepsilon}(t,x):=F(\frac{t}{\varepsilon},x)$ and
$G_{\varepsilon}(t,x):=G(\frac{t}{\varepsilon},x)$
for any $t\in\R$, $x\in H$ and $\varepsilon \in(0,1]$.
Equation \eqref{eqLSDE1.1} can be written as
\begin{equation}\label{eqG2.1}
\d X_{\varepsilon}(t)=(A(X_{\varepsilon}(t))
+F_{\varepsilon}(t,X_{\varepsilon}(t)))\d t
+G_{\varepsilon}(t,X_{\varepsilon}(t))\d W(t).
\end{equation}
Along with equations \eqref{eqLSDE1.1}--\eqref{eqG2.1} we
consider the following averaged  equation
\begin{equation}\label{eqG5_1}
\d X(t)=\left(A(X(t))+\bar{F}(X(t))\right)\d t
+\bar{G}(X(t))\d W(t).
\end{equation}

\begin{lemma}\label{FGuni}
If $F$ and $G$ satisfy (H2$'$) and (G1)--(G2), then
$\bar{F}$ and $\bar{G}$ in (G1)--(G2) also satisfy
(H2$'$) with the same constants.
\end{lemma}
\begin{proof}
We only need to prove the conclusion for $\bar{F}$; the case of $\bar{G}$ is similar.
It follows from Remark \ref{oaest} (iv) that
\begin{align*}
\|\bar{F}(0)\|=\left\|\lim\limits_{T\rightarrow\infty}\frac1T\int_0^T F(t,0)\d t\right\|
\leq\lim\limits_{T\rightarrow\infty}\frac1T\int_0^T \|F(t,0)\|\d t\leq M_0.
\end{align*}
For any $x,y\in H$ one sees that
\begin{align*}
&
\left\langle \bar{F}(x)-\bar{F}(y),x-y\right\rangle\\
&
=\left\langle \bar{F}(x)-\frac1T\int_0^T F(t,x)\d t,x-y\right\rangle
+\left\langle \frac1T\int_0^T\left(F(t,x)-F(t,y)\right)\d t,x-y \right\rangle\\
&\quad
+\left\langle\frac1T\int_0^T F(t,y)\d t-\bar{F}(y),x-y\right\rangle\\
&
\leq \frac1T\left\|\int_0^T\left(\bar{F}(x)-F(t,x)\right)\d t\right\|\cdot
\|x-y\|
+\frac1T\int_0^T\left\langle F(t,x)-F(t,y),x-y\right\rangle\d t\\
&\quad
+
\frac1T\left\|\int_0^T
\left(F(t,y)-\bar{F}(y)\right)\d t\right\|\cdot\|x-y\|\\
&
\leq\omega_1(T)(1+\|x\|)\|x-y\|+\lambda_F\|x-y\|^2
+\omega_1(T)(1+\|y\|)\|x-y\|
\end{align*}
and
\begin{align*}
\|\bar{F}(x)-\bar{F}(y)\|
&
\leq\left\|\bar{F}(x)-\frac1T\int_0^TF(t,x)\d t\right\|
+\left\|\frac1T\int_0^T\left(F(t,x)-F(t,y)\right)\d t\right\|\\
&\quad
+\left\|\frac1T\int_0^TF(t,y)\d t-\bar{F}(y)\right\|\\
&
\leq \omega_1(T)(1+\|x\|)+L_F\|x-y\|
+\omega_1(T)(1+\|y\|).
\end{align*}
Letting $T\rightarrow\infty$ in the above two inequalities, we have
\[
\langle\bar{F}(x)-\bar{F}(y),x-y\rangle\leq\lambda_F\|x-y\|^2,\quad
\|\bar{F}(x)-\bar{F}(y)\|\leq L_F\|x-y\|
\]
for all $x,y\in H$ provided $\lim\limits_{T\rightarrow\infty}\omega_1(T)=0$.
\end{proof}

\begin{remark}\label{FGunirm}\rm
It follows from Lemma \ref{FGuni} that
estimates \eqref{ineq2}, \eqref{peq}, \eqref{gaseq},
\eqref{compeq} uniformly hold for $\varepsilon\in(0,1]$, and
$\bar{F}$ and $\bar{G}$.
\end{remark}

Recall that $C_{\alpha}$ mean some positive
constant which depends on $\alpha$. For simplicity,
we just write $C$ when $C$ depends on some parameters of
$\lambda,\lambda',r,\lambda_{F}, L_{F},L_{G},c_{1},c_{2},c_{2}',
\alpha_{1},\alpha_{2},M_{0},c_{3},c_{3}'$ in (H1), (H2$'$)
and (H3).
Let $\delta>0$ be a fixed constant depending on $\varepsilon$.
For any given stochastic process $\phi$,
define a step process $\tilde{\phi}$ such that
$\tilde{\phi}(\sigma)=\phi(s+k\delta)$
for any $\sigma\in[s+k\delta,s+(k+1)\delta)$.
Employing the technique of time discretization, we have
the following estimates.

\begin{lemma}\label{orstes}
Assume {\rm{(H1)}}, {\rm{(H2$'$)}}, {\rm{(H3)--(H4)}}
and {\rm{(G1)--(G2)}} hold.
Let $X_{\varepsilon}(t,s,\zeta_s^\varepsilon),t\geq s$ be the solution of \eqref{eqG2.1}
with the initial value
$X_{\varepsilon}(s,s,\zeta_s^\varepsilon)=\zeta_{s}^{\varepsilon}$ and $\bar{X}(t,s,\zeta_s),t\geq s$ be the
solution of \eqref{eqG5_1} with the initial value
$\bar{X}(s,s,\zeta_s)=\zeta_{s}$. Then we have
\begin{equation}\label{orsteseq}
E\int_{s}^{s+T}\|X_{\varepsilon}(\sigma,s,\zeta_s^\varepsilon)
-\tilde{X}_{\varepsilon}(\sigma,s,\zeta_s^\varepsilon)\|^{2}\d\sigma\leq
C_{T}(1+E\|\zeta_{s}^{\varepsilon}\|^{2})\delta^{\frac{1}{2}}
\end{equation}
and
\begin{equation}\label{avesteseq}
E\int_{s}^{s+T}\|\bar{X}(\sigma,s,\zeta_s)-\tilde{X}(\sigma,s,\zeta_s)\|^{2}\d\sigma
\leq C_{T}(1+E\|\zeta_{s}\|^{2})\delta^{\frac{1}{2}}
\end{equation}
for any $s\in\R$ and $T>0$, where $\tilde{X}:=\tilde{\bar{X}}$.
\end{lemma}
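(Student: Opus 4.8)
The plan is to prove \eqref{orsteseq} by a grid decomposition and then to obtain \eqref{avesteseq} by the \emph{verbatim} argument: by Remark \ref{oaest}(2) the averaged coefficients $\bar F,\bar G$ satisfy (H2$'$) and (H4) with the same constants, so every estimate below applies equally to \eqref{eqG5_1}. Fix $s\in\R$, $T>0$ and set $t_k:=s+k\delta$, so that $\tilde Y_\varepsilon(\sigma)=Y_\varepsilon(t_k)$ for $\sigma\in[t_k,t_{k+1})$ and $E\int_s^{s+T}\|Y_\varepsilon-\tilde Y_\varepsilon\|^2\d\sigma=\sum_k E\int_{t_k}^{t_{k+1}}\|Y_\varepsilon(\sigma)-Y_\varepsilon(t_k)\|^2\d\sigma$, the sum ranging over the at most $\lfloor T/\delta\rfloor+1$ grid intervals meeting $[s,s+T]$. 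On each interval I would start from the integral form $Y_\varepsilon(\sigma)-Y_\varepsilon(t_k)=\int_{t_k}^\sigma A(Y_\varepsilon)\d\tau+\int_{t_k}^\sigma F_\varepsilon(\tau,Y_\varepsilon)\d\tau+\int_{t_k}^\sigma G_\varepsilon(\tau,Y_\varepsilon)\d W$ and treat the three displacements separately. Throughout I would invoke the uniform second moment bound $\sup_{\tau}E\|Y_\varepsilon(\tau)\|^2\le C(1+E\|\zeta_s^\varepsilon\|^2)$ supplied by \eqref{ineq2} (Lemma \ref{pmes}, with $p=1$) and the energy bound \eqref{peq} of Lemma \ref{pe}, both holding uniformly in $\varepsilon\in(0,1]$ thanks to Remark \ref{oaest}(2).

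The two non-monotone displacements are routine. For the noise term the It\^o isometry gives $E\|\int_{t_k}^\sigma G_\varepsilon\d W\|^2=E\int_{t_k}^\sigma\|G_\varepsilon\|_{L_2(U,H)}^2\d\tau$, and the linear growth $\|G_\varepsilon(\tau,u)\|_{L_2(U,H)}\le M_0+L_G\|u\|$ coming from (H2$'$) and (H4) bounds this by $C(\sigma-t_k)(1+\sup_\tau E\|Y_\varepsilon(\tau)\|^2)$; integrating in $\sigma$ and summing over the intervals contributes $O(\delta)$. For the $F$-term, Cauchy--Schwarz in time together with $\|F_\varepsilon(\tau,u)\|\le M_0+L_F\|u\|$ from (H2$'$) yields $E\|\int_{t_k}^\sigma F_\varepsilon\d\tau\|^2\le(\sigma-t_k)E\int_{t_k}^\sigma\|F_\varepsilon\|^2\d\tau\le C(\sigma-t_k)^2(1+\sup_\tau E\|Y_\varepsilon(\tau)\|^2)$, which contributes $O(\delta^2)$ after summation. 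Both are therefore dominated by $C_T(1+E\|\zeta_s^\varepsilon\|^2)\delta^{1/2}$.

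The \emph{main obstacle}, and the source of the exponent $\tfrac12$, is the monotone drift displacement $\int_{t_k}^\sigma A(Y_\varepsilon)\d\tau$: since $A=A_1+A_2$ maps into $V^*$ and is unbounded, this displacement cannot be measured directly in $H$, and one cannot naively freeze the value $Y_\varepsilon(t_k)$, which at a single deterministic time need not even lie in $V$. My plan is to avoid every single-time $V$-norm and to work only with the time-integrated quantities that Lemma \ref{pe} actually controls, namely $E\int_{t_k}^{t_{k+1}}(\|Y_\varepsilon\|_{V_1}^{\alpha_1}+\|Y_\varepsilon\|_{V_2}^{\alpha_2})\d\tau$ and $E\int_{t_k}^{t_{k+1}}(\|A_1(Y_\varepsilon)\|_{V_1^*}^{\alpha_1/(\alpha_1-1)}+\|A_2(\tau,Y_\varepsilon)\|_{V_2^*}^{\alpha_2/(\alpha_2-1)})\d\tau$. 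Concretely, after expanding $\|Y_\varepsilon(\sigma)-Y_\varepsilon(t_k)\|^2$ by It\^o's formula I would pair the $A$-increment against $Y_\varepsilon(\sigma)\in V$ rather than against the frozen value, use the strong monotonicity in (H2$'$) to discard the unfavourable self-interaction $_{V^*}\langle A(Y_\varepsilon(\tau))-A(Y_\varepsilon(t_k)),Y_\varepsilon(\tau)-Y_\varepsilon(t_k)\rangle_V\le0$, and control the remaining pairings by $\|A_i(Y_\varepsilon)\|_{V_i^*}\|Y_\varepsilon\|_{V_i}$ through the duality $V_i\subset H\subset V_i^*$. A H\"older passage in time against the conjugate exponents $\alpha_i$ and $\alpha_i/(\alpha_i-1)$, combined with Cauchy--Schwarz across the extra $\sigma$-integration, then yields a pointwise bound of the type $E\|Y_\varepsilon(\sigma)-Y_\varepsilon(t_k)\|^2\le C(1+E\|\zeta_s^\varepsilon\|^2)(\sigma-t_k)^{1/2}$; it is precisely the interpolation gap between $V^*$ and $H$ that degrades the otherwise optimal linear rate of the regular terms to this square root, and integrating $(\sigma-t_k)^{1/2}$ over a step and summing over the $O(T/\delta)$ intervals produces the overall factor $\delta^{1/2}$.

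I expect this monotone term to be the only genuine difficulty; the martingale displacement, being $\mathcal F_{t_k}$-centred, has vanishing expectation against any $\mathcal F_{t_k}$-measurable quantity, which removes the apparently dangerous cross terms, and everything else is already controlled by the uniform moment and energy estimates above. To make the It\^o expansion of the $H$-norm increment rigorous in the presence of the $V^*$-valued drift, I would carry it out on a Galerkin approximation of \eqref{eqG2.1}, whose finite-dimensional solutions genuinely take values in $V$, derive the bound with constants independent of the approximation level (using only the $V$- and $V^*$-a priori estimates, which are uniform), and transfer the conclusion to $Y_\varepsilon$ by the strong $L^2([s,s+T]\times\Omega;H)$ convergence of the scheme. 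Summing the three contributions and invoking the uniform-in-$\varepsilon$ bounds then gives \eqref{orsteseq}, and running the same computation with $(\bar F,\bar G)$ in place of $(F_\varepsilon,G_\varepsilon)$ gives \eqref{avesteseq}.
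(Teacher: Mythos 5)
Your overall skeleton (grid decomposition, linear-growth estimates for the $F$- and $G$-displacements, uniform use of \eqref{ineq2} and \eqref{peq}) matches the paper's, but the treatment of the monotone drift --- which you correctly identify as the only genuine difficulty --- has a gap that your plan does not close. After the It\^o expansion of $\|Y_{\varepsilon}(\sigma)-Y_{\varepsilon}(t_{k})\|^{2}$, subtracting and adding $A(Y_{\varepsilon}(t_{k}))$ to invoke monotonicity leaves the term
$_{V^{*}}\langle A(Y_{\varepsilon}(t_{k})),Y_{\varepsilon}(\tau)-Y_{\varepsilon}(t_{k})\rangle_{V}$,
and by (H4) and duality any bound on it costs $\|Y_{\varepsilon}(t_{k})\|_{V_{i}}^{\alpha_{i}}$ \emph{at the fixed deterministic grid time} $t_{k}$. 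No such single-time estimate exists: Lemma \ref{pe} controls only $\d t\otimes\P$-integrated $V$-norms, the solution need not even lie in $V$ at a fixed time, and the Galerkin detour does not help because the required bounds are not uniform in the approximation level (summing $\delta^{2}E\|Y_{\varepsilon}(t_{k})\|_{V_{i}}^{\alpha_{i}}$ over the grid is a Riemann sum of a merely integrable quantity, which the energy estimate does not dominate). Consequently the claimed pointwise bound $E\|Y_{\varepsilon}(\sigma)-Y_{\varepsilon}(t_{k})\|^{2}\le C(1+E\|\zeta_{s}^{\varepsilon}\|^{2})(\sigma-t_{k})^{1/2}$ is unsubstantiated, and this is precisely what your argument rests on. The paper avoids frozen grid values altogether: it splits
$\|Y_{\varepsilon}(\sigma)-Y_{\varepsilon}(s+k\delta)\|^{2}\le 2\|Y_{\varepsilon}(\sigma)-Y_{\varepsilon}(\sigma-\delta)\|^{2}+2\|Y_{\varepsilon}(\sigma-\delta)-Y_{\varepsilon}(s+k\delta)\|^{2}$
(the terms $I_{k}$, $J_{k}$ in \eqref{orsteseq1}), so the reference point is always the \emph{moving} value $Y_{\varepsilon}(\sigma-\delta)$; after integrating in $\sigma$ and a change of variables, every $V$-norm that appears is time-integrated and hence controlled by \eqref{peq}. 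Notably, the paper's proof of this lemma uses no monotonicity at all --- only the boundedness (H4), Young's inequality, and the integrated energy estimate.

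There is a second, smaller gap: your claim that the It\^o martingale term vanishes in expectation presupposes that $\int\langle Y_{\varepsilon}(\tau)-Y_{\varepsilon}(t_{k}),G_{\varepsilon}(\tau,Y_{\varepsilon}(\tau))\d W(\tau)\rangle$ is a true martingale, which requires square-integrability of the integrand, i.e.\ essentially fourth moments of $Y_{\varepsilon}$. Under the hypotheses of this lemma (no dissipativity condition, initial data only in $L^{2}$) these are not available, so the integral is only a local martingale. The paper instead estimates this term via the Burkholder--Davis--Gundy inequality (see \eqref{orsteseq4}), which needs only $E\sup_{t}\|Y_{\varepsilon}(t)\|^{2}<\infty$, and it is exactly this BDG estimate --- $\delta^{3/2}$ per interval, summed over $O(T/\delta)$ intervals --- that produces the overall rate $\delta^{1/2}$ in \eqref{orsteseq}; it does not come from an interpolation loss in the drift, as your proposal suggests.
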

\begin{proof}
For simplicity, let
$X_{\varepsilon}(\sigma):=X_{\varepsilon}(\sigma,s,\zeta_s^\varepsilon)$
and $\bar{X}(\sigma):=\bar{X}(\sigma,s,\zeta_s)$.
By Lemma \ref{pe} we have
\begin{align}\label{orsteseq1}
&
E\int_{s}^{s+T}\|X_{\varepsilon}(\sigma)
-\tilde{X}_{\varepsilon}(\sigma)\|^{2}\d\sigma\\\nonumber
&
=E\int_{s}^{s+\delta}\|X_{\varepsilon}(\sigma)
-\zeta_{s}^{\varepsilon}\|^{2}\d\sigma+E\sum_{k=1}^{T(\delta)-1}
\int_{s+k\delta}^{s+(k+1)\delta}\|X_{\varepsilon}(\sigma)
-X_{\varepsilon}(s+k\delta)\|^{2}\d\sigma\\\nonumber
&\quad
+E\int_{s+T(\delta)\delta}^{s+T}\|X_{\varepsilon}(\sigma)
-X_{\varepsilon}(s+T(\delta)\delta)\|^{2}\d\sigma\\\nonumber
&
\leq C_{T}\left(1+E\|\zeta_{s}^{\varepsilon}\|^{2}\right)\delta
+2E\sum_{k=1}^{T(\delta)-1}\int_{s+k\delta}^{s+(k+1)\delta}
\|X_{\varepsilon}(\sigma)-X_{\varepsilon}(\sigma-\delta)\|^{2}
\d\sigma\\\nonumber
&\quad
+2E\sum_{k=1}^{T(\delta)-1}\int_{s+k\delta}^{s+(k+1)\delta}
\|X_{\varepsilon}(\sigma-\delta)-X_{\varepsilon}(s+k\delta)\|^{2}
\d\sigma\\\nonumber
&
=:C_{T}\left(1+E\|\zeta_{s}^{\varepsilon}\|^{2}\right)\delta
+2\sum_{k=1}^{T(\delta)-1}I_{k}+2\sum_{k=1}^{T(\delta)-1}J_{k}.
\end{align}

Given $k\in[1,T(\delta)-1)$, for any
$\sigma\in[s+k\delta,s+(k+1)\delta)$, by It\^o's formula, (H2$'$),
(H3)--(H4) and Young's inequality we get
\begin{align}\label{orsteseq2}
&
\|X_{\varepsilon}(\sigma)-X_{\varepsilon}(\sigma-\delta)\|^{2}\\\nonumber
&
=\int_{\sigma-\delta}^{\sigma}\left(2_{V^{*}}\langle A(X_{\varepsilon}(u)),
X_{\varepsilon}(u)-X_{\varepsilon}(\sigma-\delta)\rangle_{V}
+2\langle F_{\varepsilon}(u,X_{\varepsilon}(u)),
X_{\varepsilon}(u)-X_{\varepsilon}(\sigma-\delta)\rangle
\right)\d u\\\nonumber
&\quad
+\int_{\sigma-\delta}^{\sigma}
\|G_{\varepsilon}(u,X_{\varepsilon}(u))\|_{L_{2}(U,H)}^{2}\d u
+2\int_{\sigma-\delta}^{\sigma}\langle X_{\varepsilon}(u)
-X_{\varepsilon}(\sigma-\delta),G_{\varepsilon}(u,X_{\varepsilon}(u))
\d W(u)\rangle\\\nonumber
&
\leq\int_{\sigma-\delta}^{\sigma}\bigg(
2\|A_{1}(X_{\varepsilon}(u))\|_{V_{1}^{*}}
\|X_{\varepsilon}(u)-X_{\varepsilon}(\sigma-\delta)\|_{V_{1}}
+2\|A_{2}(X_{\varepsilon}(u))\|_{V_{2}^{*}}
\|X_{\varepsilon}(u)-X_{\varepsilon}(\sigma-\delta)\|_{V_{2}}\\\nonumber
&\qquad
+2\|F_{\varepsilon}(u,X_{\varepsilon}(u))\|
\|X_{\varepsilon}(u)-X_{\varepsilon}(\sigma-\delta)\|
+2L_{G}^{2}\|X_{\varepsilon}(u)\|^{2}+2M_{0}^{2}\bigg)\d u\\\nonumber
&\quad
+2\int_{\sigma-\delta}^{\sigma}\langle X_{\varepsilon}(u)
-X_{\varepsilon}(\sigma-\delta),G_{\varepsilon}(u,X_{\varepsilon}(u))
\d W(u)\rangle\\\nonumber
&
\leq\int_{\sigma-\delta}^{\sigma}\bigg[
2\left(\|X_{\varepsilon}(u)\|^{\alpha_{1}-1}_{V_{1}}+M_{0}\right)
\|X_{\varepsilon}(u)-X_{\varepsilon}(\sigma-\delta)\|_{V_{1}}\\\nonumber
&\qquad
+2\left(\|X_{\varepsilon}(u)\|^{\alpha_{2}-1}_{V_{2}}+M_{0}\right)
\|X_{\varepsilon}(u)-X_{\varepsilon}(\sigma-\delta)\|_{V_{2}}\\\nonumber
&\qquad
+2\left(L_{F}\|X_{\varepsilon}(u)\|+M_{0}\right)
\|X_{\varepsilon}(u)-X_{\varepsilon}(\sigma-\delta)\|
+2L_{G}^{2}\|X_{\varepsilon}(u)\|^{2}+2M_{0}^{2}\bigg]\d u\\\nonumber
&\quad
+2\int_{\sigma-\delta}^{\sigma}\langle X_{\varepsilon}(u)
-X_{\varepsilon}(\sigma-\delta),G_{\varepsilon}(u,X_{\varepsilon}(u))
\d W(u)\rangle\\\nonumber
&
\leq\int_{\sigma-\delta}^{\sigma}\bigg[
2\|X_{\varepsilon}(u)\|^{\alpha_{1}}_{V_{1}}
+2\|X_{\varepsilon}(u)\|^{\alpha_{1}-1}_{V_{1}}
\|X_{\varepsilon}(\sigma-\delta)\|_{V_{1}}
+2M_{0}\|X_{\varepsilon}(u)\|_{V_{1}}
+2M_{0}\|X_{\varepsilon}(\sigma-\delta)\|_{V_{1}}\\\nonumber
&\qquad
+2\|X_{\varepsilon}(u)\|^{\alpha_{2}}_{V_{2}}
+2\|X_{\varepsilon}(u)\|^{\alpha_{2}-1}_{V_{2}}
\|X_{\varepsilon}(\sigma-\delta)\|_{V_{2}}
+2M_{0}\|X_{\varepsilon}(u)\|_{V_{2}}
+2M_{0}\|X_{\varepsilon}(\sigma-\delta)\|_{V_{2}}\\\nonumber
&\qquad
+\left(2L_{F}+L_{F}^{2}+2L_{G}^{2}+1\right)\|X_{\varepsilon}(u)\|^{2}
+2\|X_{\varepsilon}(\sigma-\delta)\|^{2}+4M_{0}^{2}\bigg]
\d u\\\nonumber
&\quad
+2\int_{\sigma-\delta}^{\sigma}\langle X_{\varepsilon}(u)
-X_{\varepsilon}(\sigma-\delta),G_{\varepsilon}(u,X_{\varepsilon}(u))
\d W(u)\rangle\\\nonumber
&
\leq\int_{\sigma-\delta}^{\sigma}
\bigg[4\|X_{\varepsilon}(u)\|^{\alpha_{1}}_{V_{1}}
+\frac{4}{\alpha_{1}}
\|X_{\varepsilon}(\sigma-\delta)\|^{\alpha_{1}}_{V_{1}}
+4\|X_{\varepsilon}(u)\|^{\alpha_{2}}_{V_{2}}
+\frac{4}{\alpha_{2}}
\|X_{\varepsilon}(\sigma-\delta)\|^{\alpha_{2}}_{V_{2}}
+\frac{4(\alpha_{1}-1)}{\alpha_{1}}M_{0}^{\frac{\alpha_{1}}
{\alpha_{1}-1}}\\\nonumber
&\qquad
+\frac{4(\alpha_{2}-1)}{\alpha_{2}}M_{0}^{\frac{\alpha_{2}}
{\alpha_{2}-1}}+\left(2L_{F}+L_{F}^{2}+2L_{G}^{2}+1\right)
\|X_{\varepsilon}(u)\|^{2}+2\|X_{\varepsilon}(\sigma-\delta)\|^{2}
+4M_{0}^{2}\bigg]\d u\\\nonumber
&\quad
+2\int_{\sigma-\delta}^{\sigma}\langle X_{\varepsilon}(u)
-X_{\varepsilon}(\sigma-\delta),G_{\varepsilon}(u,X_{\varepsilon}(u))
\d W(u)\rangle.\\\nonumber
\end{align}
Set $s^{k\delta}:=s+k\delta$ for all $s\in\R,k\geq0$.
Then we have
\begin{align}\label{orsteseq3}
I_{k}
&
:=E\int_{s^{k\delta}}^{s^{(k+1)\delta}}\|X_{\varepsilon}(\sigma)
-X_{\varepsilon}(\sigma-\delta)\|^{2}\d\sigma\\\nonumber
&
\leq E\int_{s^{k\delta}}^{s^{(k+1)\delta}}\bigg\{\int_{\sigma-\delta}^{\sigma}
\bigg[4\|X_{\varepsilon}(u)\|^{\alpha_{1}}_{V_{1}}
+\frac{4}{\alpha_{1}}
\|X_{\varepsilon}(\sigma-\delta)\|^{\alpha_{1}}_{V_{1}}
+4\|X_{\varepsilon}(u)\|^{\alpha_{2}}_{V_{2}}
+\frac{4}{\alpha_{2}}
\|X_{\varepsilon}(\sigma-\delta)\|^{\alpha_{2}}_{V_{2}}\\\nonumber
&\qquad
+\frac{4(\alpha_{1}-1)}{\alpha_{1}}M_{0}^{\frac{\alpha_{1}}
{\alpha_{1}-1}}
+\frac{4(\alpha_{2}-1)}{\alpha_{2}}M_{0}^{\frac{\alpha_{2}}
{\alpha_{2}-1}}+\left(2L_{F}+L_{F}^{2}+2L_{G}^{2}+1\right)
\|X_{\varepsilon}(u)\|^{2}\\\nonumber
&\qquad
+2\|X_{\varepsilon}(\sigma-\delta)\|^{2}+4M_{0}^{2}\bigg]\d u
+2\int_{\sigma-\delta}^{\sigma}\langle X_{\varepsilon}(u)
-X_{\varepsilon}(\sigma-\delta),G_{\varepsilon}(u,X_{\varepsilon}(u))
\d W(u)\rangle\bigg\}\d\sigma\\\nonumber
&
\leq E\int_{s^{(k-1)\delta}}^{s^{(k+1)\delta}}\int_{u}^{u+\delta}\Big[
4\|X_{\varepsilon}(u)\|^{\alpha_{1}}_{V_{1}}
+4\|X_{\varepsilon}(u)\|^{\alpha_{2}}_{V_{2}}
+C
+C\|X_{\varepsilon}(u)\|^{2}\Big]
\d\sigma\d u\\\nonumber
&\quad
+E\int_{s^{k\delta}}^{s^{(k+1)\delta}}\delta\left(\frac{4}{\alpha_{1}}
\|X_{\varepsilon}(\sigma-\delta)\|^{\alpha_{1}}_{V_{1}}
+\frac{4}{\alpha_{2}}
\|X_{\varepsilon}(\sigma-\delta)\|^{\alpha_{2}}_{V_{2}}
+2\|X_{\varepsilon}(\sigma-\delta)\|^{2}\right)\d\sigma
+I_{k}^{2}\\\nonumber
&
\leq E\int_{s^{(k-1)\delta}}^{s^{(k+1)\delta}}\delta\Big[
4\|X_{\varepsilon}(u)\|^{\alpha_{1}}_{V_{1}}
+4\|X_{\varepsilon}(u)\|^{\alpha_{2}}_{V_{2}}
+C+C\|X_{\varepsilon}(u)\|^{2}\Big]\d u\\\nonumber
&\quad
+E\int_{s^{k\delta}}^{s^{(k+1)\delta}}\delta\left(\frac{4}{\alpha_{1}}
\|X_{\varepsilon}(\sigma-\delta)\|^{\alpha_{1}}_{V_{1}}
+\frac{4}{\alpha_{2}}
\|X_{\varepsilon}(\sigma-\delta)\|^{\alpha_{2}}_{V_{2}}
+2\|X_{\varepsilon}(\sigma-\delta)\|^{2}\right)\d\sigma
+I_{k}^{2}\\\nonumber
&
\leq \delta C E
\int_{s^{(k-1)\delta}}^{s^{(k+1)\delta}}\left(
\|X_{\varepsilon}(u)\|^{\alpha_{1}}_{V_{1}}
+\|X_{\varepsilon}(u)\|^{\alpha_{2}}_{V_{2}}
+\|X_{\varepsilon}(u)\|^{2}+1\right)\d u+I_{k}^{2}.
\end{align}
Now we estimate $I_{k}^{2}$. In view of Burkholder-Davis-Gundy
inequality, (H2$'$), (H4) and Young's inequality, we obtain
\begin{align}\label{orsteseq4}
I_{k}^{2}
&
:=2E\int_{s^{k\delta}}^{s^{(k+1)\delta}}\int_{\sigma-\delta}^{\sigma}
\langle X_{\varepsilon}(u)-X_{\varepsilon}(\sigma-\delta),
G_{\varepsilon}(u,X_{\varepsilon}(u))
\d W(u)\rangle\d\sigma\\\nonumber
&
\leq6\int_{s^{k\delta}}^{s^{(k+1)\delta}}E\left(\int_{\sigma-\delta}^{\sigma}
\|G_{\varepsilon}(u,X_{\varepsilon}(u))\|_{L_{2}(U,H)}^{2}
\|X_{\varepsilon}(u)-X_{\varepsilon}(\sigma-\delta)\|^{2}\d u
\right)^{\frac{1}{2}}\d\sigma\\\nonumber
&
\leq6\int_{s^{k\delta}}^{s^{(k+1)\delta}}E\left(\int_{\sigma-\delta}^{\sigma}
\left(2L_{G}^{2}\|X_{\varepsilon}(u)\|^{2}+2M_{0}^{2}\right)
\|X_{\varepsilon}(u)-X_{\varepsilon}(\sigma-\delta)\|^{2}\d u
\right)^{\frac{1}{2}}\d\sigma\\\nonumber
&
\leq C\int_{s^{k\delta}}^{s^{(k+1)\delta}}
E\left[\left(\sup_{s\leq t\leq s+T}
\|X_\eps(t)\|^2+1\right)^{\frac{1}{2}}\left(
\int_{\sigma-\delta}^{\sigma}
\|X_{\eps}(u)
-X_{\eps}(\sigma-\delta)\|^{2}\d u\right)^{\frac{1}{2}}
\right]\d\sigma\\\nonumber
&
\leq\delta^{\frac{1}{2}}C
\left(E\sup_{s\leq t\leq s+T}
\|X_{\eps}(u)\|^{2}+1\right)^{\frac{1}{2}}
\left(\int_{s^{k\delta}}^{s^{(k+1)\delta}}
\int_{\sigma-\delta}^{\sigma}
E\left(\|X_{\eps}(u)\|^{2}
+\|X_{\eps}(\sigma-\delta)\|^{2}\right)\d u\d\sigma
\right)^{\frac{1}{2}}\\\nonumber
&
\leq\delta^{\frac{1}{2}}C
\left(E\|\zeta^\eps_s\|^{2}+1\right)
\left(
\int_{s^{k\delta}}^{s^{(k+1)\delta}}
\int_{\sigma-\delta}^{\sigma}E
\left(\|X_{\eps}(u)\|^{2}
+\|X_{\eps}(\sigma-\delta)\|^{2}\right)\d u\d\sigma
\right)^{\frac{1}{2}}\\\nonumber
&
\leq\delta C
\left(E\|\zeta^\eps_s\|^{2}+1\right)\left(E
\int_{s^{(k-1)\delta}}^{s^{(k+1)\delta}}
\|X_{\eps}(u)\|^{2}\d u\right)^{\frac{1}{2}}.
\end{align}
Therefore
\eqref{orsteseq3} and \eqref{orsteseq4} yield
\begin{align}\label{orsteseq5}
I_{k}
&
\leq \delta C
E\int_{s^{(k-1)\delta}}^{s^{(k+1)\delta}}
\left(\|X_{\varepsilon}(u)\|^{\alpha_{1}}_{V_{1}}
+\|X_{\varepsilon}(u)\|^{\alpha_{2}}_{V_{2}}
+\|X_{\varepsilon}(u)\|^{2}+1\right)\d u\\\nonumber
&\quad
+\delta C
\left(E\|\zeta^\eps_s\|^{2}+1\right)\left(E
\int_{s^{(k-1)\delta}}^{s^{(k+1)\delta}}
\|X_{\eps}(u)\|^{2}\d u\right)^{\frac{1}{2}}.
\end{align}
By \eqref{peq} and Remark \ref{FGunirm} we get
\begin{align}\label{orsteseq6}
2\sum_{k=1}^{T(\delta)-1}I_{k}
&
\leq\delta C E\int_{s}^{s+T}
\left(\|X_{\varepsilon}(u)\|^{\alpha_{1}}_{V_{1}}
+\|X_{\varepsilon}(u)\|^{\alpha_{2}}_{V_{2}}
+\|X_{\varepsilon}(u)\|^{2}+1\right)\d u\\\nonumber
&\quad
+\delta C
\left(E\|\zeta^\eps_s\|^{2}+1\right)\sum_{k=1}^{T(\delta)-1}
\left(E\int_{s^{(k-1)\delta}}^{s^{(k+1)\delta}}
\|X_{\eps}(u)\|^{2}\d u\right)^{\frac{1}{2}}\\\nonumber
&
\leq\delta C E\int_{s}^{s+T}
\left(\|X_{\varepsilon}(u)\|^{\alpha_{1}}_{V_{1}}
+\|X_{\varepsilon}(u)\|^{\alpha_{2}}_{V_{2}}
+\|X_{\varepsilon}(u)\|^{2}+1\right)\d u\\\nonumber
&\quad
+\delta^{\frac12}C
\left(E\|\zeta^\eps_s\|^{2}+1\right)\left(
\int_s^{s+T}E\|X_\eps(t)\|^2\d t\right)^{\frac12}\\\nonumber
&
\leq C_{T}\left(1+E\|\zeta_{s}^{\varepsilon}\|^{2}\right)
\delta^{\frac{1}{2}}.
\end{align}
Similarly, we have
\begin{align}\label{orsteseq7}
2\sum_{k=1}^{T(\delta)-1}J_{k}
&
\leq C_{T}\left(1+E\|\zeta_{s}^{\varepsilon}\|^{2}\right)
\delta^{\frac{1}{2}}.
\end{align}
Combining \eqref{orsteseq1}, \eqref{orsteseq6} and \eqref{orsteseq7},
we obtain
\begin{equation*}
E\int_{s}^{s+T}\|X_{\varepsilon}(\sigma)-\tilde{X}_{\varepsilon}
(\sigma)\|^{2}\d\sigma\leq C_{T}(1
+E\|\zeta_{s}^{\varepsilon}\|^{2})\delta^{\frac{1}{2}}.
\end{equation*}

It follows from the same steps as in the proof of
\eqref{orsteseq} that
\begin{equation*}
E\int_{s}^{s+T}\|\bar{X}(\sigma)-\tilde{X}(\sigma)\|^{2}\d\sigma
\leq C_{T}(1+E\|\zeta_{s}\|^{2})\delta^{\frac{1}{2}}.
\end{equation*}
\end{proof}

Now we establish the following first Bogolyubov theorem.
\begin{theorem}\label{avethf}
Suppose that {\rm{(G1)--(G2)}}, {\rm{(H1)}}, {\rm{(H2$'$)}} and
{\rm{(H3)--(H4)}} hold. For any $s\in\R$, let $X_{\varepsilon}(t,s,\zeta_s^\varepsilon),t\geq s$
be the solution of the following Cauchy problem
\begin{equation*}
 \left\{
   \begin{aligned}
   &\ \d X(t)=\left(A(X(t))+F_{\varepsilon}(t,X(t))\right)\d t
   +G_{\varepsilon}(t,X(t))\d W(t)\\
  &\ X(s)=\zeta_{s}^{\varepsilon}
   \end{aligned}
   \right.
\end{equation*}
and $\bar{X}(t,s,\zeta_s),t\geq s$ be the solution of the following Cauchy problem
\begin{equation*}
 \left\{
   \begin{aligned}
   &\ \d X(t)=\left(A(X(t))+\bar{F}(X(t))\right)\d t
   +\bar{G}(X(t))\d W(t)\\
  &\ X(s)=\zeta_{s}.
   \end{aligned}
   \right.
\end{equation*}
Assume further that
$\lim\limits_{\varepsilon\rightarrow0}
 E\|\zeta^{\varepsilon}_{s}-\zeta_{s}\|^{2}=0$.
Then
$$
\lim_{\varepsilon\rightarrow0} E\sup_{s\leq t\leq s+T}
\|X_{\varepsilon}(t,s,\zeta_s^\varepsilon)-\bar{X}(t,s,\zeta_s)\|^{2}=0
$$
for any $T>0$.
\end{theorem}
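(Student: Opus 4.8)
The plan is to apply It\^o's formula to $\|Y_{\varepsilon}(t)-\bar{Y}(t)\|^{2}$ and control the resulting terms by combining the monotonicity/Lipschitz structure of (H2$'$) with the averaging conditions (G1)--(G2), the time-discretization estimates of Lemma~\ref{orstes}, and the energy bound \eqref{peq} of Lemma~\ref{pe} (which, by Remark~\ref{oaest}(ii), holds uniformly in $\varepsilon\in(0,1]$ and for $\bar{F},\bar{G}$). First I set $Z_{\varepsilon}:=Y_{\varepsilon}-\bar{Y}$, so that $Z_{\varepsilon}(s)=\zeta_{s}^{\varepsilon}-\zeta_{s}$, and write
$$\|Z_{\varepsilon}(t)\|^{2}=\|Z_{\varepsilon}(s)\|^{2}+\int_{s}^{t}\big(2{}_{V^{*}}\langle A(Y_{\varepsilon})-A(\bar{Y}),Z_{\varepsilon}\rangle_{V}+2\langle F_{\varepsilon}(\sigma,Y_{\varepsilon})-\bar{F}(\bar{Y}),Z_{\varepsilon}\rangle+\|G_{\varepsilon}(\sigma,Y_{\varepsilon})-\bar{G}(\bar{Y})\|_{L_{2}(U,H)}^{2}\big)\d\sigma+M_{\varepsilon}(t),$$
with $M_{\varepsilon}(t):=2\int_{s}^{t}\langle Z_{\varepsilon},(G_{\varepsilon}(\sigma,Y_{\varepsilon})-\bar{G}(\bar{Y}))\d W(\sigma)\rangle$. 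Monotonicity of $A$ gives ${}_{V^{*}}\langle A(Y_{\varepsilon})-A(\bar{Y}),Z_{\varepsilon}\rangle_{V}\le -\lambda\|Z_{\varepsilon}\|^{2}-\lambda'\|Z_{\varepsilon}\|^{r}\le 0$; splitting $F_{\varepsilon}(\sigma,Y_{\varepsilon})-\bar{F}(\bar{Y})$ and $G_{\varepsilon}(\sigma,Y_{\varepsilon})-\bar{G}(\bar{Y})$ as (value at $Y_{\varepsilon}$ minus value at $\bar{Y}$) plus (value at $\bar{Y}$ minus averaged value at $\bar{Y}$), the bounds of (H2$'$) produce a \emph{diagonal} contribution $\le C\int_{s}^{t}\|Z_{\varepsilon}\|^{2}\d\sigma$ together with two \emph{averaging error} terms $R_{1}(t):=2E\int_{s}^{t}\langle F_{\varepsilon}(\sigma,\bar{Y})-\bar{F}(\bar{Y}),Z_{\varepsilon}\rangle\d\sigma$ and $R_{2}(t):=CE\int_{s}^{t}\|G_{\varepsilon}(\sigma,\bar{Y})-\bar{G}(\bar{Y})\|_{L_{2}(U,H)}^{2}\d\sigma$. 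Note that, since the horizon is finite, no sign condition on $C$ is needed, which is why the extra dissipativity hypotheses of Theorems~\ref{Boundedth}--\ref{gasms} do not appear here.

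The heart of the argument is to show $R_{1},R_{2}\to 0$. I would introduce the step process $\tilde{Y}$ associated with a mesh $\delta=\delta(\varepsilon)$ and decompose, for $R_{2}$, $G_{\varepsilon}(\sigma,\bar{Y})-\bar{G}(\bar{Y})=[G_{\varepsilon}(\sigma,\bar{Y})-G_{\varepsilon}(\sigma,\tilde{Y})]+[G_{\varepsilon}(\sigma,\tilde{Y})-\bar{G}(\tilde{Y})]+[\bar{G}(\tilde{Y})-\bar{G}(\bar{Y})]$. The two outer brackets are controlled by the Lipschitz constant $L_{G}$ (for $\bar{G}$ by Remark~\ref{oaest}(ii)) and the discretization bound \eqref{avesteseq}, giving an $O(\delta^{1/2})$ contribution; on each frozen block $[s+k\delta,s+(k+1)\delta)$ the middle bracket is estimated after the substitution $u=\sigma/\varepsilon$ via (G2), yielding $\int_{s+k\delta}^{s+(k+1)\delta}\|G_{\varepsilon}(\sigma,\bar{Y}(s+k\delta))-\bar{G}(\bar{Y}(s+k\delta))\|_{L_{2}(U,H)}^{2}\d\sigma\le \delta\,\omega_{2}(\delta/\varepsilon)(1+\|\bar{Y}(s+k\delta)\|^{2})$. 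Summing over the $\approx T/\delta$ blocks and using \eqref{peq} bounds $R_{2}$ by $C_{T}(1+E\|\zeta_{s}\|^{2})\big(\omega_{2}(\delta/\varepsilon)+\delta^{1/2}\big)$.

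For $R_{1}$ the same three-term split applies to $F$, but the pairing with the moving factor $Z_{\varepsilon}$ forces a further freezing. The outer brackets are handled as before by $L_{F}$, \eqref{avesteseq} and Young's inequality, producing an $O(\delta^{1/2})$ term plus a term $\le C\int_{s}^{t}E\|Z_{\varepsilon}\|^{2}\d\sigma$ that I absorb into the Gronwall integral. For the middle bracket I add and subtract $Z_{\varepsilon}(s+k\delta)$ on each block, writing it as $\langle\int_{s+k\delta}^{s+(k+1)\delta}[F_{\varepsilon}(\sigma,\bar{Y}(s+k\delta))-\bar{F}(\bar{Y}(s+k\delta))]\d\sigma,\,Z_{\varepsilon}(s+k\delta)\rangle$ plus a remainder paired against $Z_{\varepsilon}(\sigma)-Z_{\varepsilon}(s+k\delta)$. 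The first piece is controlled by (G1) (the block integral is $\le\delta\,\omega_{1}(\delta/\varepsilon)(1+\|\bar{Y}(s+k\delta)\|)$), Cauchy--Schwarz and \eqref{peq}, giving an $\omega_{1}(\delta/\varepsilon)$-factor; the remainder uses the linear growth of $F_{\varepsilon}-\bar{F}$ (from $\|F(t,0)\|\le M_{0}$ and $L_{F}$) together with the $L^{2}$-increment bound $E\int\|Z_{\varepsilon}(\sigma)-Z_{\varepsilon}(s+k\delta)\|^{2}\d\sigma\le C_{T}\delta^{1/2}$ furnished by the computations of Lemma~\ref{orstes}, and Cauchy--Schwarz then yields an $O(\delta^{1/4})$ contribution. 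Altogether $R_{1}\le C_{T}(1+E\|\zeta_{s}\|^{2})\big(\omega_{1}(\delta/\varepsilon)+\delta^{1/4}\big)+C\int_{s}^{t}E\|Z_{\varepsilon}\|^{2}\d\sigma$.

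Finally I would take $\sup_{s\le t\le s+T}$, bound $E\sup|M_{\varepsilon}|$ by the Burkholder--Davis--Gundy inequality and absorb $\tfrac12 E\sup\|Z_{\varepsilon}\|^{2}$ to the left (the remaining part of the BDG bound splitting again into a diagonal $\int E\|Z_{\varepsilon}\|^{2}$ term and an averaging error of the $R_{2}$ type). Collecting all diagonal contributions and applying Gronwall's lemma gives
$$E\sup_{s\le t\le s+T}\|Z_{\varepsilon}(t)\|^{2}\le C_{T}\Big(E\|\zeta_{s}^{\varepsilon}-\zeta_{s}\|^{2}+(1+E\|\zeta_{s}\|^{2})\big(\omega_{1}(\delta/\varepsilon)+\omega_{2}(\delta/\varepsilon)+\delta^{1/4}\big)\Big).$$
Choosing $\delta=\delta(\varepsilon)$ with $\delta\to 0$ and $\delta/\varepsilon\to\infty$ as $\varepsilon\to 0$ (for instance $\delta=\sqrt{\varepsilon}$), the right-hand side tends to $0$ because $\omega_{1},\omega_{2}\in\Psi$ and $E\|\zeta_{s}^{\varepsilon}-\zeta_{s}\|^{2}\to 0$ by hypothesis, which is the claim. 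I expect the main obstacle to be the cross term $R_{1}$: unlike $R_{2}$, the fast-oscillating factor $F_{\varepsilon}(\cdot,\bar{Y})-\bar{F}(\bar{Y})$ is paired with the non-frozen process $Z_{\varepsilon}$, so the double-freezing (summation-by-parts) step and the correct simultaneous tuning of $\delta$ against $\varepsilon$ --- fast enough that $\omega_{i}(\delta/\varepsilon)\to 0$, yet slow enough that $\delta\to 0$ --- is the delicate point.
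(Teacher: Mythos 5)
Your proposal is correct and follows essentially the same route as the paper's proof: It\^o's formula plus monotonicity of $A$, splitting the $F$- and $G$-terms into Lipschitz diagonals and averaging errors, freezing both the oscillating factor and the test factor on blocks of size $\delta$ so that (G1)--(G2) apply, controlling the freezing errors by the discretization estimates of Lemma~\ref{orstes}, and closing with Burkholder--Davis--Gundy, Gronwall and the choice $\delta=\sqrt{\varepsilon}$. The only difference is cosmetic bookkeeping --- you split the oscillating factor first and then add/subtract $Z_{\varepsilon}(s+k\delta)$, whereas the paper first decomposes $Y_{\varepsilon}-\bar{Y}$ via the step processes $\tilde{Y}_{\varepsilon},\tilde{Y}$ and then splits the oscillating factor --- but the two decompositions produce term-by-term the same estimates.
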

\begin{proof}
Set
$X_{\varepsilon}(\sigma):=X_{\varepsilon}(\sigma,s,\zeta_s^\varepsilon)$
and $\bar{X}(\sigma):=\bar{X}(\sigma,s,\zeta_s)$ for all $\sigma\geq s$.
In view of It\^o's formula and (H2$'$), we have
\begin{align}\label{avethfeq1}
&
\|X_{\varepsilon}(t)-\bar{X}(t)\|^{2}\\\nonumber
&
=\|\zeta^{\varepsilon}_{s}-\zeta_{s}\|^{2}+\int_{s}^{t}
\bigg(2_{V^{*}}\langle
A(X_{\varepsilon}(\sigma))-A(\bar{X}(\sigma)),X_{\varepsilon}(\sigma)
-\bar{X}(\sigma)\rangle_{V}\\\nonumber
&\qquad
+2\langle F_{\varepsilon}(\sigma,X_{\varepsilon}(\sigma))
-\bar{F}(\bar{X}(\sigma)),X_{\varepsilon}(\sigma)-\bar{X}(\sigma)\rangle
+\|G_{\varepsilon}(\sigma,X_{\varepsilon}(\sigma))
-\bar{G}(\bar{X}(\sigma))\|_{L_{2}(U,H)}^{2}\bigg)\d\sigma\\\nonumber
&\quad
+2\int_{s}^{t}\langle X_{\varepsilon}(\sigma)-\bar{X}(\sigma),\left(
G_{\varepsilon}(\sigma,X_{\varepsilon}(\sigma))
-\bar{G}(\bar{X}(\sigma))\right)\d W(\sigma)\rangle\\\nonumber
&
\leq\|\zeta^{\varepsilon}_{s}-\zeta_{s}\|^{2}
+2\int_{s}^{t}\langle X_{\varepsilon}(\sigma)-\bar{X}(\sigma),\left(
G_{\varepsilon}(\sigma,X_{\varepsilon}(\sigma))
-\bar{G}(\bar{X}(\sigma))\right)\d W(\sigma)\rangle\\\nonumber
&\quad
+\int_{s}^{t}\bigg(2
\langle F_{\varepsilon}(\sigma,X_{\varepsilon}(\sigma))
-\bar{F}(\bar{X}(\sigma)),X_{\varepsilon}(\sigma)-\bar{X}(\sigma)
\rangle
+\|G_{\varepsilon}(\sigma,X_{\varepsilon}(\sigma))
-\bar{G}(\bar{X}(\sigma))\|_{L_{2}(U,H)}^{2}\bigg)\d\sigma.
\end{align}
Therefore, by Burkholder-Davis-Gundy inequality and
Young's inequality we get
\begin{align}\label{avethfeq2}
&
E\sup_{s\leq t\leq s+T}
\|X_{\varepsilon}(t)-\bar{X}(t)\|^{2}\\\nonumber
&
\leq E\|\zeta^{\varepsilon}_{s}-\zeta_{s}\|^{2}
+E\sup_{s\leq t\leq s+T}\int_{s}^{t}2\langle
F_{\varepsilon}(\sigma,X_{\varepsilon}(\sigma))
-\bar{F}(\bar{X}(\sigma)),X_{\varepsilon}(\sigma)-\bar{X}(\sigma)
\rangle\d\sigma\\\nonumber
&\quad
+E\int_{s}^{s+T}\|G_{\varepsilon}(\sigma,X_{\varepsilon}(\sigma))
-\bar{G}(\bar{X}(\sigma))\|_{L_{2}(U,H)}^{2}\d\sigma\\\nonumber
&\quad
+6E\left(\int_{s}^{s+T}\|G_{\varepsilon}(\sigma,X_{\varepsilon}(\sigma))
-\bar{G}(\bar{X}(\sigma))\|_{L_{2}(U,H)}^{2}
\|X_{\varepsilon}(\sigma)-\bar{X}(\sigma)\|^{2}\d\sigma
\right)^{\frac{1}{2}}\\\nonumber
&
\leq E\|\zeta^{\varepsilon}_{s}-\zeta_{s}\|^{2}
+E\sup_{s\leq t\leq s+T}\int_{s}^{t}2\langle
F_{\varepsilon}(\sigma,X_{\varepsilon}(\sigma))
-\bar{F}(\bar{X}(\sigma)),X_{\varepsilon}(\sigma)-\bar{X}(\sigma)
\rangle\d\sigma\\\nonumber
&\quad
+\frac{1}{2}E\sup_{s\leq t\leq s+T}
\|X_{\varepsilon}(t)-\bar{X}(t)\|^{2}+19E\int_{s}^{s+T}\|
G_{\varepsilon}(\sigma,X_{\varepsilon}(\sigma))
-\bar{G}(\bar{X}(\sigma))\|_{L_{2}(U,H)}^{2}\d\sigma.
\end{align}
Then we obtain
\begin{align}\label{avethfeq3}
&
E\sup_{s\leq t\leq s+T}
\|X_{\varepsilon}(t)-\bar{X}(t)\|^{2}\\\nonumber
&
\leq2E\|\zeta^{\varepsilon}_{s}-\zeta_{s}\|^{2}
+4E\sup_{s\leq t\leq s+T}\int_{s}^{t}\langle
F_{\varepsilon}(\sigma,X_{\varepsilon}(\sigma))
-\bar{F}(\bar{X}(\sigma)),X_{\varepsilon}(\sigma)-\bar{X}(\sigma)
\rangle\d\sigma\\\nonumber
&\quad
+38E\int_{s}^{s+T}\|G_{\varepsilon}(\sigma,X_{\varepsilon}(\sigma))
-\bar{G}(\bar{X}(\sigma))\|_{L_{2}(U,H)}^{2}\d\sigma\\\nonumber
&
=:2E\|\zeta^{\varepsilon}_{s}-\zeta_{s}\|^{2}
+\mathbb I_{1}+\mathbb I_{2}.
\end{align}

For $\mathbb I_{1}$,
\begin{align}\label{avethfeq4}
\mathbb I_{1}
&
:=4E\sup_{s\leq t\leq s+T}\int_{s}^{t}\langle
F_{\varepsilon}(\sigma,X_{\varepsilon}(\sigma))
-\bar{F}(\bar{X}(\sigma)),X_{\varepsilon}(\sigma)-\bar{X}(\sigma)
\rangle\d\sigma\\\nonumber
&
\leq4E\int_{s}^{s+T}\|F_{\varepsilon}(\sigma,X_{\varepsilon}(\sigma))
-F_{\varepsilon}(\sigma,\bar{X}(\sigma))\|\|X_{\varepsilon}(\sigma)
-\bar{X}(\sigma)\|\d\sigma\\\nonumber
&\quad
+4E\sup_{s\leq t\leq s+T}\int_{s}^{t}\langle
F_{\varepsilon}(\sigma,\bar{X}(\sigma))-\bar{F}(\bar{X}(\sigma)),
X_{\varepsilon}(\sigma)-\bar{X}(\sigma)\rangle\d\sigma\\\nonumber
&
\leq4E\int_{s}^{s+T}L_{F}\|X_{\varepsilon}(\sigma)-\bar{X}(\sigma)\|^{2}
\d\sigma\\\nonumber
&\quad
+4E\sup_{s\leq t\leq s+T}\int_{s}^{t}\langle
F_{\varepsilon}(\sigma,\bar{X}(\sigma))-\bar{F}(\bar{X}(\sigma)),
X_{\varepsilon}(\sigma)-\tilde{X}_{\varepsilon}(\sigma)
\rangle\d\sigma\\\nonumber
&\quad
+4E\sup_{s\leq t\leq s+T}\int_{s}^{t}\langle
F_{\varepsilon}(\sigma,\bar{X}(\sigma))-\bar{F}(\bar{X}(\sigma)),
\tilde{X}_{\varepsilon}(\sigma)-\tilde{X}(\sigma)
\rangle\d\sigma\\\nonumber
&\quad
+4E\sup_{s\leq t\leq s+T}\int_{s}^{t}\langle
F_{\varepsilon}(\sigma,\bar{X}(\sigma))-\bar{F}(\bar{X}(\sigma)),
\tilde{X}(\sigma)-\bar{X}(\sigma)\rangle\d\sigma\\\nonumber
&
\leq4E\int_{s}^{s+T}L_{F}\|X_{\varepsilon}(\sigma)-\bar{X}(\sigma)\|^{2}
\d\sigma+\mathbb I_{1}^{2}+\mathbb I_{1}^{3}
+\mathbb I_{1}^{4}.\\\nonumber
\end{align}
For $\mathbb I_{1}^{2}$, by (H2$'$), H\"older's inequality,
\eqref{peq}, Remark \ref{FGunirm} and \eqref{orsteseq}  we have
\begin{align}\label{avethfeq5}
\mathbb I_{1}^{2}
&
:=4E\sup_{s\leq t\leq s+T}\int_{s}^{t}\langle
F_{\varepsilon}(\sigma,\bar{X}(\sigma))-\bar{F}(\bar{X}(\sigma)),
X_{\varepsilon}(\sigma)-\tilde{X}_{\varepsilon}(\sigma)
\rangle\d\sigma\\\nonumber
&
\leq4E\int_{s}^{s+T}\|F_{\varepsilon}(\sigma,\bar{X}(\sigma))
-\bar{F}(\bar{X}(\sigma))\|\|X_{\varepsilon}(\sigma)
-\tilde{X}_{\varepsilon}(\sigma)\|\d\sigma\\\nonumber
&
\leq4\left(E\int_{s}^{s+T}\left(2L_{F}\|\bar{X}(\sigma)\|
+2M_{0}\right)^{2}\d\sigma\right)^{\frac{1}{2}}\left(E\int_{s}^{s+T}
\|X_{\varepsilon}(\sigma)-\tilde{X}_{\varepsilon}(\sigma)\|^{2}
\d\sigma\right)^{\frac{1}{2}}\\\nonumber
&
\leq C_{T}\left(1+E\|\zeta_{s}\|^{2}\right)\delta^{\frac{1}{4}}.
\end{align}
For $\mathbb I_{1}^{4}$,
similar to $\mathbb I_{1}^{2}$, employing (H2$'$), H\"older's inequality,
\eqref{peq}, Remark \ref{FGunirm} and \eqref{avesteseq} we get
\begin{align}\label{avethfeq6}
\mathbb I_{1}^{4}
&
:=4E\sup_{s\leq t\leq s+T}\int_{s}^{t}\langle
F_{\varepsilon}(\sigma,\bar{X}(\sigma))-\bar{F}(\bar{X}(\sigma)),
\tilde{X}(\sigma)-\bar{X}(\sigma)\rangle\d\sigma\\\nonumber
&
\leq C_{T}\left(1+E\|\zeta_{s}\|^{2}\right)\delta^{\frac{1}{4}}.
\end{align}
For $\mathbb I_{1}^{3}$, in view of (H2$'$), H\"older's inequality,
\eqref{peq} and Remark \ref{FGunirm}, we have
\begin{align}\label{avethfeq7}
\mathbb I_{1}^{3}
&
:=4E\sup_{s\leq t\leq s+T}\int_{s}^{t}\langle
F_{\varepsilon}(\sigma,\bar{X}(\sigma))-\bar{F}(\bar{X}(\sigma)),
\tilde{X}_{\varepsilon}(\sigma)-\tilde{X}(\sigma)
\rangle\d\sigma\\\nonumber
&
=4E\sup_{s\leq t\leq s+T}\int_{s}^{t}\langle
F_{\varepsilon}(\sigma,\bar{X}(\sigma))
-F_{\varepsilon}(\sigma,\tilde{X}(\sigma)),
\tilde{X}_{\varepsilon}(\sigma)-\tilde{X}(\sigma)
\rangle\d\sigma\\\nonumber
&\quad
+4E\sup_{s\leq t\leq s+T}\int_{s}^{t}\langle
F_{\varepsilon}(\sigma,\tilde{X}(\sigma))-\bar{F}(\tilde{X}(\sigma)),
\tilde{X}_{\varepsilon}(\sigma)-\tilde{X}(\sigma)
\rangle\d\sigma\\\nonumber
&\quad
+4E\sup_{s\leq t\leq s+T}\int_{s}^{t}\langle
\bar{F}(\tilde{X}(\sigma))-\bar{F}(\bar{X}(\sigma)),
\tilde{X}_{\varepsilon}(\sigma)-\tilde{X}(\sigma)
\rangle\d\sigma\\\nonumber
&
\leq4E\int_{s}^{s+T}\|F_{\varepsilon}(\sigma,\bar{X}(\sigma))
-F_{\varepsilon}(\sigma,\tilde{X}(\sigma))\|
\|\tilde{X}_{\varepsilon}(\sigma)-\tilde{X}(\sigma)\|\d\sigma\\\nonumber
&\quad
+4E\sup_{s\leq t\leq s+T}\int_{s}^{t}\langle
F_{\varepsilon}(\sigma,\tilde{X}(\sigma))-\bar{F}(\tilde{X}(\sigma)),
\tilde{X}_{\varepsilon}(\sigma)-\tilde{X}(\sigma)
\rangle\d\sigma\\\nonumber
&\quad
+4E\int_{s}^{s+T}\|\bar{F}(\tilde{X}(\sigma))-\bar{F}(\bar{X}(\sigma))\|
\|\tilde{X}_{\varepsilon}(\sigma)-\tilde{X}(\sigma)\|\d\sigma\\\nonumber
&
\leq8E\int_{s}^{s+T}L_{F}\|\tilde{X}(\sigma)-\bar{X}(\sigma)\|
\|\tilde{X}_{\varepsilon}(\sigma)-\tilde{X}(\sigma)\|\d\sigma\\\nonumber
&\quad
+4E\sup_{s\leq t\leq s+T}\int_{s}^{t}\langle
F_{\varepsilon}(\sigma,\tilde{X}(\sigma))-\bar{F}(\tilde{X}(\sigma)),
\tilde{X}_{\varepsilon}(\sigma)-\tilde{X}(\sigma)
\rangle\d\sigma\\\nonumber
&
\leq8L_{F}\left(E\int_{s}^{s+T}\|\tilde{X}(\sigma)-\bar{X}(\sigma)\|^{2}
\d\sigma\right)^{\frac{1}{2}}\left(E\int_{s}^{s+T}
\|\tilde{X}_{\varepsilon}(\sigma)-\tilde{X}(\sigma)\|^{2}\d\sigma
\right)^{\frac{1}{2}}\\\nonumber
&\quad
+4E\sup_{s\leq t\leq s+T}\int_{s}^{t}\langle
F_{\varepsilon}(\sigma,\tilde{X}(\sigma))-\bar{F}(\tilde{X}(\sigma)),
\tilde{X}_{\varepsilon}(\sigma)-\tilde{X}(\sigma)
\rangle\d\sigma\\\nonumber
&
\leq C_{T}\left(1+E\|\zeta_{s}\|^{2}\right)\delta^{\frac{1}{4}}
+\mathbb I_{1}^{3,2}.
\end{align}

Define $t(s,\delta):=s+\left[\frac{t-s}{\delta}\right]\delta$,
where $\left[\frac{t-s}{\delta}\right]$ is the integer part of
$\frac{t-s}{\delta}$. Now we estimate
$\mathbb I_{1}^{3,2}:=4E\sup\limits_{s\leq t\leq s+T}
\int_{s}^{t}\langle F_{\varepsilon}(\sigma,\tilde{X}(\sigma))
-\bar{F}(\tilde{X}(\sigma)),
\tilde{X}_{\varepsilon}(\sigma)-\tilde{X}(\sigma)\rangle\d\sigma$ by
(H2$'$), (G1), \eqref{peq} and Remark \ref{FGunirm}:
\begin{align}\label{avethfeq8}
&
4E\sup_{s\leq t\leq s+T}\int_{s}^{t}\langle
F_{\varepsilon}(\sigma,\tilde{X}(\sigma))-\bar{F}(\tilde{X}(\sigma)),
\tilde{X}_{\varepsilon}(\sigma)-\tilde{X}(\sigma)
\rangle\d\sigma\\\nonumber
&
=4E\sup_{s\leq t\leq s+T}\Bigg\{\sum_{k=0}^{\left[\frac{t-s}{\delta}\right]-1}
\int_{s^{k\delta}}^{s^{(k+1)\delta}}\langle
F_{\varepsilon}(\sigma,\bar{X}(s^{k\delta}))-\bar{F}(\bar{X}(s^{k\delta})),
X_{\varepsilon}(s^{k\delta})-\bar{X}(s^{k\delta})
\rangle\d\sigma\\\nonumber
&\quad
+\int_{t(s,\delta)}^{t}\langle
F_{\varepsilon}(\sigma,\bar{X}(t(s,\delta)))
-\bar{F}(\bar{X}(t(s,\delta))),
X_{\varepsilon}(t(s,\delta))
-\bar{X}(t(s,\delta))
\rangle\d\sigma\Bigg\}\\\nonumber
&
\leq4E\sup_{s\leq t\leq s+T}\Bigg\{
\sum_{k=0}^{\left[\frac{t-s}{\delta}\right]-1}\left\langle
\int_{s^{k\delta}}^{s^{(k+1)\delta}}\left(F_{\varepsilon}(\sigma,
\bar{X}(s^{k\delta}))-\bar{F}(\bar{X}(s^{k\delta}))\right)\d\sigma,
X_{\varepsilon}(s^{k\delta})-\bar{X}(s^{k\delta})\right\rangle\\\nonumber
&\quad
+\int_{t(s,\delta)}^{t}\|
F_{\varepsilon}(\sigma,\bar{X}(t(s,\delta)))
-\bar{F}(\bar{X}(t(s,\delta)))\|
\|X_{\varepsilon}(t(s,\delta))
-\bar{X}(t(s,\delta))\|\d\sigma\Bigg\}\\\nonumber
&
\leq4E\sup_{s\leq t\leq s+T}\Bigg\{\sum_{k=0}
^{\left[\frac{t-s}{\delta}\right]-1}\left\|\int_{s^{k\delta}}^{s^{(k+1)\delta}}
\left(F_{\varepsilon}(\sigma,\bar{X}(s^{k\delta}))
-\bar{F}(\bar{X}(s^{k\delta}))\right)\d\sigma\right\|\\\nonumber
&\qquad
\times
\|X_{\varepsilon}(s^{k\delta})-\bar{X}(s^{k\delta})\|\Bigg\}
+C_{T}\left(1+E\|\zeta_{s}\|^{2}\right)\delta\\\nonumber
&
\leq\frac{4T}{\delta}\max_{0\leq k\leq T(\delta)-1}\left(E
\left\|\int_{s^{k\delta}}^{s^{(k+1)\delta}}\left(
F_{\varepsilon}(\sigma,\bar{X}(s^{k\delta}))
-\bar{F}(\bar{X}(s^{k\delta}))\right)\d\sigma\right\|^{2}\right)^{\frac{1}{2}}
C_{T}\left(1+E\|\zeta_{s}\|^{2}\right)\\\nonumber
&\quad
+C_{T}\left(1+E\|\zeta_{s}\|^{2}\right)\delta\\\nonumber
&
\leq\frac{4T}{\delta}\max_{0\leq k\leq T(\delta)-1}\delta
\omega_{1}\left(\frac{\delta}{\varepsilon}\right)\left(E\left(1
+\|\bar{X}(s^{k\delta})\|\right)^{2}\right)^{\frac{1}{2}}
C_{T}\left(1+E\|\zeta_{s}\|^{2}\right)
+C_{T}\left(1+E\|\zeta_{s}\|^{2}\right)\delta\\\nonumber
&
\leq C_{T}\left(1+E\|\zeta_{s}\|^{2}\right)\left(
\omega_{1}\left(\frac{\delta}{\varepsilon}\right)+\delta\right).
\end{align}
Combining \eqref{avethfeq7} and \eqref{avethfeq8}, we deduce
\begin{equation}\label{avethfeq9}
\mathbb I_{1}^{3}\leq C_{T}\left(1+E\|\zeta_{s}\|^{2}\right)\left(
\delta^{\frac{1}{4}}+\delta+
\omega_{1}\left(\frac{\delta}{\varepsilon}\right)\right).
\end{equation}
Therefore, \eqref{avethfeq4}--\eqref{avethfeq6} and \eqref{avethfeq9}
yield
\begin{equation}\label{avethfeq10}
\mathbb I_{1}
\leq4L_{F}\int_{s}^{s+T}E\sup_{s\leq u\leq\sigma}\|X_{\varepsilon}(u)
-\bar{X}(u)\|^{2}\d\sigma+C_{T}\left(1+E\|\zeta_{s}\|^{2}\right)
\left(\delta^{\frac{1}{4}}+\delta+
\omega_{1}\left(\frac{\delta}{\varepsilon}\right)\right).
\end{equation}

Now we estimate $\mathbb I_{2}$.
\begin{align}\label{avethfeq11}
\mathbb I_{2}
&
:=38E\int_{s}^{s+T}\|G_{\varepsilon}(\sigma,X_{\varepsilon}(\sigma))
-\bar{G}(\bar{X}(\sigma))\|_{L_{2}(U,H)}^{2}\d\sigma\\\nonumber
&
\leq76E\int_{s}^{s+T}\|G_{\varepsilon}(\sigma,X_{\varepsilon}(\sigma))
-G_{\varepsilon}(\sigma,\bar{X}(\sigma))\|_{L_{2}(U,H)}^{2}
\d\sigma\\\nonumber
&\quad
+76E\int_{s}^{s+T}\|G_{\varepsilon}(\sigma,\bar{X}(\sigma))
-\bar{G}(\bar{X}(\sigma))\|_{L_{2}(U,H)}^{2}\d\sigma\\\nonumber
&
\leq76L_{G}^{2}E\int_{s}^{s+T}\|X_{\varepsilon}(\sigma)
-\bar{X}(\sigma)\|^{2}\d\sigma\\\nonumber
&\quad
+76E\int_{s}^{s+T}\|G_{\varepsilon}(\sigma,\bar{X}(\sigma))
-\bar{G}(\bar{X}(\sigma))\|_{L_{2}(U,H)}^{2}\d\sigma\\\nonumber
&
\leq76L_{G}^{2}\int_{s}^{s+T}E\sup_{s\leq u\leq\sigma}
\|X_{\varepsilon}(u)-\bar{X}(u)\|^{2}\d\sigma+\mathbb I_{2}^{2}.
\end{align}
Denote by $T(\delta):=\left[\frac{T}{\delta}\right]$.
For $\mathbb I_{2}^{2}$, it follows from (H2$'$), (G1),
\eqref{avesteseq}, \eqref{peq} and Remark \ref{FGunirm} that
\begin{align}\label{avethfeq12}
\mathbb I_{2}^{2}
&
:=76E\int_{s}^{s+T}\|G_{\varepsilon}(\sigma,\bar{X}(\sigma))
-\bar{G}(\bar{X}(\sigma))\|_{L_{2}(U,H)}^{2}\d\sigma\\\nonumber
&
\leq228E\int_{s}^{s+T}\|G_{\varepsilon}(\sigma,\bar{X}(\sigma))
-G_{\varepsilon}(\sigma,\tilde{X}(\sigma))\|_{L_{2}(U,H)}^{2}
\d\sigma\\\nonumber
&\quad
+228E\int_{s}^{s+T}\|G_{\varepsilon}(\sigma,\tilde{X}(\sigma))
-\bar{G}(\tilde{X}(\sigma))\|_{L_{2}(U,H)}^{2}\d\sigma\\\nonumber
&\quad
+228E\int_{s}^{s+T}\|\bar{G}(\tilde{X}(\sigma))
-\bar{G}(\bar{X}(\sigma))\|_{L_{2}(U,H)}^{2}\d\sigma\\\nonumber
&
\leq456E\int_{s}^{s+T}L_{G}^{2}\|\bar{X}(\sigma)-\tilde{X}(\sigma)\|^{2}
\d\sigma+228E\int_{s}^{s+T}\|G_{\varepsilon}(\sigma,\tilde{X}(\sigma))
-\bar{G}(\tilde{X}(\sigma))\|_{L_{2}(U,H)}^{2}\d\sigma\\\nonumber
&
\leq C_{T}\left(1+E\|\zeta_{s}\|^{2}\right)\delta^{\frac{1}{2}}
+228E\sum_{k=0}^{T(\delta)-1}\int_{s^{k\delta}}^{s^{(k+1)\delta}}
\|G_{\varepsilon}(\sigma,\bar{X}(s^{k\delta})
-\bar{G}(\bar{X}(s^{k\delta})\|_{L_{2}(U,H)}^{2}\d\sigma\\\nonumber
&\quad
+228E\int_{s+T(\delta)\delta}^{s+T}
\|G_{\varepsilon}(\sigma,\bar{X}(s+T(\delta)\delta))
-\bar{G}(\bar{X}(s+T(\delta)\delta))\|_{L_{2}(U,H)}^{2}\d\sigma\\\nonumber
&
\leq C_{T}\left(1+E\|\zeta_{s}\|^{2}\right)\delta^{\frac{1}{2}}+228
\sum_{k=0}^{T(\delta)-1}\delta\omega_{2}\left(\frac{\delta}{\varepsilon}
\right)E\left(1+\|\bar{X}(s+k\delta)\|^{2}\right)\\\nonumber
&\quad
+CE\int_{s+T(\delta)\delta}^{s+T}\left(
\|\bar{X}(s+T(\delta)\delta)\|^{2}+1\right)\d\sigma\\\nonumber
&
\leq C_{T}\left(1+E\|\zeta_{s}\|^{2}\right)\left(\delta^{\frac{1}{2}}
+\delta+\omega_{2}\left(\frac{\delta}{\varepsilon}\right)\right).
\end{align}
Therefore,
\begin{align}\label{avethfeq13}
\mathbb I_{2}\leq 76L_{G}^{2}\int_{s}^{s+T}E\sup_{s\leq u\leq\sigma}
\|X_{\varepsilon}(u)-\bar{X}(u)\|^{2}\d\sigma
+C_{T}\left(1+E\|\zeta_{s}\|^{2}\right)\left(\delta^{\frac{1}{2}}
+\delta+\omega_{2}\left(\frac{\delta}{\varepsilon}\right)\right).
\end{align}

Combining \eqref{avethfeq3}, \eqref{avethfeq10} and
\eqref{avethfeq13}, we get
\begin{align}\label{avethfeq14}
&
E\sup_{s\leq t\leq s+T}
\|X_{\varepsilon}(t)-\bar{X}(t)\|^{2}\\\nonumber
&
\leq2E\|\zeta^{\varepsilon}_{s}-\zeta_{s}\|^{2}
+\left(4L_{F}+76L_{G}^{2}\right)\int_{s}^{s+T}
E\sup_{s\leq u\leq \sigma}\|X_{\varepsilon}(u)-\bar{X}(u)\|^{2}\d
\sigma\\\nonumber
&\quad
+C_{T}\left(1+E\|\zeta_{s}\|^{2}\right)\left(\delta^{\frac{1}{4}}
+\omega_{1}\left(\frac{\delta}{\varepsilon}\right)
+\omega_{2}\left(\frac{\delta}{\varepsilon}\right)\right).
\end{align}
It follows from Gronwall's lemma that
\begin{align}\label{avetheq015}
E\sup_{s\leq t\leq s+T}
\|X_{\varepsilon}(t)-\bar{X}(t)\|^{2}
&
\leq\bigg[2E\|\zeta^{\varepsilon}_{s}-\zeta_{s}\|^{2}
+C_{T}\left(1+E\|\zeta_{s}\|^{2}\right)\Big(\delta^{\frac{1}{4}}
+\omega_{1}\left(\frac{\delta}{\varepsilon}\right)\\\nonumber
&\qquad
+\omega_{2}\left(\frac{\delta}{\varepsilon}\right)\Big)\bigg]
\exp\left\{\left(4L_{F}+76L_{G}^{2}\right)T\right\},
\end{align}
which implies
\begin{align}\label{avethfeq15}
E\sup_{s\leq t\leq s+T}
\|X_{\varepsilon}(t)-\bar{X}(t)\|^{2}\leq C_T
\left(E\|\zeta^{\varepsilon}_{s}-\zeta_{s}\|^{2}
+\varepsilon^{\frac18}
+\omega_{1}\left(\frac{1}{\sqrt{\varepsilon}}\right)
+\omega_{2}\left(\frac{1}{\sqrt{\varepsilon}}\right)\right)
\end{align}
provided $\delta=\sqrt{\varepsilon}$.
Letting
$\varepsilon\rightarrow0$, we obtain
\[
\lim_{\varepsilon\rightarrow0}E\sup_{s\leq t\leq s+T}
\|X_{\varepsilon}(t)-\bar{X}(t)\|^{2}=0.
\]
\end{proof}

\begin{remark}\rm
\begin{enumerate}
  \item For simplicity, we take $\delta=\sqrt{\eps}$ in
   \eqref{avetheq015} to obtain \eqref{avethfeq15},
  which gives a convergence rate for the first Bogolyubov theorem.
  If we take $\delta=\psi(\eps)$ satisfying $\psi(\eps)\rightarrow0$
  and $\frac{\psi(\eps)}{\eps}\rightarrow\infty$ as $\eps\rightarrow0$ such that
  $\delta^{\frac14}=\omega_1(\frac{\delta}{\eps})=\omega_2(\frac{\delta}{\eps})$,
  then we obtain a better convergence rate. But we are not sure if our method can give
  the optimal rate.
  \item As mentioned in Introduction, there are three types of
averaging principle and most existing works (except for \cite{CL2020,KMF_2015})
on {\em stochastic} averaging focus on the first Bogolyubov theorem.
But to the best of our knowledge, the above result is new and hence interesting on its own rights.
Meanwhile, it is helpful for us to establish the second
Bogolyubov theorem and global averaging principle in what follows.
\end{enumerate}
\end{remark}

With the help of Theorem \ref{avethf}, we can now establish
the second Bogolyubov theorem.
\begin{theorem}\label{averth}
Suppose that conditions {\rm{(G1)--(G2)}}, {\rm{(H1)}},
{\rm{(H2$'$)}} and {\rm{(H3)--(H6)}} hold. Assume further that
$2\lambda-2\lambda_{F}-L_{G}^{2}\geq0$. If $\lambda'>0$
or $2\lambda-2\lambda_{F}-L_{G}^{2}>0$
then for any $0<\varepsilon \le 1$
\begin{enumerate}
\item equation \eqref{eqG2.1} has a unique solution
      $X_{\varepsilon}\in C_{b}(\mathbb R,L^{2}
      (\Omega,\mathbb P;H))$;
\item the $L^{2}$-bounded solution $X_{\varepsilon}$ of
      \eqref{eqG2.1} is strongly compatible in distribution, i.e.
      $\mathfrak M_{(F_{\varepsilon},G_{\varepsilon})}\subseteq
      \tilde{\mathfrak M}_{X_{\varepsilon}}$, and
      \[
      \lim_{\varepsilon\rightarrow0}d_{BL}(\mathcal L
      (X_{\varepsilon}),\mathcal L(\bar{X}))=0 \qquad
      {\rm in}~Pr(C(\R,H)),
      \]
      where $\bar{X}$ is the unique stationary solution of
      averaged equation \eqref{eqG5_1}.
\end{enumerate}
\end{theorem}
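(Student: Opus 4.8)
The plan is to regard \eqref{eqG2.1} as an instance of the abstract equation \eqref{eqSPDE1} whose time-dependent part is the pair $(A_2+F_\varepsilon,G_\varepsilon)$, the time-independent operator $A_1$ being unchanged. First I would check that this combined operator satisfies \mbox{(H1)--(H6)}: hemicontinuity and continuity are inherited from those of $A$, $F$, $G$; coercivity and boundedness follow from (H3)--(H4) together with the linear growth $\|F(t,x)\|\le L_F\|x\|+M_0$ encoded in (H2$'$); and, crucially, adding the two monotonicity estimates in (H2$'$) gives $_{V^*}\langle (A+F_\varepsilon)(t,u)-(A+F_\varepsilon)(t,v),u-v\rangle_V\le-(\lambda-\lambda_F)\|u-v\|^2-\lambda'\|u-v\|^r$, so the effective strong-monotonicity constant is $\tilde\lambda:=\lambda-\lambda_F$. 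Since $2\lambda-2\lambda_F-L_G^2\ge0$ reads $2\tilde\lambda-L_G^2\ge0$ (and likewise for the strict versions), Theorems \ref{Boundedth}, \ref{gasms} and \ref{SCth} apply verbatim to \eqref{eqG2.1}. This yields assertion (i): a unique $L^2$-bounded solution $X_\varepsilon\in C_b(\mathbb R,L^2(\Omega,\mathbb P;H))$, globally asymptotically stable in the mean-square sense, with the stability estimate \eqref{gaseq} holding uniformly in $\varepsilon\in(0,1]$ by Remark \ref{oaest}(ii). The strong compatibility $\mathfrak M_{(F_\varepsilon,G_\varepsilon)}\subseteq\tilde{\mathfrak M}_{X_\varepsilon}$ in the first half of (ii) is exactly the conclusion of Theorem \ref{SCth} (the autonomous summand $A_2$ does not affect the character of recurrence). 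The same reasoning applies to \eqref{eqG5_1}: since $\bar F,\bar G$ satisfy (H2$'$),(H4) with the same constants (Remark \ref{oaest}(ii)), \eqref{eqG5_1} has a unique $L^2$-bounded solution, and because it is autonomous this solution is stationary; this is the $\bar X$ in the statement.

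It remains to prove $d_{BL}(\mathcal L(X_\varepsilon),\mathcal L(\bar X))\to0$ in $Pr(C(\mathbb R,H))$. Since the compact-open topology makes this equivalent to convergence of the laws of the restrictions $X_\varepsilon|_{[a,b]}$ on every compact interval $[a,b]$, I would fix $[a,b]$ and $\eta>0$ and, for an integer $N>-a$, introduce the auxiliary process $\bar X^{(N)}(\cdot):=\bar X(\cdot,-N,X_\varepsilon(-N))$, i.e.\ the solution of the averaged equation \eqref{eqG5_1} on $[-N,\infty)$ driven by the same $W$ and started at time $-N$ from the (same) random value $X_\varepsilon(-N)$. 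Using that $d_{BL}(\mathcal L(U),\mathcal L(V))\le\big(E\sup_{a\le t\le b}\|U(t)-V(t)\|^2\big)^{1/2}$ for processes on $[a,b]$, I split
\[
d_{BL}\big(\mathcal L(X_\varepsilon|_{[a,b]}),\mathcal L(\bar X|_{[a,b]})\big)\le\Big(E\sup_{a\le t\le b}\|X_\varepsilon(t)-\bar X^{(N)}(t)\|^2\Big)^{\frac12}+\Big(E\sup_{a\le t\le b}\|\bar X^{(N)}(t)-\bar X(t)\|^2\Big)^{\frac12}=:P_1+P_2.
\]

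For $P_1$ I would invoke the first Bogolyubov theorem, Theorem \ref{avethf}, with $s=-N$ and the common initial datum $\zeta^\varepsilon_s=\zeta_s=X_\varepsilon(-N)$ (so the initial discrepancy vanishes); since $Y_\varepsilon=X_\varepsilon$ and $\bar Y=\bar X^{(N)}$ on $[-N,\infty)$ by uniqueness, and $\sup_\varepsilon E\|X_\varepsilon(-N)\|^2<\infty$ keeps the constant in the bound \eqref{avethfeq15} (established in the proof of Theorem \ref{avethf}) uniform in $\varepsilon$, one gets $E\sup_{-N\le t\le b}\|X_\varepsilon(t)-\bar X^{(N)}(t)\|^2\to0$ as $\varepsilon\to0$ for each fixed $N$, whence $P_1\to0$. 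For $P_2$, both $\bar X^{(N)}$ and $\bar X$ solve the same averaged equation, so $D:=\bar X^{(N)}-\bar X$ has, by It\^o's formula and the combined monotonicity $2\tilde\lambda-L_G^2\ge0$, a nonpositive drift; a Burkholder--Davis--Gundy estimate exactly as in Lemma \ref{pe} then yields $E\sup_{a\le t\le b}\|D(t)\|^2\le C_{[a,b]}E\|D(a)\|^2$, and $E\|D(a)\|^2$ is controlled by the uniform stability estimate \eqref{gaseq} (applied to the averaged equation on $[-N,a]$), giving, uniformly in $\varepsilon$, $E\|D(a)\|^2\le\{\lambda'(r-2)(a+N)\}^{-2/(r-2)}$ when $\lambda'>0$ and $E\|D(a)\|^2\le e^{-(2\lambda-2\lambda_F-L_G^2)(a+N)}\,E\|X_\varepsilon(-N)-\bar X(-N)\|^2$ otherwise. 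In both cases $P_2\to0$ as $N\to\infty$, uniformly in $\varepsilon$. Hence I first pick $N$ so large that $P_2<\eta/2$ for all $\varepsilon$, and then $\varepsilon_0$ so small that $P_1<\eta/2$ for $\varepsilon<\varepsilon_0$; this proves convergence on $[a,b]$, and therefore in $Pr(C(\mathbb R,H))$.

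The main obstacle is the interplay between the two limiting parameters: the finite-horizon averaging of Theorem \ref{avethf} controls $P_1$ only for a fixed starting time $-N$, with a constant that blows up as $N\to\infty$, whereas the error $P_2$ from moving the starting time back to $-\infty$ must be made small \emph{before} sending $\varepsilon\to0$. Reconciling the two hinges on the uniform-in-$\varepsilon$ asymptotic stability of both the original and the averaged equations (Remark \ref{oaest}(ii) together with \eqref{gaseq}) and on the correct order of quantifiers, $N$ first and then $\varepsilon$. A secondary technical point, needed to pass from marginal convergence in $Pr(H)$ to path-space convergence in $Pr(C(\mathbb R,H))$, is upgrading the mean-square control at the single time $a$ to a supremum over $[a,b]$; this is precisely what the BDG-type bound $E\sup_{[a,b]}\|D\|^2\le C_{[a,b]}E\|D(a)\|^2$ for the averaged equation supplies.
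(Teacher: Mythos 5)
Your proposal is correct, and for the core assertion --- $\lim_{\varepsilon\to0}d_{BL}(\mathcal L(X_{\varepsilon}),\mathcal L(\bar{X}))=0$ --- it takes a genuinely different route from the paper. Parts (i) and the compatibility half of (ii) coincide with the paper's treatment (Theorems \ref{Boundedth} and \ref{SCth}, after absorbing $F_\varepsilon$ into the monotone drift with effective constant $\lambda-\lambda_F$). For the convergence, however, the paper argues by compactness: it uses (H6) and Proposition \ref{tightprop} to get tightness of $\{\mathcal L(X_{\varepsilon}(t))\}$, extracts weakly convergent subsequences of the laws at times $-r$, upgrades them to $L^2$-convergent initial data via the Skorohod representation theorem and the Vitali criterion (using the $2p$-moment bound \eqref{ineq2}), applies Theorem \ref{avethf} on $[-r,+\infty)$, identifies the limit law as $\mathcal L(\bar X)$ through the flow-property/uniqueness characterization of Theorem \ref{Boundedth} plus a diagonal argument, and finally passes to path space by another Skorohod argument on $[a,b]$. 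You instead couple everything on the original probability space: the auxiliary process $\bar X^{(N)}=\bar X(\cdot,-N,X_\varepsilon(-N))$ splits the error into a finite-horizon averaging term $P_1$ and a stability term $P_2$, with the quantifier order ``first $N$, then $\varepsilon$'' handled correctly. Two points make your argument rigorous and you address both: since Theorem \ref{avethf} as stated assumes a fixed limit initial datum while yours depends on $\varepsilon$, you correctly invoke the quantitative bound \eqref{avethfeq15} (uniform in $\varepsilon$ because $\sup_{t,\varepsilon}E\|X_\varepsilon(t)\|^{2}<\infty$) rather than the theorem's qualitative statement; and you supply the BDG step upgrading $E\|D(a)\|^{2}$ to $E\sup_{[a,b]}\|D\|^{2}$, which indeed works since the drift of $\|D\|^{2}$ is nonpositive under $2\lambda-2\lambda_F-L_G^{2}\ge 0$. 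Comparing the two: your route is more elementary and quantitative --- no tightness (hence (H6) is not needed for this step), no Skorohod, no subsequence extraction, and in principle an explicit convergence rate in terms of $\omega_1,\omega_2$ and the decay in \eqref{gaseq}; the paper's softer route avoids any coupling construction, working purely with laws and the uniqueness of $L^{2}$-bounded measure flows (the same mechanism as in Theorem \ref{SCth}), at the cost of heavier probabilistic machinery. Both proofs ultimately rest on the same two pillars: the first Bogolyubov theorem \ref{avethf} and uniform-in-$\varepsilon$ asymptotic stability.
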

\begin{proof}
(i) follows from Theorem \ref{Boundedth}.

(ii)
By Theorem \ref{SCth} the bounded solution $X_{\varepsilon}$ of
equation \eqref{eqG2.1} is strongly compatible in distribution,
i.e. $\mathfrak M_{(F_{\varepsilon},G_{\varepsilon})}\subseteq
\tilde{\mathfrak M}_{X_{\varepsilon}}$, for any
$0<\varepsilon\leq1$.

Now we prove that
$\lim\limits_{\varepsilon\rightarrow0}d_{BL}(\L(X_{\varepsilon}(t)),
\L(\bar{X}(t)))=0$ in $Pr(H)$ for any $t\in\R$. Take a sequence
$\{\varepsilon_{n}\}_{n=1}^{\infty}\subset(0,1]$ such that
$\varepsilon_{n}\rightarrow0$ as $n\rightarrow\infty$.
Similar to Proposition \ref{tightprop}, we have
$\sup\limits_{t\in\R,\varepsilon\in(0,1]} E
\|X_{\varepsilon}(t)\|^{2}_{S}<\infty$. It follows from Chebychev's
inequality and the compact imbedding $S\subset H$ that
$\{\mathcal L(X_{\varepsilon_{n}}(t))\}_{n=1}^{\infty}$
is tight for all $t\in\R$. For every $r\geq1$,
according to the tightness of
$\{\mathcal L(X_{\varepsilon_{n}}(-r))\}_{n=1}^{\infty}$,
there exists a subsequence
$\{\varepsilon_{n_{k}}\}\subset\{\varepsilon_{n}\}$ such that
$\mathcal L\left(X_{\varepsilon_{n_{k}}}(-r)\right)$
weakly converges to $\mu_{r}$
in $Pr(H)$. Due to the Skorohod representation theorem,
there exists a sequence of random
variables $\hat{\psi}^{k}(-r)$ and $\hat{\zeta_{r}}$
with laws of $\L\left(X_{\varepsilon_{n_{k}}}(-r)\right)$ and $\mu_{r}$
respectively, defined on another probability space
$(\hat{\Omega},\hat{\mathcal F},\hat{\mathbb P})$, such that
$\hat{\psi}^{k}(-r)\rightarrow\hat{\zeta}_{r}$
$\hat{\mathbb P}-{\rm{a.s.}}$
It follows from \eqref{ineq2} and Remark \ref{FGunirm}
that there exists $p>1$ such that
\begin{equation*}
\hat{ E}\left\|\hat{\psi}^{k}(-r)\right\|^{2p}
=\int_{H}\|x\|^{2p}\mathcal L(\hat{\psi}^{k}(-r))(\d x)
=\int_{H}\|x\|^{2p}
\mathcal L(X_{\varepsilon_{n_{k}}}(-r))(\d x)<\infty.
\end{equation*}
By the Vitali $L^{P}$ convergence criterion, we have
$\lim\limits_{k\rightarrow\infty}\hat{E}
\left\|\hat{\psi}^{k}(-r)-\hat{\zeta}_{r}\right\|^{2}=0$.

Let $\hat{\psi}^{k}$ be the solution of the following Cauchy problem
\begin{equation*}
\left\{
\begin{aligned}
&
\d X(t)=\left(A(X(t))+F_{\varepsilon_{n_{k}}}(t,X(t))\right)\d t
+G_{\varepsilon_{n_{k}}}(t,X(t))\d \hat{W}(t)\\
&
X(-r)=\hat{\psi}^{k}(-r)
\end{aligned}
\right.
\end{equation*}
and $\hat{Y}_{r}$ be the solution of the following Cauchy problem
\begin{equation*}
\left\{
\begin{aligned}
&
\d X(t)=\left(A(X(t))+\bar{F}(X(t))\right)\d t
+\bar{G}(X(t))\d \hat{W}(t)\\
&
X(-r)=\hat{\zeta}_{r},
\end{aligned}
\right.
\end{equation*}
where $\hat{W}$ is a cylindrical Wiener process with the identity
covariance operator on
$(\hat{\Omega},\hat{\mathcal F},\hat{\mathbb P})$.
In view of Theorem \ref{avethf}, we get
$$
\lim_{k\rightarrow\infty}\hat{ E}\sup_{-r\leq s\leq t}
\left\|\hat{\psi}^{k}(s)-\hat{Y}_{r}(s)\right\|^{2}=0
$$
for any $t\geq -r$.

Let $\zeta_{r}$ be a random variable defined on
$(\Omega,\mathcal F,\mathbb P)$ such that
$\mathcal L(\zeta_{r})=\mu_{r}$, and $Y_{r}$ be the solution of
\begin{equation*}
\d X(t)=\left(A(X(t))+\bar{F}(X(t))\right)\d t
+\bar{G}(X(t))\d W(t)
\end{equation*}
with initial value $Y_{r}(-r)=\zeta_{r}$.
Since the law of the solutions for equation \eqref{eqG2.1}
(respectively, equation \eqref{eqG5_1}) is unique,
$\mathcal L\left(\hat{\psi}^{k}\right)
=\mathcal L\left(X_{\varepsilon_{n_{k}}}\right)$
and $\mathcal L(\hat{Y_{r}})=\mathcal L(Y_{r})$ in
$Pr(C([-r,+\infty),H))$.
Then we have
\begin{equation}\label{avertheqiv9}
\lim_{k\rightarrow\infty}d_{BL}\left(
\mathcal L(X_{\varepsilon_{n_{k}}}),
\mathcal L(Y_{r})\right)=0 \qquad {\rm{in}}\quad Pr(C([-r,+\infty),H)).
\end{equation}

It follows from the tightness of
$\left\{\mathcal L\left(X_{\varepsilon_{n_{k}}}(-r-1)\right)\right\}$
that there exists a subsequence
$\{\varepsilon_{n_{k_{j}}}\}\subset\{\varepsilon_{n_{k}}\}$
such that $\mathcal L\left(X_{\varepsilon_{n_{k_{j}}}}(-r-1)\right)$
weakly converges to $\mu_{r+1}$. We can find a random variable
$\zeta_{r+1}$ on $(\Omega,\mathcal F,\mathbb P)$ such that
$\mathcal L(\zeta_{r+1})=\mu_{r+1}$.
Let $Y_{r+1}$ be the solution of
\begin{equation*}
\d X(t)=\left(A(X(t))+\bar{F}(X(t))\right)\d t
+\bar{G}(X(t))\d W(t)
\end{equation*}
with initial value $Y_{r+1}(-r-1)=\zeta_{r+1}$.
Similar to the procedure of calculating \eqref{avertheqiv9}, we get
$$
\lim_{j\rightarrow\infty}d_{BL}\left(
\mathcal L(X_{\varepsilon_{n_{k_{j}}}}),\mathcal L(Y_{r+1})\right)=0
\qquad {\rm{in}} \quad Pr(C([-r-1,+\infty),H)).
$$
Therefore, we have $d_{BL}(\mathcal L(Y_{r}),\mathcal L(Y_{r+1}))=0$
in $Pr(C([-r,+\infty),H))$. In particular,
$\mathcal L(Y_{r}(t))=\mathcal L(Y_{r+1}(t))$ for all $t\geq-r$.

Define $\nu(t):=\mathcal L(Y_{r}(t))$, $t\geq-r$. We can extract a
subsequence which we still denote by
$\{X_{\varepsilon_{n_{k_{j}}}}\}$ satisfying
$\lim\limits_{j\rightarrow\infty}d_{BL}\left(
\L(X_{\varepsilon_{n_{k_{j}}}}(t)),\nu(t)\right)=0$ in $Pr(H)$
for every $t\in\R$. In view of Theorem \ref{Boundedth},
we obtain that $\nu$ is the law of  the $L^{2}$-bounded solution of
\eqref{eqG5_1}. Therefore we have
$$
\lim_{j\rightarrow\infty}d_{BL}\left(\L(X_{\varepsilon_{n_{k_{j}}}}(t)),
\L(\bar{X}(t))\right)=0 \quad {\rm{in}}\quad Pr(H)
$$
for every $t\in\R$. By the arbitrariness of
$\{\varepsilon_{n}\}_{n=1}^{\infty}\subset(0,1]$, we have
$$
\lim_{\varepsilon\rightarrow0}d_{BL}\left(\L(X_{\varepsilon}(t)),
\L(\bar{X}(t))\right)=0 \qquad {\rm{in}} \quad Pr(H).
$$

Now we show that $\lim\limits_{\varepsilon \to 0}d_{BL}\left(
\mathcal L(X_{\varepsilon}),\mathcal L(\bar{X})\right)=0$
in $Pr(C(\R,H))$. For any $[a,b]\subset\R$, we have
$\L(X_{\varepsilon}(a))$ converges weakly to $\mathcal L(X(a))$
in $Pr(H)$. In light of Skorohod representation theorem,
there exist random variables $\hat{\hat{\psi}}_{\varepsilon}(a)$ and
$\hat{\hat{\psi}}(a)$ defined on another probability space
$(\hat{\hat{\Omega}},\hat{\hat{\mathcal F}},\hat{\hat{\mathbb P}})$ satisfying
$\lim\limits_{\varepsilon\rightarrow0}\hat{\hat{\psi}}_{\varepsilon}(a)
=\hat{\hat{\psi}}(a)$ $\hat{\hat{\mathbb P}}$-a.s., where
$\L\left(\hat{\hat{\psi}}_{\varepsilon}(a)\right)
=\L\left(X_{\varepsilon}(a)\right)$ and
$\L\left(\hat{\hat{\psi}}(a)\right)=\mathcal L\left(\bar{X}(a)\right)$.
Similar to the procedure of calculating \eqref{avertheqiv9}, we have
\begin{equation}\label{avertheqiv15}
\lim_{\varepsilon\rightarrow0}d_{BL}
\left(\mathcal L(X_{\varepsilon}),\mathcal L(\bar{X})\right)=0
\qquad {\rm{in}}\quad Pr(C([a,b],H)).
\end{equation}

The proof is complete.
\end{proof}

\begin{coro}\label{averthcoro}
Under the conditions of Theorem \ref{averth}
the following statements hold:
\begin{enumerate}
\item
If  $F\in$ $ C(\mathbb R\times H,H)$ and
$G\in C(\R\times H,L_{2}(U,H))$ are jointly stationary
(respectively, $T$-periodic, quasi-periodic with the
spectrum of frequencies $\nu_1,\ldots,\nu_k$, almost periodic,
almost automorphic, Birkhoff recurrent, Lagrange stable,
Levitan almost periodic, almost recurrent, Poisson stable) in $t$
uniformly with respect to $x$ on each bounded subset,
then equation \eqref{eqG2.1} has a unique solution
$X_{\varepsilon} \in C_{b}(\mathbb R,L^2(\Omega,\mathbb P;H))$
which is stationary (respectively, $T$-periodic,
quasi-periodic with the spectrum of frequencies
$\nu_1,\ldots,\nu_k$, almost periodic, almost
automorphic, Birkhoff recurrent, Lagrange stable, Levitan almost
periodic, almost recurrent, Poisson stable) in distribution;
\item
If  $F\in$ $ C(\mathbb R\times H,H)$ and
$G\in C(\R\times H,L_{2}(U,H))$ are Lagrange stable
and jointly pseudo-periodic (respectively, pseudo-recurrent)
in $t$ uniformly with respect to $x$ on each bounded subset, then
equation \eqref{eqG2.1} has a unique solution
$X_{\varepsilon} \in C_{b}(\mathbb R,L^2(\Omega,\mathbb P;H))$
which is pseudo-periodic (respectively, pseudo-recurrent)
in distribution;
\item
$$
\lim\limits_{\varepsilon \to 0}
d_{BL}(\mathcal L(X_{\varepsilon}),\mathcal L(\bar{X}))=0
\quad {\rm{in}}~Pr(C(\R,H)),
$$
where $\bar{X}$ is the unique stationary solution of
averaged equation \eqref{eqG5_1}.
\end{enumerate}
\end{coro}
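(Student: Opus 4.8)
The plan is to obtain all three items as consequences of the already-proved Theorem \ref{averth}, combined with the comparability machinery of Theorem \ref{th1}, in exact parallel with the way Corollary \ref{corL2*} is deduced for equation \eqref{eqSPDE1}. Item (iii) needs no extra argument at all: it is precisely the convergence conclusion $\lim_{\varepsilon\to0}d_{BL}(\mathcal L(X_{\varepsilon}),\mathcal L(\bar X))=0$ in $Pr(C(\R,H))$ established in Theorem \ref{averth}(ii), so I would simply invoke that theorem. The content of the corollary therefore lies in items (i) and (ii), namely in showing that the recurrence type of $(F,G)$ is inherited by the solution $X_{\varepsilon}$ in distribution.

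The first step for items (i) and (ii) is to record the elementary but essential fact that for each fixed $0<\varepsilon\le1$ the rescaled coefficients $(F_{\varepsilon},G_{\varepsilon})$, with $F_{\varepsilon}(t,x)=F(t/\varepsilon,x)$ and $G_{\varepsilon}(t,x)=G(t/\varepsilon,x)$, share the same recurrence type as $(F,G)$. This is because the dilation $t\mapsto t/\varepsilon$ is a homeomorphism of $\R$ that carries $T$-periods to $\varepsilon T$-periods, $\delta$-almost periods to $\varepsilon\delta$-almost periods, and more generally sends the sets $\mathcal T(\cdot,\delta)$ and $\mathfrak T(\cdot,\delta)$ to their $\varepsilon$-scaled versions while preserving relative density, rational independence of the frequencies, Lagrange stability of the hull, and the defining inequalities of Poisson stability and pseudo-recurrence. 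Consequently, if $(F,G)$ is stationary (respectively $T$-periodic, quasi-periodic, Bohr almost periodic, almost automorphic, Birkhoff recurrent, Lagrange stable, Levitan almost periodic, almost recurrent, Poisson stable, pseudo-periodic, pseudo-recurrent) in $t$ uniformly on bounded subsets, then so is $(F_{\varepsilon},G_{\varepsilon})$. This type-by-type verification (or, more economically, a single lemma to the effect that the time dilation induces a dynamical homeomorphism of the relevant $BUC$ hulls) is the point I expect to require the most care to state cleanly.

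With the scaling invariance in hand, the second step transfers recurrence from the coefficients to the solution. By Theorem \ref{averth}(ii) the unique $L^{2}$-bounded solution $X_{\varepsilon}$ of \eqref{eqG2.1} is strongly compatible in distribution with $(F_{\varepsilon},G_{\varepsilon})$, that is, $\mathfrak M_{(F_{\varepsilon},G_{\varepsilon})}\subseteq\tilde{\mathfrak M}_{X_{\varepsilon}}$; by Theorem \ref{th1}(1) this also gives $\mathfrak N_{(F_{\varepsilon},G_{\varepsilon})}\subseteq\tilde{\mathfrak N}_{X_{\varepsilon}}$. I would then apply the three transfer statements of Theorem \ref{th1}: part (2) yields the stationary, $T$-periodic, Levitan almost periodic, almost recurrent and Poisson stable cases; part (3) yields the quasi-periodic, almost periodic, almost automorphic, Birkhoff recurrent and Lagrange stable cases; and part (4), using the Lagrange stability hypothesis, yields the pseudo-periodic and pseudo-recurrent cases of item (ii). In each case $X_{\varepsilon}$ inherits the corresponding property in distribution, which is exactly what items (i) and (ii) assert, while existence and uniqueness of $X_{\varepsilon}$ in $C_{b}(\R,L^{2}(\Omega,\P;H))$ come from Theorem \ref{Boundedth}.

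The only genuine check beyond this bookkeeping is that, for each fixed $\varepsilon$, the triple $(A,F_{\varepsilon},G_{\varepsilon})$ satisfies all hypotheses needed to invoke Theorems \ref{averth} and \ref{SCth}. Here the point is that the monotonicity, coercivity and boundedness constants are untouched by the time dilation, and the Lipschitz and one-sided Lipschitz constants $L_{F}$, $L_{G}$, $\lambda_{F}$ are likewise preserved, so that (H1)--(H6) hold with constants uniform in $\varepsilon$ and the standing assumption $2\lambda-2\lambda_{F}-L_{G}^{2}\ge0$ (with $\lambda'>0$ or strict inequality) remains valid. Once this is noted, items (i)--(iii) follow at once, and the proof reduces to a transcription of the argument for Corollary \ref{corL2*} applied to the scaled coefficients.
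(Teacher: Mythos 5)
Your proposal is correct and follows essentially the paper's own (implicit) route: the paper states this corollary without proof as an immediate consequence of Theorem \ref{averth} (existence, uniqueness, strong compatibility of $X_{\varepsilon}$ with $(F_{\varepsilon},G_{\varepsilon})$, and the limit giving item (iii)) combined with Shcherbakov's comparability Theorem \ref{th1}, in exact parallel with how Corollary \ref{corL2*} is deduced from Theorems \ref{th1} and \ref{SCth}. Your extra observation --- that the dilation $t\mapsto t/\varepsilon$ preserves each recurrence class (up to rescaling the periods/frequencies, e.g. $T\mapsto\varepsilon T$) and leaves all the structural constants in (H1)--(H6) untouched --- is precisely the bridge between the hypothesis on $(F,G)$ and the compatibility with $(F_{\varepsilon},G_{\varepsilon})$ that the paper leaves implicit, and you fill it correctly.
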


\section{Global averaging principle in weak sense}
We recall firstly that some known definitions and lemma in dynamical
systems (see, e.g. \cite{CJLL2013, Ch2015, CV2002, KR2011, Sel}
for more details). Let $\left(X, d_{X}\right)$ and
$\left(P, d_{P}\right)$ be metric spaces.

\begin{definition}\rm
A {\em nonautonomous dynamical system}
$\left(\sigma,\varphi\right)$ (in short, $\varphi$) consists
of two ingredients:
\begin{enumerate}
  \item A {\em dynamical system} $\sigma$ on $P$ with time set
        $\mathbb T=\mathbb Z$ or $\R$, i.e.
        \begin{enumerate}
          \item [(1)] $\sigma_{0}(\cdot)=Id_{P}$,
          \item [(2)] $\sigma_{t+s}(p)=\sigma_{t}
          (\sigma_{s}(p))$ for all $t,s\in\mathbb T$
          and $p\in P$,
          \item [(3)] the mapping
          $(t,p)\mapsto\sigma_{t}(p)$ is continuous.
        \end{enumerate}
        If $\mathbb T=\R$, $\sigma$ is called {\em flow} on $P$; if
        $\mathbb T=\R^{+}$, $\sigma$ is called {\em semiflow} on $P$.
  \item A {\em cocycle} $\varphi:\mathbb T^{+}\times P\times X
        \rightarrow X$ satisfies
        \begin{enumerate}
          \item [(1)] $\varphi(0,p,x)=x$ for all
          $(p,x)\in P\times X$,
          \item [(2)] $\varphi(t+s,p,x)=\varphi(t,\sigma_{s}(p),
          \varphi(s,p,x))$ for all $s,t\in\mathbb T^{+}$
          and $(p,x)\in P\times X$,
          \item [(3)] the mapping
          $(t,p,x)\mapsto\varphi(t,p,x)$ is continuous.
        \end{enumerate}
\end{enumerate}
$P$ is called the {\em base} or {\em parameter space} and
$X$ is the {\em fiber} or {\em state space}.
For convenience, we also write $\sigma_{t}(p)$ as $\sigma_{t}p$.
\end{definition}

\begin{definition}\rm
Let $(\sigma,\varphi)$ be a nonautonomous dynamical system with
base space $P$ and state space $X$. The {\em skew product semiflow}
$\Pi:\mathbb T^{+}\times P\times X\rightarrow P\times X$ is a
semiflow of the form:
\[
\Pi(t,(p,x)):=\left(\sigma_{t}p,\varphi(t,p,x)\right).
\]
\end{definition}

\begin{definition}\rm
Define $\mathfrak{X}:=P\times X$. A nonempty compact subset
$\mathfrak{A}$ of $\mathfrak{X}$ is called {\em global attractor}
for skew product semiflow $\Pi$, if
\begin{enumerate}
  \item $\Pi(t,\mathfrak{A})=\mathfrak{A}$ for all
  $t\in\mathbb T^{+}$,
  \item $\lim\limits_{t\rightarrow+\infty}{\rm dist}_{\mathfrak{X}}
  \left(\Pi(t,D),\mathfrak{A}\right)=0$ for every nonempty bounded
  subset $D$ of $\mathfrak{X}$,
\end{enumerate}
where ${\rm dist}_{\mathfrak{X}}(A,B)$ is the Hausdorff semi-metric
between sets $A$ and $B$, i.e.
${\rm dist}_{\mathfrak{X}}(A,B):=\sup\limits_{x\in A}d(x,B)$  with
$d(x,B):=\inf\limits_{y\in B}d_{\mathfrak{X}}(x,y)$. Here
$d_{\mathfrak{X}}\left((p_{1},x_{1}),(p_{2},x_{2})\right)=
d_{P}(p_{1},p_{2})+d_{X}(x_{1},x_{2})$ for all $(p_{1},x_{1}),
(p_{2},x_{2})\in P\times X$.
\end{definition}

\begin{lemma}[see, e.g. \cite{CV2002}]\label{conta}
Let $\{S(t)\}_{t\geq0}$ be a  semiflow in a complete metric space
$\mathcal X$ having a compact attracting set $K\subset\mathcal X$,
i.e.
\[
\lim_{t\rightarrow+\infty}{\rm{dist_{\mathcal X}}}(S(t)B,K)=0
\]
for all bounded set $B\subset\mathcal X$. Then $\{S(t)\}_{t\geq0}$
has a global attractor $\mathcal A:=\omega(K)$. Where $\omega(K)$
is the $\omega$-limit set of $K$, i.e.
$\omega(K):=\cap_{t\geq0}\overline{\cup_{s\geq t}S(s)K}$.
\end{lemma}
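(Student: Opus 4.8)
The plan is to verify that $\mathcal A=\omega(K)$ enjoys the three defining properties of a global attractor---compactness, strict invariance, and attraction of every bounded set---after which uniqueness follows automatically. Throughout I would work with the sequential description
\[
\omega(K)=\{y\in\mathcal X:\ \exists\, t_n\to+\infty,\ x_n\in K\ \text{with}\ S(t_n)x_n\to y\},
\]
which is equivalent to the stated definition $\cap_{t\ge0}\overline{\cup_{s\ge t}S(s)K}$, and I would lean repeatedly on the two structural facts at hand: the continuity of each map $S(t)$ together with the semigroup law $S(t)S(\tau)=S(t+\tau)$, and the attracting hypothesis ${\rm dist}(S(t)B,K)\to0$ applied to various bounded sets $B$.

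First I would establish that $\omega(K)$ is nonempty and compact. Compactness is cheap: $\omega(K)$ is closed, being an intersection of closed sets, while the attracting property forces $\overline{\cup_{s\ge t}S(s)K}$ into an arbitrarily small neighbourhood of $K$ for large $t$, so that $\omega(K)\subseteq K$; a closed subset of the compact set $K$ is compact. Nonemptiness I would obtain by fixing any $x\in K$, choosing $k_n\in K$ with $d(S(n)x,k_n)\to0$ (available from attraction), and extracting a convergent subsequence $k_{n_j}\to y\in K$ by compactness of $K$; then $S(n_j)x\to y$ and $y\in\omega(K)$.

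Next I would prove strict invariance $S(t)\omega(K)=\omega(K)$. The inclusion $S(t)\omega(K)\subseteq\omega(K)$ is immediate: if $S(t_n)x_n\to y$, then continuity of $S(t)$ and the semigroup law give $S(t+t_n)x_n\to S(t)y$ with $t+t_n\to+\infty$, so $S(t)y\in\omega(K)$. For the reverse inclusion, given $y=\lim S(t_n)x_n$ I would examine the earlier points $S(t_n-t)x_n$; these lie within a vanishing distance of the compact set $K$ by the attracting property, so a subsequence converges to some $z\in\omega(K)$, and continuity yields $S(t)z=\lim S(t_n)x_n=y$, i.e. $y\in S(t)\omega(K)$.

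The hardest step, and the one I would treat most carefully, is the attraction of an arbitrary bounded set $B$, namely $\lim_{t\to\infty}{\rm dist}(S(t)B,\omega(K))=0$. I would argue by contradiction, assuming there are $t_n\to+\infty$ and $b_n\in B$ with $d(S(t_n)b_n,\omega(K))\ge\varepsilon$; by attraction and compactness of $K$ one may pass to a subsequence so that $S(t_n)b_n\to y\in K$. The delicate point is to show $y\in\omega(K)$, since the $b_n$ need not lie in $K$. To this end I would fix each integer $m$ and examine $S(t_n-m)b_n$, which again clusters at $K$ by the attracting property; extracting a convergent subsequence $S(t_n-m)b_n\to w_m\in K$ and applying continuity of $S(m)$ gives $S(m)w_m=y$, whence $y\in S(m)K\subseteq\overline{\cup_{s\ge m}S(s)K}$. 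As $m$ is arbitrary, $y$ lies in the intersection defining $\omega(K)$, contradicting $d(y,\omega(K))\ge\varepsilon$. Assembling the four properties exhibits $\omega(K)$ as a (necessarily unique) global attractor, completing the proof.
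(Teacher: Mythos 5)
Your proof is correct. Note that the paper itself gives no proof of this lemma: it is quoted as a known result from Chepyzhov--Vishik \cite{CV2002}, so the only comparison available is with the classical argument in that reference, which your proof reproduces faithfully and completely. You handle correctly the two genuinely delicate points: (a) in the reverse inclusion $\omega(K)\subseteq S(t)\omega(K)$, the backward-shifted points $S(t_n-t)x_n$ cluster at $K$ by the attracting hypothesis applied to the bounded set $K$ itself, and their limit lies in $\omega(K)$ (not merely in $K$) by the sequential characterization, since $t_n-t\to+\infty$; (b) in the attraction step, the trick of shifting back by each fixed $m$ and using $S(m)w_m=y$ shows $y\in S(m)K\subseteq\overline{\cup_{s\geq m}S(s)K}$, and since these sets decrease in $m$, intersecting over integers $m$ is the same as intersecting over all $t\geq0$, so $y\in\omega(K)$, giving the contradiction. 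Two implicit reductions you rely on are both legitimate and worth being aware of: compactness of $K$ lets you apply the attracting hypothesis to $B=K$ (compact sets are bounded), and each further subsequence extraction preserves the convergence $S(t_n)b_n\to y$, so no diagonal argument is needed. The only cosmetic imprecision is that the negation of attraction gives $\mathrm{dist}_{\mathcal X}(S(t_n)B,\omega(K))\geq\varepsilon$, from which one picks $b_n\in B$ with $d(S(t_n)b_n,\omega(K))\geq\varepsilon/2$; this factor of two changes nothing.
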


\begin{definition}\rm
A family $D:=\{D_{p}:p\in P\}$ of subsets of $X$ is called a
{\em non-autonomous set}. If every {\em fiber} $D_{p}$
is compact, then $D=\{D_{p}:p\in P\}$ is called
{\em non-autonomous compact set}.
\end{definition}

\begin{definition}[see, e.g. \cite{CV2002}]\label{gadef}\rm
A compact set $\mathcal A\subset X$ is called the
{\em uniform attractor (with respect to $p\in P$)} of cocycle
$\varphi$ if the following conditions are fulfilled:
\begin{enumerate}
  \item The set $\mathcal A$ is uniformly attracting, i.e.
        \[
        \lim_{t\rightarrow+\infty}\sup_{p\in P}{\rm dist}_{X}
        \left(\varphi(t,p,B),\mathcal A\right)=0
        \]
        for every bounded set $B\subset X$.
  \item If $\mathcal A_{1}$ is another closed uniformly attracting
        set, then $\mathcal A\subset \mathcal A_{1}$.
\end{enumerate}
\end{definition}

\begin{remark}\rm
It follows from Definition \ref{gadef} (ii) that
the uniform attractor is unique.
\end{remark}

Denote by $\mathbb F:=(F,G)\in BUC(\R\times H,H)\times
BUC(\R\times H,L_{2}(U,H))$. Recall that
$F^{\tau}(t,x)=F(t+\tau,x)$ for all $(t,x)\in \R\times H$,
$$H(\mathbb F)=\overline{\left\{\mathbb F^{\tau}
=\left(F^{\tau},G^{\tau}\right):\tau\in\R\right\}}\subset
BUC(\R\times H,H)\times BUC(\R\times H,L_{2}(U,H)),$$
and $\left(H(\mathbb F),\R,\sigma\right)$ is a shift dynamical
system. Here $\sigma:\R\times H(\mathbb F)\rightarrow H(\mathbb F),
(\tau,\mathbb F)\mapsto \mathbb F^{\tau}$.

Let $X(t,s,x),t\geq s$ be the solution of equation
\begin{equation}\label{gaSPDE}
\d X(t)=\left(A(X(t))+F(t,X(t))\right)\d t+G(t,X(t))\d W(t)
\end{equation}
with initial condition $X(s,s,x)=x$.
Define $P_{\mathbb F}(s,x,t,\d y):=\mathbb P\circ\left(
X(t,s,x)\right)^{-1}(\d y)$. Then we can associate a mapping
$P^{*}(t,\mathbb F,\cdot):Pr(H)\rightarrow Pr(H)$ defined by
\[
P^{*}(t,\mathbb F,\mu)(B):=\int_{H}P_{\mathbb F}(0,x,t,B)\mu(\d x)
\]
for all $\mu\in Pr(H)$ and $B\in\mathcal{B}(H)$. We write $Pr_{2}(H)$
to mean the space of probability measures $\mu\in Pr(H)$ such that
$
\int_{H}\|z\|^{2}\mu(\d z)<\infty.
$
We say that $B\subset Pr_{2}(H)$ is {\em bounded} if there exists
a constant $r>0$ such that
$
\int_{H}\|z\|^{2}\mu(\d z)\leq r^{2}
$
for all $\mu\in B$. In the following, we define
\[
B_{r}:=\left\{\mu\in Pr_{2}(H):\int_{H}\|z\|^{2}\mu(\d z)
\leq r^{2}\right\}
\]
and
\[
\mathcal O_{\rho}(B):=\{\mu\in Pr_{2}(H):d(\mu,B)<\rho\}
\]
for all $r,\rho>0$,
where $d(\mu,B):=\inf\limits_{\nu\in B}d_{BL}(\mu,\nu)$.

\begin{lemma}\label{cocylem}
Consider equation \eqref{gaSPDE}. Assume that conditions
{\rm{(H1)}}, {\rm{(H2$'$)}} and {\rm(H3)--(H5)} hold. Then
$P^{*}$ is a cocycle on $\left(H(\mathbb F),\R,\sigma\right)$ with
fiber $Pr_{2}(H)$.
\end{lemma}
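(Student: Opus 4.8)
The plan is to check the three conditions in the definition of a cocycle for $P^*:\R^+\times H(\mathbb F)\times Pr_2(H)\to Pr_2(H)$: that $P^*(t,\tilde{\mathbb F},\cdot)$ maps $Pr_2(H)$ into itself, the algebraic identities $P^*(0,\tilde{\mathbb F},\mu)=\mu$ and $P^*(t+s,\tilde{\mathbb F},\mu)=P^*(t,\sigma_s\tilde{\mathbb F},P^*(s,\tilde{\mathbb F},\mu))$, and joint continuity of $(t,\tilde{\mathbb F},\mu)\mapsto P^*(t,\tilde{\mathbb F},\mu)$. Throughout I would write $X^{\tilde{\mathbb F}}(t,s,x)$ for the solution of \eqref{gaSPDE} driven by $\tilde{\mathbb F}=(\tilde F,\tilde G)$ with $X(s)=x$, and use that by Remark \ref{hulllem} every $\tilde{\mathbb F}\in H(\mathbb F)$ satisfies (H1)--(H4) with the same constants as $\mathbb F$, so all estimates of Section 3 apply uniformly over the hull.

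For well-definedness, fix $\mu\in Pr_2(H)$ and let $\zeta$ have law $\mu$. Since $\int_H\|z\|^2\,P^*(t,\tilde{\mathbb F},\mu)(\d z)=E\|X^{\tilde{\mathbb F}}(t,0,\zeta)\|^2\le E\sup_{0\le u\le t}\|X^{\tilde{\mathbb F}}(u,0,\zeta)\|^2$, the energy estimate \eqref{peq} of Lemma \ref{pe} bounds the right-hand side by $C_t(1+\int_H\|x\|^2\mu(\d x))<\infty$; hence $P^*(t,\tilde{\mathbb F},\mu)\in Pr_2(H)$, using only (H1)--(H4) as assumed. The identity $P^*(0,\tilde{\mathbb F},\mu)=\mu$ is immediate from $X^{\tilde{\mathbb F}}(0,0,x)=x$, i.e. $P_{\tilde{\mathbb F}}(0,x,0,\cdot)=\delta_x$.

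The cocycle identity rests on a shift relation for the transition kernels. Putting $Y(u):=X^{\tilde{\mathbb F}}(s+u,s,x)$ and using that $u\mapsto W(s+u)-W(s)$ is again a cylindrical Wiener process while $\tilde F(s+u,\cdot)=(\sigma_s\tilde F)(u,\cdot)$ and $\tilde G(s+u,\cdot)=(\sigma_s\tilde G)(u,\cdot)$, the process $Y$ solves the equation driven by $\sigma_s\tilde{\mathbb F}$ with $Y(0)=x$; uniqueness in law (Theorem \ref{Boundedth}) then gives $P_{\tilde{\mathbb F}}(s,x,s+u,B)=P_{\sigma_s\tilde{\mathbb F}}(0,x,u,B)$. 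Combining this with the Chapman--Kolmogorov equation $P_{\tilde{\mathbb F}}(0,x,t+s,B)=\int_H P_{\tilde{\mathbb F}}(s,y,t+s,B)\,P_{\tilde{\mathbb F}}(0,x,s,\d y)$ and Fubini's theorem yields $P^*(t+s,\tilde{\mathbb F},\mu)(B)=\int_H P_{\sigma_s\tilde{\mathbb F}}(0,y,t,B)\,P^*(s,\tilde{\mathbb F},\mu)(\d y)=P^*(t,\sigma_s\tilde{\mathbb F},P^*(s,\tilde{\mathbb F},\mu))(B)$, which is exactly the cocycle property. The measurability of $x\mapsto P_{\tilde{\mathbb F}}(0,x,t,B)$ and the Feller property needed to justify these integrals are as in the proof of Theorem \ref{Boundedth}.

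The main obstacle is joint continuity. Given $(t_n,\tilde{\mathbb F}_n,\mu_n)\to(t,\tilde{\mathbb F},\mu)$, convergence in $H(\mathbb F)$ is uniform on bounded subsets, so in particular $\tilde F_n\to\tilde F$ and $\tilde G_n\to\tilde G$ pointwise, verifying hypotheses (1)--(2) of Lemma \ref{conlemma}; and $d_{BL}(\mu_n,\mu)\to0$ supplies its hypothesis on initial laws. Hence Lemma \ref{conlemma}(3) gives $d_{BL}(\mathcal L(X^{\tilde{\mathbb F}_n}(\cdot,0,\zeta_n)),\mathcal L(X^{\tilde{\mathbb F}}(\cdot,0,\zeta)))\to0$ in $Pr(C([0,\infty),H))$, where $\zeta_n,\zeta$ have laws $\mu_n,\mu$. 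Since the evaluation map $(\tau,\omega)\mapsto\omega(\tau)$ is continuous on $\R^+\times C([0,\infty),H)$, and the weakly convergent family $\{\mathcal L(X^{\tilde{\mathbb F}_n}(\cdot,0,\zeta_n))\}$ is tight by Prokhorov's theorem, evaluating at $t_n\to t$ yields $d_{BL}(P^*(t_n,\tilde{\mathbb F}_n,\mu_n),P^*(t,\tilde{\mathbb F},\mu))\to0$, i.e. weak continuity. If the topology carried by $Pr_2(H)$ is strictly finer than the weak one, the second-moment bound of Lemma \ref{pe} (extended to higher moments by the same It\^o--Burkholder energy argument) furnishes the uniform integrability that promotes weak convergence to convergence in $Pr_2(H)$, completing the verification.
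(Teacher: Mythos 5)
Your proof is correct and takes essentially the same approach as the paper's, which simply cites Lemma \ref{conlemma} for continuity of $P^{*}$, uniqueness in law for the cocycle identity, and the definition of $P^{*}$ for the identity at $t=0$; you merely supply the details (the energy bound from Lemma \ref{pe} for well-definedness, the Wiener-shift plus Chapman--Kolmogorov argument, and the evaluation-map step) that the paper leaves implicit. Your closing hedge is unnecessary: the paper metrizes $Pr_{2}(H)$ with $d_{BL}$, so weak continuity is exactly what is required, and the conditional fix via higher moments (which would anyway fail for initial laws having only second moments) never needs to be invoked.
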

\begin{proof}
It follows from Lemma \ref{conlemma} that $P^{*}$ is a continuous
mapping from $\R^{+}\times H(\mathbb F)\times Pr_{2}(H)$ into
$Pr_{2}(H)$. For any $\mu\in Pr_{2}(H)$, $t,\tau\in\R^{+}$ and
$\tilde{\mathbb F}\in H(\mathbb F)$, according to the
uniqueness in law of the solutions for equation
\eqref{gaSPDE}, we have
$P^{*}(t+\tau,\tilde{\mathbb F},\mu)=P^{*}\left(t,\sigma_{\tau}
\tilde{\mathbb F},P^{*}(\tau,\tilde{\mathbb F},\mu)\right)$.
And by the definition of $P^{*}$ we have
$P^{*}(0,\tilde{\mathbb F},\cdot)
=Id_{Pr_{2}(H)}$ for all $\tilde{\mathbb F}\in H(\mathbb F)$.
\end{proof}

\begin{coro}
Under conditions of Lemma \ref{cocylem},
the mapping given by
\[
\Pi :\R^{+}\times H(\mathbb F)\times Pr_{2}(H)\rightarrow
H(\mathbb F)\times Pr_{2}(H),
\]
\[
\Pi(t,(\tilde{\mathbb F},\mu)):=\left(\sigma_{t}\tilde{\mathbb F},
P^{*}(t,\tilde{\mathbb F},\mu)\right)
\]
is a continuous skew-product semiflow.
\end{coro}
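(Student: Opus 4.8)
The plan is to read this corollary as an immediate consequence of Lemma \ref{cocylem} together with the Definition of skew-product semiflow, so that the entire task reduces to matching the already-verified cocycle structure of $P^{*}$ against the three defining axioms of a semiflow. First I would recall that, by Example \ref{ex2}, the triplet $\left(H(\mathbb F),\R,\sigma\right)$ is a (shift) dynamical system; in particular $\sigma_{0}=Id_{H(\mathbb F)}$, $\sigma_{t+s}=\sigma_{t}\circ\sigma_{s}$ for all $t,s\in\R$, and the map $(t,\tilde{\mathbb F})\mapsto\sigma_{t}\tilde{\mathbb F}$ is continuous. By Lemma \ref{cocylem}, $P^{*}$ is a cocycle over $\left(H(\mathbb F),\R,\sigma\right)$ with fiber $Pr_{2}(H)$: it maps $\R^{+}\times H(\mathbb F)\times Pr_{2}(H)$ continuously into $Pr_{2}(H)$, satisfies $P^{*}(0,\tilde{\mathbb F},\cdot)=Id_{Pr_{2}(H)}$, and obeys the cocycle identity $P^{*}(t+\tau,\tilde{\mathbb F},\mu)=P^{*}\!\left(t,\sigma_{\tau}\tilde{\mathbb F},P^{*}(\tau,\tilde{\mathbb F},\mu)\right)$. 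Hence $(\sigma,P^{*})$ is exactly a nonautonomous dynamical system with base space $H(\mathbb F)$ and fiber $Pr_{2}(H)$, which is precisely the data out of which a skew-product semiflow is assembled.

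Second, I would verify the semiflow axioms for $\Pi$ directly. The identity axiom is
\[
\Pi(0,(\tilde{\mathbb F},\mu))=\left(\sigma_{0}\tilde{\mathbb F},P^{*}(0,\tilde{\mathbb F},\mu)\right)=(\tilde{\mathbb F},\mu).
\]
For the composition axiom, combining the flow property of $\sigma$ in the first coordinate with the cocycle identity in the second gives
\[
\Pi(t+s,(\tilde{\mathbb F},\mu))=\left(\sigma_{t+s}\tilde{\mathbb F},P^{*}(t+s,\tilde{\mathbb F},\mu)\right)=\left(\sigma_{t}\sigma_{s}\tilde{\mathbb F},P^{*}(t,\sigma_{s}\tilde{\mathbb F},P^{*}(s,\tilde{\mathbb F},\mu))\right)=\Pi\!\left(t,\Pi(s,(\tilde{\mathbb F},\mu))\right)
\]
for all $t,s\in\R^{+}$ and $(\tilde{\mathbb F},\mu)\in H(\mathbb F)\times Pr_{2}(H)$. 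Finally, continuity of the map $(t,\tilde{\mathbb F},\mu)\mapsto\Pi(t,(\tilde{\mathbb F},\mu))$ follows coordinatewise from the continuity of $\sigma$ and of $P^{*}$ recorded above, since the target carries the product metric $d_{H(\mathbb F)\times Pr_{2}(H)}\!\left((\tilde{\mathbb F}_{1},\mu_{1}),(\tilde{\mathbb F}_{2},\mu_{2})\right)=d(\tilde{\mathbb F}_{1},\tilde{\mathbb F}_{2})+d_{BL}(\mu_{1},\mu_{2})$.

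I do not expect a genuine obstacle at this step: all the analytic content has already been absorbed into Lemma \ref{cocylem}, namely the continuous dependence of laws on initial data and coefficients (Lemma \ref{conlemma}, whence continuity of $P^{*}$) and the cocycle identity (via uniqueness in law for \eqref{gaSPDE}). The one point deserving a word of care — that $\Pi$ genuinely takes values in the fiber $Pr_{2}(H)$, i.e. that $P^{*}(t,\tilde{\mathbb F},\mu)$ has finite second moment whenever $\mu$ does — is already part of the statement ``with fiber $Pr_{2}(H)$'' in Lemma \ref{cocylem}, and ultimately rests on the uniform $L^{2}$-estimate \eqref{ineq2}. With these observations, the corollary follows by simply reading off the verified identity, composition, and continuity properties against the Definition of skew-product semiflow.
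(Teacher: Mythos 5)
Your proposal is correct and follows exactly the route the paper intends: the paper states this corollary without proof, treating it as immediate from Lemma \ref{cocylem} (cocycle property, identity, continuity via Lemma \ref{conlemma}) combined with the definitions of nonautonomous dynamical system and skew-product semiflow, and your write-up is precisely the routine verification the paper omits.
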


For any given $\tilde{\mathbb F}\in H(\mathbb F)$,
suppose that (H1), (H2$'$), (H3)--(H4) hold and
$2\lambda-2\lambda_{F}-L_{G}^{2}\geq0$. If $\lambda'>0$
or $2\lambda-2\lambda_{F}-L_{G}^{2}>0$, then equation
\eqref{gaSPDE} has a unique $L^{2}$-bounded solution
$X_{\tilde{\mathbb F}}$ with the distribution
$\L(X_{\tilde{\mathbb F}}(t))=:\mu_{\tilde{\mathbb F}}(t), t\in\R$.
In the following,
we denote by $\mathfrak{X}:=H(\mathbb F)\times Pr_{2}(H)$.

\begin{lemma}\label{avehullem}
Let
$$\bar{F}(x):=\lim\limits_{T\rightarrow\infty}\frac{1}{T}
\int_{t}^{t+T}F(s,x)\d s
\quad {\rm and}\quad
\lim\limits_{T\rightarrow\infty}\frac{1}{T}
\int_{t}^{t+T}\|G(s,x)-\bar{G}(x)\|_{L_2(U,H)}^{2}\d s=0$$ uniformly with
respect to $t\in\R$. Assume that $F$ and $G$ satisfy ${\rm(G1)}$--${\rm(G2)}$.
If $H(\mathbb F)$ is compact, then for
any $\tilde{\mathbb F}=(\tilde{F},\tilde{G})\in H(\mathbb F)$ we have
\begin{equation}\label{avehull}
\frac{1}{T}\left\|\int_t^{t+T}\left(\tilde{F}(s,x)-\bar{F}(x)\right)\d s\right\|
\leq\omega_1(T)(1+\|x\|)
\end{equation}
and
\begin{equation}\label{avehull2}
\frac{1}{T}\int_t^{t+T}\|\tilde{G}(s,x)-\bar{G}(x)\|_{L_2(U,H)}^{2}
\d s\leq\omega_2(T)(1+\|x\|^2)
\end{equation}
for all $T>0$, $x\in H$ and $t\in\R$.
\end{lemma}
\begin{proof}
Given
$\tilde{F}\in H(F)$, there exists $\{t_{n}\}\subset\R$ such that
\[
\lim_{n\rightarrow\infty}\sup_{|t|\leq l,\|x\|\leq r}
\|\tilde{F}(t,x)-F(t+t_{n},x)\|=0
\]
for all $l,r>0$. Then we have
\begin{align}\label{app01}
&
\frac{1}{T}\left\|\int_{t}^{t+T}\left(\tilde{F}(s,x)-\bar{F}(x)\right)\d s\right\|\\\nonumber
&
\leq\frac{1}{T}\left\|\int_{t}^{t+T}\left(\tilde{F}(s,x)-F(s+t_n,x)\right)\d s\right\|
+\frac{1}{T}\left\|\int_{t}^{t+T}\left(F(s+t_n,x)-\bar{F}(x)\right)\d s\right\|\\\nonumber
&
\leq\frac{1}{T}\left\|\int_{t}^{t+T}\left(\tilde{F}(s,x)-F(s+t_n,x)\right)\d s\right\|
+\omega_1(T)(1+\|x\|).
\end{align}
Letting $n\rightarrow\infty$ in \eqref{app01}, by Lebesgue dominated convergence theorem, we get
\begin{align*}
\frac{1}{T}\left\|\int_{t}^{t+T}\left(\tilde{F}(s,x)-\bar{F}(x)\right)\d s\right\|
\leq\omega_1(T)(1+\|x\|).
\end{align*}

The proof of \eqref{avehull2} is similar.
\end{proof}

\begin{remark}\rm\label{uesteq}
It follows from Remark \ref{hulllem} and Lemma \ref{avehullem} that estimates \eqref{ineq2},
\eqref{ANSCP}, \eqref{peq}, \eqref{gaseq}, \eqref{compeq} and
\eqref{avethfeq15} hold uniformly for all
$\tilde{\mathbb F}\in H(\mathbb F)$
and $\varepsilon\in(0,1]$.
\end{remark}

\begin{prop}\label{invlemm}
Consider equation \eqref{gaSPDE}.
Assume that conditions {\rm(H1)}, {\rm(H2$'$)}, {\rm(H3)--(H6)} hold,
and $2\lambda-2\lambda_{F}-L_{G}^{2}\geq0$. Suppose
further that $\lambda'>0$ or $2\lambda-2\lambda_{F}-L_{G}^{2}>0$.
Then we have the following results.
\begin{enumerate}
  \item Define
  $\mathfrak{A}_{\tilde{\mathbb F}}
  :=\overline{\left\{\mu_{\tilde{\mathbb F}}(t)
  \in Pr_{2}(H):t\in \R\right\}}$. Then
  $$
  P^{*}(t,\tilde{\mathbb F},\mathfrak{A}_{\tilde{\mathbb F}})
  =\mathfrak{A}_{\sigma_{t}\tilde{\mathbb F}}
  $$
  for all $t\in \R^{+}$ and $\tilde{\mathbb F}\in H(\mathbb F)$.
  \item If $H(\mathbb F)$ is compact, then
  the skew product semiflow $\Pi$ admits a global attractor
  $\mathfrak{A}:=\omega\left(H(\mathbb F)\times\overline{
  \cup_{\tilde{\mathbb F}\in H(\mathbb F)}
  \mathfrak{A}_{\tilde{\mathbb F}}}\right)$.
  Moreover, $\Pi_{2}\mathfrak{A}$ is the uniform
  attractor of cocycle $P^{*}$. Here
  $\Pi_{2}(\tilde{\mathbb F},\mu):=\mu$ for all
  $(\tilde{\mathbb F},\mu)\in H(\mathbb F)\times
  Pr_{2}(H)$.
\end{enumerate}
\end{prop}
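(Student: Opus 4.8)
The plan is to handle the two parts with different machinery: part (1) is a soft consequence of the uniqueness of the $L^{2}$-bounded solution together with the flow and cocycle properties already recorded, while part (2) is where the analytic work sits, namely producing a compact attracting set in the space $Pr_{2}(H)$ of measures with finite second moment and then invoking the abstract attractor criterion Lemma \ref{conta}.

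For part (1), the engine is the translation identity $\mu_{\sigma_{\tau}\tilde{\mathbb F}}(t)=\mu_{\tilde{\mathbb F}}(t+\tau)$ for all $t,\tau\in\R$. I would prove it by checking that the time-shifted process $X_{\tilde{\mathbb F}}(\cdot+\tau)$ is again an $L^{2}$-bounded solution, now of the equation with coefficient $\sigma_{\tau}\tilde{\mathbb F}=\tilde{\mathbb F}^{\tau}$ (the increment $W(\cdot+\tau)-W(\tau)$ is again a cylindrical Wiener process, so the shifted terms $\tilde F(\cdot+\tau,\cdot),\tilde G(\cdot+\tau,\cdot)$ appear), and then quoting the uniqueness in law from Theorem \ref{Boundedth}. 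Taking closures, this identity yields at once $\mathfrak A_{\sigma_{t}\tilde{\mathbb F}}=\mathfrak A_{\tilde{\mathbb F}}$. The two inclusions making up $P^{*}(t,\tilde{\mathbb F},\mathfrak A_{\tilde{\mathbb F}})=\mathfrak A_{\sigma_{t}\tilde{\mathbb F}}$ then run through the flow property $\mu_{\tilde{\mathbb F}}(t)=P^{*}(t,\tilde{\mathbb F},\mu_{\tilde{\mathbb F}}(0))$ (Theorem \ref{Boundedth}(ii)) and the cocycle relation of Lemma \ref{cocylem}: the forward orbit of $\mu_{\tilde{\mathbb F}}(0)$ is exactly $\{\mu_{\tilde{\mathbb F}}(t):t\ge0\}$, and the cocycle identity $P^{*}(s+t,\tilde{\mathbb F},\cdot)=P^{*}(s,\sigma_{t}\tilde{\mathbb F},P^{*}(t,\tilde{\mathbb F},\cdot))$ is what relates images along the orbit to the shifted base point. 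Since $P^{*}(t,\tilde{\mathbb F},\cdot)$ is continuous (Lemma \ref{conlemma}) and $\mathfrak A_{\tilde{\mathbb F}}$ is compact (by the tightness established in part (2)), image and closure commute and the argument closes. The one genuinely delicate point is that the fiberwise image and the orbit must be matched through the base shift $\sigma_{t}$, not pointwise in the orbit parameter $s$; hence the cocycle property must be used rather than a naive semigroup identity in $\tilde{\mathbb F}$ fixed.

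For part (2), I would first record that the estimates \eqref{ineq2} and \eqref{compeq} hold uniformly over $\tilde{\mathbb F}\in H(\mathbb F)$ (Remark \ref{uesteq}), which produces a bounded set $B_{r}\subset Pr_{2}(H)$ absorbing bounded sets uniformly in the base. The crux is to upgrade ``bounded and absorbing'' to ``compact and attracting'' in $Pr_{2}(H)$: from (H6) and \eqref{compeq} one gets $\sup_{t,\tilde{\mathbb F}}E\|X_{\tilde{\mathbb F}}(t)\|_{S}^{2}<\infty$, so Chebyshev's inequality together with the compact embedding $S\subset H$ gives tightness, hence relative compactness for the weak topology, while the uniform higher-moment bound \eqref{ineq2} (with some $p>1$) supplies uniform integrability of $\|\cdot\|^{2}$ and thereby relative compactness in $Pr_{2}(H)$. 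Combined with the standing assumption that $H(\mathbb F)$ is compact, this yields a compact attracting set $K\subset\mathfrak X=H(\mathbb F)\times Pr_{2}(H)$ for the skew-product semiflow $\Pi$, which may be taken to be $H(\mathbb F)\times\overline{\cup_{\tilde{\mathbb F}}\mathfrak A_{\tilde{\mathbb F}}}$. Lemma \ref{conta} then delivers the global attractor $\mathfrak A=\omega(K)$ in the asserted form, and I would finally identify $\Pi_{2}\mathfrak A$ as the uniform attractor of $P^{*}$ by checking the two conditions of Definition \ref{gadef}: uniform attraction from $\sup_{\tilde{\mathbb F}}\mathrm{dist}_{Pr_{2}(H)}(P^{*}(t,\tilde{\mathbb F},B),\Pi_{2}\mathfrak A)\to0$, and minimality from that of $\mathfrak A$.

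I expect the main obstacle to be the compactness in $Pr_{2}(H)$ rather than merely in $(Pr(H),d_{BL})$: tightness from (H6) and the compact embedding only delivers weak relative compactness, and one must separately control the tails of the second moments uniformly in $t$ and in $\tilde{\mathbb F}\in H(\mathbb F)$, which is exactly the purpose of the uniform $2p$-moment estimate \eqref{ineq2}. A secondary technical point is ensuring that every attraction statement in part (2) is uniform with respect to the base variable $\tilde{\mathbb F}$, so that the projection of the global attractor of $\Pi$ really is the uniform attractor of the cocycle; here the uniformity over the compact hull $H(\mathbb F)$ recorded in Remark \ref{uesteq} is essential.
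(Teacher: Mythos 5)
Your part (1) is essentially the paper's own argument: both rest on the translation identity $\mathcal L(X_{\tilde{\mathbb F}}(s+t))=\mathcal L(X_{\sigma_{t}\tilde{\mathbb F}}(s))$, proved by observing that the shifted process is an $L^{2}$-bounded solution of the $\sigma_{t}\tilde{\mathbb F}$-equation and quoting uniqueness from Theorem \ref{Boundedth}; the paper then passes from this identity to the set equality just as tersely as you do ("Consequently, ..."), so on this half you and the paper coincide.

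In part (2), however, there is a genuine gap: nothing in your outline shows that $\overline{\cup_{\tilde{\mathbb F}\in H(\mathbb F)}\mathfrak{A}_{\tilde{\mathbb F}}}$ \emph{attracts}. The ingredients you invoke --- the uniform bounds \eqref{ineq2} and \eqref{compeq}, Chebyshev's inequality, the compact embedding $S\subset H$, uniform integrability of second moments --- deliver exactly two things: absorption of bounded sets of $Pr_{2}(H)$ into a ball $B_{r}$, and compactness of the candidate set, which is built solely from the laws $\mu_{\tilde{\mathbb F}}(t)$ of the $L^{2}$-bounded solutions. Absorption into a large ball plus compactness of a candidate set do not force $P^{*}(t,\tilde{\mathbb F},\mu)$, for a general $\mu\in B_{r}$, to approach that candidate set at all. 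The step the paper uses, and which never appears in your proposal, is Theorem \ref{gasms}: by the square-mean global asymptotic stability estimate \eqref{gaseq} (which holds uniformly over $H(\mathbb F)$ by Remark \ref{uesteq}), if $\mu=\mathcal L(\xi)\in B_{r}$ and $Y(t,\xi)$ solves the $\tilde{\mathbb F}$-equation starting from $\xi$, then
$d_{BL}\left(P^{*}(t,\tilde{\mathbb F},\mu),\mu_{\tilde{\mathbb F}}(t)\right)
\leq\left(E\|Y(t,\xi)-X_{\tilde{\mathbb F}}(t)\|^{2}\right)^{1/2}\rightarrow0$
as $t\rightarrow+\infty$, uniformly in $\tilde{\mathbb F}\in H(\mathbb F)$ and $\mu\in B_{r}$; since $\mu_{\tilde{\mathbb F}}(t)\in\mathfrak{A}_{\tilde{\mathbb F}}$, this is precisely what makes $\overline{\cup_{\tilde{\mathbb F}}\mathfrak{A}_{\tilde{\mathbb F}}}$ a compact \emph{uniformly attracting} set, and only then does Lemma \ref{conta} yield the global attractor $\mathfrak{A}=\omega\left(H(\mathbb F)\times\overline{\cup_{\tilde{\mathbb F}}\mathfrak{A}_{\tilde{\mathbb F}}}\right)$. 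Note that this stability estimate is where the structural hypotheses $2\lambda-2\lambda_{F}-L_{G}^{2}\geq0$ and ($\lambda'>0$ or strict inequality) actually do their work; your argument never uses them, which is a sign the dynamical core is missing.

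A secondary weakness: "minimality from that of $\mathfrak{A}$" is not a proof. Minimality of the global attractor of $\Pi$ is minimality among attracting sets in the product space $H(\mathbb F)\times Pr_{2}(H)$, whereas what Definition \ref{gadef} requires is minimality of $\Pi_{2}\mathfrak{A}$ among closed \emph{uniformly} attracting sets in the fiber $Pr_{2}(H)$; these are not the same statement. The paper supplies a separate argument, introducing the hull $\omega$-limit set $\omega_{H(\mathbb F)}(\Pi_{2}\mathfrak{A}):=\cap_{t\geq0}\overline{\cup_{\tilde{\mathbb F}\in H(\mathbb F)}\cup_{s\geq t}P^{*}(s,\tilde{\mathbb F},\Pi_{2}\mathfrak{A})}$, showing it is contained in every closed uniformly attracting set $\mathcal A_{1}$ and that $\Pi_{2}\mathfrak{A}\subset\omega_{H(\mathbb F)}(\Pi_{2}\mathfrak{A})$ because $\mathfrak{A}$ is itself an $\omega$-limit set; some argument of this kind is needed to finish.
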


\begin{proof}
(i) Given $t\in\R^{+}$ and $\tilde{\mathbb F}\in H(\mathbb F)$, let
$X_{\sigma_{t}\tilde{\mathbb F}}$ be the unique $L^{2}$-bounded
solution of equation
\[
\d X(s)=\left(A(X(s))+\tilde{F}(s+t,X(s))\right)\d s
+\tilde{G}(s+t,X(s))\d W(s).
\]
Note that $\L(X_{\tilde{\mathbb F}}(s+t))=\L(X_{\sigma_{t}
\tilde{\mathbb F}}(s))$ for all $s\in\R$. Consequently,
$P^{*}(t,\tilde{\mathbb F},\mathfrak{A}_{\tilde{\mathbb F}})
=\mathfrak{A}_{\sigma_{t}\tilde{\mathbb F}}$.

(ii)
As mentioned in Remark \ref{uesteq}, \eqref{compeq} in
Proposition \ref{tightprop} holds uniformly for all
$\tilde{\mathbb F}\in H(\mathbb F)$. Namely,
\[
\sup_{\tilde{\mathbb F}\in H(\mathbb F)}\sup_{t\in\R}\int_S
\|z\|^2\mu_{\tilde{\mathbb F}}(t)(\d z)<\infty.
\]
Then there exists a constant $R>0$ such that
\[
\bigcup_{\tilde{\mathbb F}\in H(\mathbb F)}\mathfrak{A}_{
\tilde{\mathbb F}}\subset\left\{\mu\in Pr_{2}(H):\int_{S}\|z\|^{2}
\mu(\d z)<R^{2}\right\}.
\]
According to the Chebychev's inequality and the compactness of the
inclusion $S\subset H$, $\overline{\cup_{\tilde{\mathbb F}\in
H(\mathbb F)}\mathfrak{A}_{\tilde{\mathbb F}}}$ is compact in
$Pr(H)$.

Let $r>0$ be an arbitrary constant. For any $\mu\in B_{r}$, take a
random variable $\xi$ such that $\L(\xi)=\mu$. Let $Y(t,\xi), t\geq0$
be the solution to
\[
Y(t,\xi)=\xi+\int_{0}^{t}\left(A(Y(s,\xi))+\tilde{F}(s,Y(s,\xi))
\right)\d s+\int_{0}^{t}\tilde{G}(s,Y(s,\xi))\d W(s).
\]
In view of Theorem \ref{gasms}, we have
\begin{equation*}
E\|Y(t,\xi)-X_{\tilde{\mathbb F}}(t)\|^{2}
\leq \begin{cases}
    E\|\xi-X_{\tilde{\mathbb F}}(s)\|_{H}^{2}\wedge \left\{\lambda'(r-2)
   (t-s)\right\}^{-\frac{2}{r-2}}, &\text{if $\lambda'>0$}\\
	{\rm{e}}^{-(2\lambda-2\lambda_{F}-L_{G}^{2})(t-s)}
    E\|\xi-X_{\tilde{\mathbb F}}(s)\|_{H}^{2},
   &\text{if $2\lambda-2\lambda_{F}-L_{G}^{2}>0$}.
  \end{cases}
\end{equation*}
Therefore, $\lim\limits_{t\rightarrow+\infty}\sup\limits_{\tilde
{\mathbb F}\in H(\mathbb F)}{\rm dist}_{Pr_{2}(H)}
\left(P^{*}(t,\tilde{\mathbb F},\mu),
\overline{\cup_{\tilde{\mathbb F}\in
H(\mathbb F)}\mathfrak{A}_{\tilde{\mathbb F}}}\right)=0$
uniformly with respect to $\mu\in B_{r}$, i.e.
$\overline{\cup_{\tilde{\mathbb F}\in H(\mathbb F)}
\mathfrak{A}_{\tilde{\mathbb F}}}$
is a compact uniformly attracting set. Obviously, $H(\mathbb F)\times
\overline{\cup_{\tilde{\mathbb F}\in H(\mathbb F)}
\mathfrak{A}_{\tilde{\mathbb F}}}$
is a compact attracting set for $\Pi$. By Lemma \ref{conta}, $\Pi$
admits a global attractor $\mathfrak{A}:=\omega\left(H(\mathbb F)
\times\overline{\cup_{\tilde{\mathbb F}\in H(\mathbb F)}
\mathfrak{A}_{\tilde{\mathbb F}}}\right)$.

Let us now prove that $\Pi_{2}\mathfrak{A}$ is the uniform attractor
of cocycle $P^{*}$. Let $B\subset Pr_{2}(H)$ be bounded, then
$H(\mathbb F)\times B$ is bounded in $H(\mathbb F)\times Pr_{2}(H)$.
Therefore,
\begin{align*}
{\rm dist_{Pr_{2}(H)}}(P^{*}(t,\tilde{\mathbb F},B),\Pi_{2}\mathfrak{A})
&
\leq{\rm dist_{\mathfrak{X}}}(H(\mathbb F)\times
P^{*}(t,\tilde{\mathbb F},B),\mathfrak{A})\\
&
={\rm dist_{\mathfrak{X}}}(\Pi(t,H(\mathbb F)\times B),\mathfrak{A})
\rightarrow 0 \quad {\rm as} \quad t\rightarrow+\infty.
\end{align*}
Next we verify the minimality property. Denote by
$\omega_{H(\mathbb F)}(B):=\cap_{t\geq0}\overline{\cup_{
\tilde{\mathbb F}\in H(\mathbb F)}\cup_{s\geq t}
P^{*}(s,\tilde{\mathbb F},B)}$. Then $\mu\in
\omega_{H(\mathbb F)}(B)$ if and only if there exist $\{\nu_{n}\}
\subset B$, $\{\mathbb F_{n}\}\subset H(\mathbb F)$ and
$\{t_{n}\}\subset\R_{+}$ such that $t_{n}\rightarrow+\infty$ and
$P^{*}(t_{n},\mathbb F_{n},\nu_{n})\rightarrow\mu$ as
$n\rightarrow+\infty$. Let $\mathcal A_{1}$ be a closed uniformly
attracting set. Then we show that $\omega_{H(\mathbb F)}
(\Pi_{2}\mathfrak{A})\subset\mathcal A_{1}$. Indeed, if this is
false, i.e. $\omega_{H(\mathbb F)}(\Pi_{2}\mathfrak{A})\not\subset
\mathcal A_{1}$. Take $\mu\in\omega_{H(\mathbb F)}
(\Pi_{2}\mathfrak{A})\setminus\mathcal A_{1}$, there exist
$\{\nu_{n}\}\subset \Pi_{2}\mathfrak{A}$,
$\{\mathbb F_{n}\}\subset H(\mathbb F)$ and
$\{t_{n}\}\subset\R_{+}$ such that $t_{n}\rightarrow+\infty$ and
$P^{*}(t_{n},\mathbb F_{n},\nu_{n})\rightarrow\mu$ as
$n\rightarrow+\infty$. Hence we have
\begin{align*}
0<d(\mu,\mathcal A_{1})
&
\leq\lim_{n\rightarrow+\infty}d(P^{*}(t_{n},\mathbb
F_{n},\nu_{n}),\mathcal A_{1})\\
&
\leq\lim_{n\rightarrow+\infty}{\rm dist_{Pr_{2}(H)}}
(P^{*}(t_{n},\mathbb F_{n},\Pi_{2}\mathfrak{A}),\mathcal A_{1})\\
&
\leq\lim_{n\rightarrow+\infty}\sup_{\tilde{\mathbb F}\in
H(\mathbb F)}{\rm dist_{Pr_{2}(H)}}(P^{*}(t_{n},\tilde{\mathbb F},
\Pi_{2}\mathfrak{A}),\mathcal A_{1})=0,
\end{align*}
a contradiction. On the other hand, for any
$(\tilde{\mathbb F},\mu)\in\omega
\left(H(\mathbb F)\times\Pi_{2}\mathfrak{A}\right)=\mathfrak{A}$,
there exist $\{\nu_{n}\}\subset\Pi_{2}\mathfrak{A}$,
$\{\mathbb F_{n}\}\subset H(\mathbb F)$, $\{t_{n}\}\subset\R_{+}$
such that $P^{*}(t_{n},\mathbb F_{n},\nu_{n})\rightarrow\mu$ and
$\sigma_{t_{n}}\mathbb F_{n}\rightarrow\tilde{\mathbb F}$ as
$n\rightarrow+\infty$. Then $\mu\in\omega_{H(\mathbb F)}
\left(\Pi_{2}\mathfrak{A}\right)$. Therefore,
$\Pi_{2}\mathfrak{A}\subset\mathcal{A}_{1}$.

The proof is complete.
\end{proof}

\begin{remark}\rm
It is known that $H(\mathbb F)$ is compact provided $\mathbb F$
is Birkhoff recurrent, which includes periodic, quasi-periodic,
almost periodic, almost automorphic as special cases.
\end{remark}

Next we prove the global averaging principle
for strongly monotone SPDEs.
\begin{theorem}\label{gath}
Suppose that $2\lambda-2\lambda_{F}-L_{G}^{2}\geq0$, {\rm(G1)--(G2)},
{\rm(H1)}, {\rm(H2$'$)} and {\rm(H3)--(H6)} hold.
Assume further that
$\lambda'>0$ or $2\lambda-2\lambda_{F}-L_{G}^{2}>0$.
If $H(\mathbb F)$ is compact, then
\begin{enumerate}
  \item the cocycle $P_{\varepsilon}^{*}$ associated with SPDE
        \eqref{eqG2.1} has a uniform attractor
        $\mathcal{A}^{\varepsilon}$ for any $0<\varepsilon\leq1$;
  \item the cocycle $\bar{P}^{*}$ associated with SPDE
        \eqref{eqG5_1} has a uniform attractor
        $\bar{\mathcal{A}}$, which is a singleton set;
  \item for arbitrary large $R_{1}$ and small $\rho>0$ there exist
        $\varepsilon_{0}=\varepsilon_{0}(R_{1},\rho)$ and
        $T=T(R_{1},\rho)$ such that for all $\varepsilon
        \leq\varepsilon_{0},~t\geq T$ and $\tilde{\mathbb F}\in H(\mathbb F)$
        \begin{equation}\label{gathbaine}
        P_{\varepsilon}^{*}(t,\tilde{\mathbb F},B_{R_{1}})
        \subset\mathcal{O}_{\rho}\left(\bar{\mathcal{A}}\right).
        \end{equation}
        In particular,
        \begin{equation}\label{gathlim}
        \lim_{\varepsilon\rightarrow0}{\rm dist}_{Pr_{2}(H)}\left(
        \mathcal{A}^{\varepsilon},\bar{\mathcal{A}}\right)=0.
        \end{equation}
\end{enumerate}
\end{theorem}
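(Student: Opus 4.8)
The plan is to dispatch (i) and (ii) directly from Proposition \ref{invlemm}, and then to prove the convergence (iii) by splicing together the first Bogolyubov theorem (Theorem \ref{avethf}) and the global asymptotic stability of \emph{both} equations (Theorem \ref{gasms}). For (i), note that equation \eqref{eqG2.1} is of the form \eqref{gaSPDE} with the time-scaled coefficient $\mathbb F_{\varepsilon}:=(F_{\varepsilon},G_{\varepsilon})$. The reparametrization $g\mapsto g(\cdot/\varepsilon,\cdot)$ is a homeomorphism commuting with translations, so $H(\mathbb F_{\varepsilon})$ is the continuous image of the compact set $H(\mathbb F)$ and hence compact; by Remark \ref{oaest}(ii) the scaled coefficients satisfy (H1), (H2$'$), (H3)--(H4), (H6) with the same constants. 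Proposition \ref{invlemm}(ii) then furnishes the uniform attractor $\mathcal A^{\varepsilon}$. For (ii), the averaged coefficient $\bar{\mathbb F}:=(\bar F,\bar G)$ is autonomous, so its hull is the singleton $\{\bar{\mathbb F}\}$; by Corollary \ref{averthcoro}(i) the $L^{2}$-bounded solution $\bar X$ is stationary in distribution, whence $\mathfrak A_{\bar{\mathbb F}}=\overline{\{\L(\bar X(t)):t\in\R\}}=\{\L(\bar X(0))\}$, and Proposition \ref{invlemm}(ii) gives $\bar{\mathcal A}=\Pi_{2}\bar{\mathfrak A}=\{\L(\bar X(0))\}$.

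For (iii), fix $R_{1}$ and $\rho$. Given $\mu\in B_{R_{1}}$, choose $\xi$ with $\L(\xi)=\mu$, so that $P_{\varepsilon}^{*}(t,\mathbb F,\mu)=\L(X_{\varepsilon}(t,0,\xi))$, where $X_{\varepsilon}(\cdot,0,\xi)$ solves \eqref{eqG2.1}. Since $\L(\bar X(t))=\L(\bar X(0))=\bar{\mathcal A}$, I would estimate $d_{BL}(\L(X_{\varepsilon}(t,0,\xi)),\L(\bar X(0)))$ by a triangle inequality into three pieces: (A) the global asymptotic stability of \eqref{eqG2.1}, reducing $X_{\varepsilon}(t,0,\xi)$ to the $L^{2}$-bounded solution $X_{\varepsilon}$; (B) the first Bogolyubov theorem on the trailing window $[t-T,t]$, comparing $X_{\varepsilon}$ with the averaged flow started from $X_{\varepsilon}(t-T)$; and (C) the global asymptotic stability of \eqref{eqG5_1} over $[t-T,t]$. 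Pieces (A) and (C) rest on estimate \eqref{gaseq} with the constant $2\lambda-2\lambda_{F}-L_{G}^{2}$ (which is the combined monotonicity of $A+F_{\varepsilon}$, respectively $A+\bar F$): either the data-independent rate $\{\lambda'(r-2)\cdot\}^{-2/(r-2)}$ when $\lambda'>0$, or the exponential rate otherwise; since $\sup_{s,\varepsilon}E\|X_{\varepsilon}(s)\|^{2}<\infty$ by Proposition \ref{tightprop} and Remark \ref{oaest}(ii), both pieces fall below $(\rho/3)^{2}$ once $T=T(R_{1},\rho)$ is large. For (B) I would feed the \emph{same} initial datum $X_{\varepsilon}(t-T)$ into both windows, so the $E\|\zeta^{\varepsilon}-\zeta\|^{2}$ term in \eqref{avethfeq15} vanishes and the remaining bound depends on the starting time only through $E\|X_{\varepsilon}(t-T)\|^{2}$; taking $\delta=\sqrt{\varepsilon}$ drives it below $(\rho/3)^{2}$ for $\varepsilon\le\varepsilon_{0}$, uniformly in $t\ge T$. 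Summing the three pieces yields \eqref{gathbaine}.

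The main obstacle is precisely the uniform-in-$t$ control required in step (B): Theorem \ref{avethf} only provides convergence on a fixed finite interval, whereas \eqref{gathbaine} must hold for the whole unbounded range $t\ge T$. The resolution is the trailing-window device combined with the observation (recorded in Remark \ref{uesteq}) that the bound \eqref{avethfeq15} is uniform in the starting time $s$ as soon as the initial $L^{2}$-norm is controlled; the uniform $L^{2}$-boundedness of $X_{\varepsilon}$ supplies exactly that control. Finally, to pass from \eqref{gathbaine} to \eqref{gathlim}, I would rerun the same three-piece estimate uniformly over $\tilde{\mathbb F}\in H(\mathbb F_{\varepsilon})$, which is legitimate because Remark \ref{uesteq} makes all the relevant estimates uniform over the hull. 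Since $\mathcal A^{\varepsilon}\subset B_{R_{1}}$ for $R_{1}\ge R$ and, by the structure of the uniform attractor in Proposition \ref{invlemm}(ii), every $\mu\in\mathcal A^{\varepsilon}=\omega_{H(\mathbb F_{\varepsilon})}(B_{R_{1}})$ arises as a limit $P_{\varepsilon}^{*}(t_{n},\mathbb F_{n},\nu_{n})\to\mu$ with $\nu_{n}\in B_{R_{1}}$ and $t_{n}\to+\infty$, the uniform version of \eqref{gathbaine} forces $d(\mu,\bar{\mathcal A})\le\rho$ for $\varepsilon\le\varepsilon_{0}$. Hence $\mathrm{dist}_{Pr_{2}(H)}(\mathcal A^{\varepsilon},\bar{\mathcal A})\le\rho$, and letting $\rho\to0$ gives \eqref{gathlim}.
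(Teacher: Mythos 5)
Your parts (i) and (ii) coincide with the paper's (both are read off from Proposition \ref{invlemm}), but your proof of (iii) takes a genuinely different route, and it is correct. The paper argues by iteration over windows of length $T$: it uses the dissipativity estimate \eqref{ineq2} to get absorption $P_{\varepsilon}^{*}(t,\mathbb F,B_{R_{1}})\subset B_{R_{1}}$, the attraction of $\bar{\mathcal{A}}$ to get $\bar{P}^{*}(t,B_{R_{1}})\subset\mathcal O_{\delta/2}(\bar{\mathcal{A}})$, and the finite-time comparison \eqref{avethfeq15} to land $P_{\varepsilon}^{*}(T,\mathbb F,\mu)$ in $\mathcal O_{\delta}(\bar{\mathcal{A}})\cap B_{R_{1}}$; it then combines a Lyapunov-type stability statement for the averaged flow (from Theorem \ref{gasms}: $\bar{P}^{*}\left(t,\mathcal O_{\delta}(\bar{\mathcal{A}})\right)\subset\mathcal O_{\rho/2}(\bar{\mathcal{A}})$ for all $t\geq0$) with the cocycle identity $P_{\varepsilon}^{*}(t+T,\mathbb F,\mu)=P_{\varepsilon}^{*}(t,\sigma_{T}\mathbb F,\mu_{1}^{\varepsilon})$ to propagate these inclusions inductively over successive windows, yielding \eqref{gathbaine} for all $t\geq T$. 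You instead give a direct, non-inductive argument: collapse the arbitrary initial measure onto the bounded solution $X_{\varepsilon}$ via the global asymptotic stability of the \emph{original} equation (piece (A)), run the Bogolyubov estimate once on the trailing window $[t-T,t]$ with identical initial data so the initial-error term in \eqref{avethfeq15} vanishes (piece (B)), and collapse onto $\bar{X}$ via stability of the averaged equation (piece (C)). The trade-off is that your scheme leans on asymptotic stability of the perturbed equation itself, uniformly in $\varepsilon$ — which the standing monotonicity hypotheses do supply, with constants independent of $\varepsilon$ by Remark \ref{oaest} — whereas the paper's iteration needs only absorption for the perturbed equation plus stability and attraction for the averaged one, the more robust scheme familiar from deterministic global averaging; in exchange your proof avoids the induction entirely. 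Your uniformity analysis for piece (B) is the right point to worry about and is justified: the constants in Lemma \ref{orstes} and Theorem \ref{avethf} depend on the window length but not its location, precisely because (G1)--(G2) are uniform in $t$. Finally, both routes need hull-uniform estimates (Remark \ref{uesteq}) to pass from \eqref{gathbaine} to \eqref{gathlim}; your derivation via the $\omega$-limit characterization of $\mathcal{A}^{\varepsilon}$ from Proposition \ref{invlemm} is sound, and is essentially what the paper's terse appeal to Definition \ref{gadef} amounts to (equivalently, one may invoke minimality of the uniform attractor against the closed uniformly attracting set $\overline{\mathcal O_{\rho}(\bar{\mathcal{A}})}$).
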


\begin{proof}
(i)--(ii)
It follows from Proposition \ref{invlemm} that
$P^{*}_{\varepsilon}$ and $\bar{P}^{*}$ admit uniform attractors,
and $\bar{\mathcal{A}}=\{\L(\bar{X}(0))\}\in Pr_{2}(H)$.
Here $\bar{X}(t),t\in\R$ is the unique
stationary solution to averaged equation \eqref{eqG5_1}.

(iii) It follows from Theorem \ref{gasms} that there exists $\delta$,
$0<\delta<\frac{\rho}{2}$ such that
\begin{equation}\label{apf1}
\bar{P}^{*}\left(t,\mathcal O_{\delta}(\bar{\mathcal{A}})\right)
\subset\mathcal O_{\frac{\rho}{2}}(\bar{\mathcal{A}})
\end{equation}
for all $t\geq0$. Fix $R_{1}$ large enough.
In view of \eqref{ineq2}, there exists
$T_{0}>0$ such that
\begin{equation}\label{spf}
P_{\varepsilon}^{*}(t,\tilde{\mathbb F},B_{R_{1}})\subset B_{R_{1}}
\end{equation}
for all $t\geq T_{0}$. Since $\bar{\mathcal{A}}$ is attractor,
we can choose $T_{1}=T_{1}(R_{1},\rho)$ so large such that
\begin{equation}\label{apf2}
\bar{P}^{*}(t,B_{R_{1}})\subset
\mathcal O_{\frac{\delta}{2}}(\bar{\mathcal{A}})
\end{equation}
for all $t\geq T_{1}$.  Denote by
$T:=\max\{T_{0},T_{1}\}$. Employing \eqref{avethfeq15}, we have
\begin{equation}\label{dpf}
\sup_{0\leq t\leq T}d\left(P_{\varepsilon}^{*}(t,
\tilde{\mathbb F},\mu),\bar{P}^{*}(t,\mu)\right)
<\eta(T,R_{1})(\varepsilon)
\end{equation}
for all $\mu\in B_{R_{1}}$ and $\tilde{\mathbb F}\in H(\mathbb F)$,
where $\eta(T,R_{1})(\varepsilon)\rightarrow0$
as $\varepsilon\rightarrow0$. Then, there exists $\varepsilon_{0}=
\varepsilon_{0}(T,R_{1})$ such that
$\eta(T,R_{1})(\varepsilon)<\frac{\delta}{2}$ for all
$\varepsilon\leq\varepsilon_{0}$.

For any $\mu\in B_{R_{1}}$, in view of \eqref{spf}--\eqref{dpf},
we have
\begin{equation*}
P_{\varepsilon}^{*}(T,\tilde{\mathbb F},\mu)\in \mathcal O_{\delta}
(\bar{\mathcal{A}})\cap B_{R_{1}}
\end{equation*}
for all $\varepsilon\leq\varepsilon_{0}$. It can be verified that
$P_{\varepsilon}^{*}(t,\tilde{\mathbb F},\mu)\in \mathcal O_{\rho}
(\bar{\mathcal{A}})$ for all $t\geq T$ and
$\varepsilon\leq\varepsilon_{0}$. To this end, define
$\mu_{1}^{\varepsilon}:=P_{\varepsilon}^{*}(T,\tilde{\mathbb F},\mu)$.
Then $\bar{P}^{*}(t,\mu_{1}^{\varepsilon})\in
\mathcal O_{\frac{\rho}{2}}(\bar{\mathcal{A}})$ and
$P_{\varepsilon}^{*}(t+T,\tilde{\mathbb F},\mu)
=P_{\varepsilon}^{*}(t,\sigma_{T}\tilde{\mathbb F},\mu_{1}^{\varepsilon})$
for all $t\geq0$. Therefore, according to \eqref{apf2}--\eqref{dpf},
we get
\[
P_{\varepsilon}^{*}(2T,\tilde{\mathbb F},\mu)\in\mathcal O_{\delta}
\left(\bar{\mathcal{A}}\right)\cap B_{R_{1}}
\]
and
\[
P_{\varepsilon}^{*}(t+T,\tilde{\mathbb F},\mu)\in
\mathcal O_{\frac{\rho}{2}+\frac{\delta}{2}}(\bar{\mathcal{A}})
\subset \mathcal O_{\rho}(\bar{\mathcal{A}})
\]
for all $t\in[0,T]$. Repeating the above procedure, we have
\[
P_{\varepsilon}^{*}(t,\tilde{\mathbb F},\mu)\in
\mathcal O_{\rho}(\bar{\mathcal{A}})
\]
for all $t\geq T$ and $\varepsilon\leq\varepsilon_{0}$.

Take $R_{1}$ large enough so that $\mathcal{A}^{\varepsilon}\subset
B_{R_{1}}$, then \eqref{gathlim} follows from \eqref{gathbaine} and
Definition \ref{gadef}.
\end{proof}

\section{Applications}
In this section, we illustrate our theoretical results
by two examples. We mainly consider the additive or linear
multiplicative noise in these examples for brevity. Let
$\Lambda\subset\R^{n},n\in\mathbb N$ be an open bounded subset and
$0<\varepsilon\leq1$. Denote by $f^{+}(t):=\max\{f(t),0\}$
for all $t\in\R$ and $\lambda_{*}$ the first eigenvalue of $-\Delta$
with the Dirichlet boundary condition.

\subsection{Stochastic reaction diffusion equations}
Consider the equation
\begin{equation}\label{SCIeq}
 \d u=\left(\Delta u-au|u|^{p-2}+\phi(t/\varepsilon)u+g(t/\varepsilon)\right)
 \d t+\kappa u\d W(t),
\end{equation}
where $W(\cdot)$ is a two-sided standard real-valued Wiener
process, $p\in[2,+\infty)$ and $g\in C_{b}(\R,H_{0}^{1,2}(\Lambda))$.
Here $a>0$ and $\kappa\in\R$ are constants.
We define $V_{1}:=H_{0}^{1,2}(\Lambda)$, $V_{2}:=L^{p}(\Lambda)$,
$H:=L^{2}(\Lambda)$,  $V:=V_{1}\cap V_{2}$ and
\[
A_{1}(u):=\Delta u,\quad A_{2}(u):=-au|u|^{p-2},\quad
F(t,u):=\phi(t)u+g(t), \quad G(t,u)=\kappa u.
\]
Assume that $\lambda_{*}-|\phi^{+}|_{\infty}-\frac{\kappa^{2}}{2}>0$,
then we have the following theorem.

\begin{theorem}\label{CIth}
\begin{itemize}
  \item [(1)] There exists a unique $L^{2}$-bounded
      solution $X_{\varepsilon}(\cdot)$ to equation \eqref{SCIeq}
      which is globally asymptotically stable in square-mean sense
      for any $0<\varepsilon\leq1$.
  \item[(2)] If $\phi$ is almost automorphic and $g$ is almost
      periodic, then the $L^{2}$-bounded solution
      $X_{\varepsilon}(\cdot)$ is almost automorphic in
      distribution.
  \item[(3)] Let $\bar{X}$ be the unique stationary solution of
      the following averaged equation
  \begin{equation}\label{GLeqave}
  \d u=\left(\Delta u-a|u|^{p-2}u+\bar{\phi}u+\bar{g}\right)\d t
   +\kappa u\d W(t),
  \end{equation}
  where
  $\bar{\phi}=\lim\limits_{T\rightarrow\infty}\frac{1}{T}
  \int_{t}^{t+T}\phi(s)\d s$ and
  $\bar{g}=\lim\limits_{T\rightarrow\infty}\frac{1}{T}
  \int_{t}^{t+T}g(s)\d s$
  uniformly for all $t\in\R$.
  Then
  \[
  \lim\limits_{\varepsilon \to 0}d_{BL}
  (\mathcal L(X_{\varepsilon}),\mathcal L(\bar{X}))=0
  \quad {\rm{in}}~Pr(C(\R,L^{2}(\Lambda))).
  \]
  \item[(4)] The cocycle $P^{*}_{\varepsilon}$ generated by
  equation \eqref{SCIeq} has a uniform attractor
  $\mathcal{A}^{\varepsilon}$, and
  $$\lim_{\varepsilon\rightarrow0}{\rm dist}_{Pr_{2}(H)}\left(
  \mathcal{A}^{\varepsilon},\bar{\mathcal{A}}\right)=0.$$
  Here $\bar{\mathcal{A}}:=\L(\bar{X}(0))$ is the attractor
  for $\bar{P}^{*}$and $H:=L^{2}(\Lambda)$.
\end{itemize}
\end{theorem}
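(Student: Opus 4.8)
The approach is to realize \eqref{SCIeq} as a special case of the abstract monotone SPDE studied in Sections 3--4 and then to extract the four assertions directly from the general theorems. Concretely, the plan is to work on the Gelfand triple generated by $V_{1}=H_{0}^{1,2}(\Lambda)$, $V_{2}=L^{p}(\Lambda)$, $H=L^{2}(\Lambda)$, $V=V_{1}\cap V_{2}$, with
\[
A_{1}(u)=\Delta u,\quad A_{2}(u)=-au|u|^{p-2},\quad F(t,u)=\phi(t)u+g(t),\quad G(t,u)=\kappa u,
\]
and to verify hypotheses (H1), (H2$'$), (H3)--(H6) and (G1)--(G2), after which Theorems \ref{Boundedth}, \ref{gasms}, \ref{averth} and \ref{gath} together with Corollary \ref{averthcoro} yield (1)--(4).

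First I would check the structural conditions. Linearity and boundedness of $\Delta:V_{1}\to V_{1}^{*}$ give hemicontinuity of $A_{1}$, while $A_{2}$ is demicontinuous from $L^{p}$ into $L^{p/(p-1)}$ and $G$ is continuous, so (H1) holds; since $A_{2}$ and $G$ do not depend on $t$, (H5) amounts to the continuity of $\phi$ and $g$. Coercivity (H3) follows from
\[
{}_{V^{*}}\langle A(v),v\rangle_{V}=-\|v\|_{V_{1}}^{2}-a\|v\|_{V_{2}}^{p},
\]
so that $\alpha_{1}=2$, $\alpha_{2}=p$, $c_{1}=0$, $c_{2}=1$, $c_{2}'=a$; the bounds $\|\Delta v\|_{V_{1}^{*}}\le\|v\|_{V_{1}}$, $\|A_{2}(v)\|_{V_{2}^{*}}=a\|v\|_{V_{2}}^{p-1}$ and $\|G(t,0)\|=0$ give (H4). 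The decisive point is the constant bookkeeping in (H2$'$): the Poincar\'e inequality $\|\nabla w\|_{L^{2}}^{2}\ge\lambda_{*}\|w\|^{2}$ and the monotonicity of $x\mapsto ax|x|^{p-2}$ yield ${}_{V^{*}}\langle A(u)-A(v),u-v\rangle_{V}\le-\lambda_{*}\|u-v\|^{2}$, hence $\lambda=\lambda_{*}$ and $\lambda'=0$; moreover $\langle F(t,u)-F(t,v),u-v\rangle=\phi(t)\|u-v\|^{2}\le|\phi^{+}|_{\infty}\|u-v\|^{2}$ gives $\lambda_{F}=|\phi^{+}|_{\infty}$ and $L_{F}=\|\phi\|_{\infty}$ (with $\|F(t,0)\|=\|g(t)\|\le\|g\|_{\infty}$), while $L_{G}=|\kappa|$. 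Consequently $2\lambda-2\lambda_{F}-L_{G}^{2}=2(\lambda_{*}-|\phi^{+}|_{\infty}-\kappa^{2}/2)>0$ is exactly the standing assumption, so we always sit in the strict branch of the abstract results with $\lambda'=0$.

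The genuinely technical step is (H6), which furnishes the uniform $S$-bound and thus tightness. I would take $S=H_{0}^{1,2}(\Lambda)$, so that $V\subset S$ continuously and $S\subset H$ compactly by Rellich--Kondrachov, and construct the operators $T_{n}$ from the spectral truncations of $-\Delta$ along its Dirichlet eigenbasis, as in \cite{Liu2010, CML2020}. Because such $T_{n}$ commute with $\Delta$, one computes $2{}_{V^{*}}\langle A(v),T_{n}v\rangle_{V}\le-2\lambda_{*}\|v\|_{n}^{2}$ (the $A_{2}$-contribution being nonpositive), $2\langle F(t,v),T_{n}v\rangle\le 2|\phi^{+}|_{\infty}\|v\|_{n}^{2}+C$ by Young's inequality applied to the $g$-term, and $\|G(t,v)\|_{L_{2}(U,H_{n})}^{2}=\kappa^{2}\|v\|_{n}^{2}$; summing gives the required inequality with $c_{4}=2(\lambda_{*}-|\phi^{+}|_{\infty}-\kappa^{2}/2)>0$, so once more the same threshold drives the estimate. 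This spectral verification is where the real work lies. For the averaging conditions, $G$ is autonomous, whence $\bar{G}=G$ and (G2) holds with $\omega_{2}\equiv0$; and (G1) reduces to the uniform-in-$t$ convergence of the means of $\phi$ and $g$, which is precisely the hypothesis defining $\bar{\phi}$ and $\bar{g}$, giving $\bar{F}(u)=\bar{\phi}u+\bar{g}$ and (G1) with a suitable $\omega_{1}\in\Psi$.

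With every hypothesis verified, the conclusions follow by invocation. Part (1) is Theorems \ref{Boundedth} and \ref{gasms}. For part (2), $G$ is stationary and $t\mapsto\phi(t)u+g(t)$ is almost automorphic, since the sum of an almost automorphic function ($\phi$) and an almost periodic one ($g$) is again almost automorphic; Corollary \ref{averthcoro}(i) then gives almost automorphy in distribution. Part (3) is the convergence in Theorem \ref{averth}, equivalently Corollary \ref{averthcoro}(iii). For part (4), $\phi$ (almost automorphic) and $g$ (almost periodic) are Lagrange stable, so their hulls are relatively compact; as $G$ is autonomous, $H(G)$ is a singleton and the joint hull $H(\mathbb F)$ is a closed subset of the compact product $\overline{\{F^{\tau}\}}\times\{G\}$, hence compact. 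Theorem \ref{gath} then provides the uniform attractors $\mathcal A^{\varepsilon}$ and the singleton $\bar{\mathcal A}=\{\L(\bar X(0))\}$ together with ${\rm dist}_{Pr_{2}(H)}(\mathcal A^{\varepsilon},\bar{\mathcal A})\to0$ as $\varepsilon\to0$.
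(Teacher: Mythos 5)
Your overall strategy coincides with the paper's: realize \eqref{SCIeq} in the Gelfand triple $V_{1}=H_{0}^{1,2}(\Lambda)$, $V_{2}=L^{p}(\Lambda)$, $H=L^{2}(\Lambda)$, verify (H1), (H2$'$), (H3)--(H6) and (G1)--(G2) with exactly the constants you list, then quote Theorems \ref{Boundedth}, \ref{gasms}, \ref{averth}, \ref{gath} and Corollary \ref{averthcoro}. Your treatment of (H1)--(H5) and (G1)--(G2) agrees with the paper's proof. However, there is a genuine gap in your verification of (H6) --- precisely the step you call ``where the real work lies''. You take $T_{n}$ to be spectral truncations of $-\Delta$ in the Dirichlet eigenbasis and assert that the $A_{2}$-contribution ${}_{V_{2}^{*}}\langle -au|u|^{p-2},T_{n}u\rangle_{V_{2}}$ is nonpositive ``because $T_{n}$ commutes with $\Delta$''. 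Commutation disposes of the linear term only. The nonlinearity $u\mapsto au|u|^{p-2}$ is monotone in the pointwise (order-theoretic) sense, so the sign of its pairing with $T_{n}u$ requires $T_{n}$ to be compatible with the order structure of $L^{p}(\Lambda)$, not with the spectral decomposition. Concretely, writing the truncation as $T_{n}=nI-nR_{n}$ with $R_{n}:=\sum_{k}\left(1-\lambda_{k}/n\right)^{+}\langle\cdot,e_{k}\rangle e_{k}$, the desired sign reduces, via H\"older, to the contraction bound $\|R_{n}u\|_{L^{p}}\leq\|u\|_{L^{p}}$; but $R_{n}$ is an order-one Bochner--Riesz mean of the Dirichlet Laplacian, which is neither positivity preserving nor an $L^{p}$-contraction (in higher dimensions it need not even be bounded on all $L^{p}$), so neither the claimed sign nor the required continuity $T_{n}:V\rightarrow V$ follows. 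Note also that \cite{Liu2010} and \cite{CML2020} do not use spectral truncations, so the appeal to these references does not repair the step.

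The paper's proof instead takes $T_{n}:=-\Delta\left(I-\frac{\Delta}{n}\right)^{-1}=n\left(I-(I-\frac{\Delta}{n})^{-1}\right)$ and exploits the representation $(I-\frac{\Delta}{n})^{-1}u=\int_{0}^{\infty}{\rm{e}}^{-t}P_{\frac{t}{n}}u\,\d t$, where $\{P_{t}\}_{t\geq0}$ is the heat semigroup, which is contractive (and positivity preserving) on $L^{p}(\Lambda)$. This yields both the continuity of $T_{n}$ on $L^{p}(\Lambda)$ and the key sign:
\begin{equation*}
{}_{V_{2}^{*}}\langle A_{2}(u),T_{n}u\rangle_{V_{2}}
=n\int_{0}^{\infty}{\rm{e}}^{-t}\left(\int_{\Lambda}-au(\xi)|u(\xi)|^{p-2}
\left(u(\xi)-P_{\frac{t}{n}}u(\xi)\right)\d\xi\right)\d t\leq0,
\end{equation*}
since $\int_{\Lambda}u|u|^{p-2}P_{\frac{t}{n}}u\,\d\xi\leq\|u\|_{L^{p}}^{p-1}\|P_{\frac{t}{n}}u\|_{L^{p}}\leq\|u\|_{L^{p}}^{p}$ by H\"older and contractivity. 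This is exactly the mechanism your construction cannot reproduce. If you replace the spectral truncations by this resolvent construction, the remainder of your argument --- the threshold $2(\lambda_{*}-|\phi^{+}|_{\infty}-\kappa^{2}/2)>0$ after absorbing the noise term, the hull-compactness argument for part (4), and all the invocations of the abstract theorems --- goes through and coincides with the paper's proof.
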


\begin{proof}
(1)--(2) It suffices to show that conditions of Theorem \ref{gasms}
and Corollary \ref{corL2*} hold.

(H1) $A_{1}$ is obviously hemicontinuous.
We now prove that $A_{2}$ is hemicontinuous.
Let $u$, $v$, $w\in V$. For $\theta\in\mathbb{R}$,
without loss of generality, we assume $|\theta|\leq1$, then we have
\begin{align}\label{appien1}
   & ~_{V_{2}^{*}}\langle A_{2}(u+\theta v)-A_{2}(u),w\rangle_{V_{2}} \\\nonumber
   & =\int_{\Lambda}\left(-\left(u(\xi)+\theta v(\xi)\right)
   |u(\xi)+\theta v(\xi)|^{p-2}w(\xi)+u(\xi)|u(\xi)|^{p-2}
   w(\xi)\right)\d\xi\\\nonumber
   & \leq\int_{\Lambda}\left(4\left(|u(\xi)|^{p-1}
   +|v(\xi)|^{p-1}\right)|w(\xi)|
   +|u(\xi)|^{p-1}|w(\xi)|\right)\d\xi<\infty.
\end{align}
The last inequality holds since $u$, $v$, $w\in L^{p}(\Lambda)$. Then $~_{V_{2}^{*}}\langle A_{2}(u+\theta v)-A_{2}(u),w\rangle_{V_{2}}$
converges to zero as $\theta\rightarrow0$ by Lebesgue's dominated
convergence theorem. So, (H1) holds.

(H2$'$) For all $u,v\in V$ and $t\in\R$
\begin{align*}
   ~_{V_{1}^{*}}\langle A_{1}(u)-A_{1}(v),u-v\rangle_{V_{1}}
   \leq-\lambda_{*}\|u-v\|_{H}^{2},
\end{align*}
\begin{equation*}
  ~_{V_{2}^{*}}\langle A_{2}(u)-A_{2}(v),u-v\rangle_{V_{2}}
  =-a\int_{\Lambda}\left(u(\xi)|u(\xi)|^{p-2}-v(\xi)|v(\xi)|^{p-2}
  \right)\left(u(\xi)-v(\xi)\right)\d\xi\leq0,
\end{equation*}
\[\langle F(t,u)-F(t,v),u-v\rangle
\leq|\phi^{+}|_{\infty}\|u-v\|_{H}^{2},\quad
\|F(t,0)\|_{H}\leq\sup_{t\in\R}\|g(t)\|_{H_{0}^{1,2}(\Lambda)},
\]
\[
\|F(t,u)-F(t,v)\|_{H}\leq|\phi|_{\infty}\|u-v\|_{H}\quad
{\rm{and}}\quad \|\kappa u-\kappa v\|^{2}\leq\kappa^{2}\|u-v\|^{2}.
\]
So (H2$'$) holds with $\lambda=\lambda_{*}$,  $\lambda'=0$,
$\lambda_{F}=|\phi^{+}|_{\infty}$, $L_{F}=|\phi|_{\infty}$
and $L_{G}=|\kappa|$.

(H3) For all $v\in V$, $t\in\mathbb{R}$ we have
\begin{equation*}
  ~_{V_{1}^{*}}\langle A_{1}(v),v\rangle_{V_{1}}
=-\int_{\Lambda}|\nabla v(\xi)|^{2}\d\xi
=\|v\|_{H}^{2}-\|v\|^{2}_{V_{1}},
\end{equation*}
\begin{equation*}
 ~_{V_{2}^{*}}\langle A_{2}(v),v\rangle_{V_{2}}
 = -a\int_{\Lambda}|v(\xi)|^{p}\d\xi =-a\|v\|^{p}_{V_{2}}.
\end{equation*}
Then (H3) holds with $\alpha_{1}=2$, $\alpha_{2}=p$.

(H4) For all $u$, $v\in V$, $t\in\R$ we have
\begin{equation*}
\left|_{V_{1}^{*}}\langle A_{1}(u),v\rangle_{V_{1}}\right|
\leq\|\nabla u\|_{H}\|\nabla v\|_{H}
\leq\|u\|_{V_{1}}\|v\|_{V_{1}},
\end{equation*}
\begin{equation*}
   \left|_{V_{2}^{*}}\langle A_{2}(u),v\rangle_{V_{2}}\right|
   =\left|a\int_{\Lambda}-u(\xi)|u(\xi)|^{p-2}v(\xi)\d\xi\right|
    \leq a\|u\|_{V_{2}}^{p-1}\|v\|_{V_{2}}.
\end{equation*}
Therefore, we get
$\|A_{1}(u)\|_{V_{1}^{*}}\leq\|u\|_{V_{1}}$ and
$\|A_{2}(u)\|_{V_{2}^{*}}\leq a\|u\|^{p-1}_{V_{2}}$.

(2) In order to prove the almost automorphic property of the
$L^2$-bounded solution, it suffices to show that (H5)--(H6) holds.
To this end, let $S:=H_{0}^{1,2}(\Lambda)$, we define
$T_{n}=-\Delta\left(I-\frac{\Delta}{n}\right)^{-1}
=n(I-(I-\frac{\Delta}{n})^{-1})$.
Note that $T_{n}$ are continuous on $W_{0}^{1,2}(\Lambda)$.
Since the heat semigroup
$\{P_{t}\}_{t\geq0}$ (generated by $\Delta$) is contractive on
$L^{p}(\Lambda)$, $p>1$ and
$(I-\frac{\Delta}{n})^{-1}u=\int_{0}^{\infty}{\rm{e}}^{-t}
P_{\frac{t}{n}}u\d t$, $T_{n}$ are continuous on $L^{p}(\Lambda)$.

For all $u\in V$, $t\in\R$ we have
\[
  ~_{V_{1}^{*}}\langle\Delta u,T_{n}u\rangle_{V_{1}}
   \leq-\lambda_{*}\|u\|_{n}^{2}
\quad {\rm and}\quad
\phi(t)\langle u,T_{n}u\rangle_{H}=\phi(t)\|u\|_{n}^{2}
\leq |\phi^{+}|_{\infty}\|u\|_{n}^{2}.
\]
In view of the contractivity of $\{P_{t}\}_{t\geq0}$ on
$L^{p}(\Lambda)$, we have
\begin{align*}
_{V_{2}^{*}}\langle A_{2}(u),T_{n}u\rangle_{V_{2}}
&
=\langle-a|u|^{p-2}u,nu-n\left(I-\frac{\Delta}{n}\right)^{-1}
u\rangle\\
&
=n\int_{0}^{\infty}{\rm{e}}^{-t}\left(\int_{\Lambda}
-au(\xi)|u(\xi)|^{p-2}\left(u(\xi)
-P_{\frac{t}{n}}u(\xi)\right)d\xi\right)\d t\leq0.
\end{align*}
Then we obtain
\begin{align*}
2_{V^{*}}\langle A(t,u),T_{n}u\rangle_{V}
+2\langle F(t,u),T_{n}u\rangle
\leq-2\left(\lambda_{*}-|\phi^{+}|_{\infty}-\varepsilon\right)
\|u\|_{n}^{2}+C_{\varepsilon}\sup_{t\in\R}\|g(t)\|_{S}^{2}.
\end{align*}
That is, (H6) holds. And (H5) is obviuos.

(3)--(4) follows from Corollary \ref{averthcoro} and
Theorem \ref{gath}.
\end{proof}

\begin{remark}\rm
\begin{enumerate}
\item We mention that the first Bogolyubov theorem was also studied for reaction-diffusion equations with
polynomial nonlinearities by Cerrai \cite{Cerr2011} and Gao \cite{Gao}.
\item
Note that equation \eqref{SCIeq} is the real Ginzburg-Landau
equation when $p=4$.
\end{enumerate}
\end{remark}

\subsection{Stochastic generalized porous media equations}
Consider the equation
\begin{equation}\label{spme}
  \d u=\left(\Delta(|u|^{p-2}u+a u)
  +\phi\left(t/\varepsilon\right)u\right)\d t+G\d W(t),
\end{equation}
where $W(\cdot)$ is a two-sided cylindrical $Q$-Wiener process
with $Q=I$ on $L^{p}(\Lambda)$, $p>2$ and $a\geq0$,
$G\in L_{2}(L^{p}(\Lambda))$.
And there exists a constant $C_{1}>0$ such that $\phi(t)<-C_{1}$
for all $t\in\R$. We define
$$V:=L^{p}(\Lambda)\subset H:=W_{0}^{-1,2}(\Lambda)\subset V^{*}.$$

\begin{theorem}
\begin{enumerate}
\item[(1)] There exists a unique $L^{2}$-bounded solution
    $X_{\varepsilon}(\cdot)$ to equation \eqref{spme}, which is
  globally asymptotically stable in square-mean sense.
\item[(2)] The $L^{2}$-bounded
    solution $X_{\varepsilon}(\cdot)$ is almost periodic
    in distribution provided $\phi$ is almost periodic.
  \item[(3)] Let $\bar{X}$ be the unique stationary solution of
      averaged equation
  \begin{equation}\label{GLeqave}
  \d u=\left(-\Delta\left(|u|^{p-2}u+au\right)
  +\bar{\phi}u\right)\d t+G\d W(t),
  \end{equation}
  where
  $\bar{\phi}=\lim\limits_{T\rightarrow\infty}\frac{1}{T}
  \int_{t}^{t+T}\phi(s)\d s$
  uniformly for all $t\in\R$.
  Then
  \[
  \lim\limits_{\varepsilon \to 0}d_{BL}
  (\mathcal L(X_{\varepsilon}),\mathcal L(\bar{X}))=0
  \quad {\rm{in}}~Pr(C(\R,H)).
  \]
  \item[(4)] The cocycle $P^{*}_{\varepsilon}$ generated by
  equation \eqref{spme} has a uniform attractor
  $\mathcal{A}^{\varepsilon}$, and
  $$\lim_{\varepsilon\rightarrow0}{\rm dist}_{Pr_{2}(H)}\left(
  \mathcal{A}^{\varepsilon},\bar{\mathcal{A}}\right)=0.$$
  Here $\bar{\mathcal{A}}:=\L(\bar{X}(0))$ is the attractor
  for $\bar{P}^{*}$.
\end{enumerate}
\end{theorem}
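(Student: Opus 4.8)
The plan is to verify that the porous-media data fit the abstract framework and then invoke the results of Sections 3--4 verbatim, exactly as in the proof of Theorem \ref{CIth}. Concretely, with $H=W_{0}^{-1,2}(\Lambda)$, $V=L^{p}(\Lambda)$, $A(u)=\Delta\Psi(u)$ where $\Psi(r):=|r|^{p-2}r+ar$, $F(t,u)=\phi(t)u$ and $G(t,u)\equiv G$, I would check (H1), (H2$'$), (H3)--(H6) and (G1)--(G2) together with $\lambda'>0$, $2\lambda-2\lambda_{F}-L_{G}^{2}>0$ and the compactness of $H(\mathbb F)$; then (1)--(2) follow from Theorem \ref{gasms} and Corollary \ref{averthcoro}(i), (3) from Corollary \ref{averthcoro}(iii), and (4) from Theorem \ref{gath}. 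The structural novelty relative to Theorem \ref{CIth} is that the base space is now the negative-order space $H=W_{0}^{-1,2}(\Lambda)$ and that dissipativity is supplied by the damping $\phi<-C_{1}$, not by the principal part.

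For the monotonicity I would use the porous-media pairing identity $~_{V^{*}}\langle\Delta\Psi(u)-\Delta\Psi(v),u-v\rangle_{V}=-\int_{\Lambda}(\Psi(u)-\Psi(v))(u-v)\,\d\xi$. Pointwise monotonicity of $\Psi$ gives $(\Psi(u)-\Psi(v))(u-v)\geq a|u-v|^{2}+c_{p}|u-v|^{p}$, and since $\Lambda$ is bounded we have the continuous embeddings $L^{p}(\Lambda)\subset L^{2}(\Lambda)\subset W_{0}^{-1,2}(\Lambda)$, whence $\|w\|_{H}\leq C\|w\|_{L^{p}}$ and therefore $~_{V^{*}}\langle A(u)-A(v),u-v\rangle_{V}\leq-c_{p}C^{-p}\|u-v\|_{H}^{p}$. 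This is (H2$'$) for $A$ with $\lambda=0$, $\lambda'=c_{p}C^{-p}>0$ and $r=p>2$, so the ``$\lambda'>0$'' branch of every abstract theorem applies. The remaining pieces of (H2$'$) are immediate: $\langle F(t,u)-F(t,v),u-v\rangle_{H}=\phi(t)\|u-v\|_{H}^{2}\leq-C_{1}\|u-v\|_{H}^{2}$ gives $\lambda_{F}=\sup_{t\in\R}\phi(t)\leq-C_{1}$, the $F$-Lipschitz bound holds with $L_{F}=|\phi|_{\infty}$, and $G$ constant gives $L_{G}=0$ and $\|G(t,0)\|_{L_{2}}=\|G\|_{L_{2}}$; thus $2\lambda-2\lambda_{F}-L_{G}^{2}\geq2C_{1}>0$. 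Coercivity (H3) and boundedness (H4) follow from $~_{V^{*}}\langle A(u),u\rangle_{V}=-\|u\|_{L^{p}}^{p}-a\|u\|_{L^{2}}^{2}\leq-\|u\|_{V}^{p}$ and $\|A(u)\|_{V^{*}}\leq\|\Psi(u)\|_{L^{p/(p-1)}}\leq\|u\|_{V}^{p-1}+Ca\|u\|_{V}$ (with $\alpha=p$), and hemicontinuity (H1) from continuity of $\Psi$ and dominated convergence, just as in \eqref{appien1}. Condition (H5) is clear since $G$ is constant and $\phi$ is uniformly continuous.

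The decisive step is (H6), in the form restated in Remark \ref{oaest}(i). I would take $S:=L^{2}(\Lambda)$, compactly embedded in $H=W_{0}^{-1,2}(\Lambda)$ on the bounded domain, and $T_{n}:=-\Delta(I-\tfrac{\Delta}{n})^{-1}=n\big(I-R_{n}\big)$ with $R_{n}:=(I-\tfrac{\Delta}{n})^{-1}=n\int_{0}^{\infty}{\rm e}^{-nt}P_{t}\,\d t$. One verifies that $\langle x,T_{n}y\rangle_{H}=\langle R_{n}x,y\rangle_{L^{2}}$, so $\|x\|_{n}^{2}=\langle R_{n}x,x\rangle_{L^{2}}\uparrow\|x\|_{L^{2}}^{2}=\|x\|_{S}^{2}$; a spectral comparison of $\tfrac{1}{1+\mu_{k}/n}$ with $\tfrac{1}{\mu_{k}}$ shows the $\|\cdot\|_{n}$ are equivalent to $\|\cdot\|_{H}$ for each fixed $n$, and $T_{n}$ is continuous on $V=L^{p}$ by $L^{p}$-contractivity of $\{P_{t}\}$, exactly as for the operators in Theorem \ref{CIth}. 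Using $T_{n}u=n(u-R_{n}u)$ and the identity above,
\[
2_{V^{*}}\langle A(u),T_{n}u\rangle_{V}=-2n\int_{\Lambda}\Psi(u)(u-R_{n}u)\,\d\xi\leq0,
\]
because $\int_{\Lambda}|u|^{p-2}u\,(u-R_{n}u)\,\d\xi\geq\|u\|_{L^{p}}^{p}-\|u\|_{L^{p}}^{p-1}\|R_{n}u\|_{L^{p}}\geq0$ by $L^{p}$-contractivity of $R_{n}$, while $a\int_{\Lambda}u(u-R_{n}u)\,\d\xi=a\langle u,(I-R_{n})u\rangle_{L^{2}}\geq0$ since $0\preceq R_{n}\preceq I$ on $L^{2}$. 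Combining this with $2\langle F(t,u),T_{n}u\rangle_{H}=2\phi(t)\|u\|_{n}^{2}\leq-2C_{1}\|u\|_{n}^{2}$ and $\|G\|_{L_{2}(U,H_{n})}^{2}\leq\|G\|_{L_{2}(U,L^{2}(\Lambda))}^{2}=:M_{0}$ (finite since $G\in L_{2}(U,L^{p})\subset L_{2}(U,L^{2})$, using $\|\cdot\|_{n}\uparrow\|\cdot\|_{S}$) yields
\[
2_{V^{*}}\langle A(u),T_{n}u\rangle_{V}+2\langle F(t,u),T_{n}u\rangle_{H}+\|G\|_{L_{2}(U,H_{n})}^{2}\leq-2C_{1}\|u\|_{n}^{2}+M_{0},
\]
i.e. (H6) with $c_{4}=2C_{1}$. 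This is precisely where $\phi<-C_{1}$ is indispensable, since the principal part now contributes only a nonpositive term.

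For the averaging statements (3)--(4) I would verify (G1)--(G2) and the compactness of $H(\mathbb F)$. Since $F(t,u)=\phi(t)u$ with $\phi$ almost periodic of uniform mean $\bar\phi$, the function $\omega_{1}(T):=\sup_{t\in\R}\big|\tfrac{1}{T}\int_{t}^{t+T}(\phi(s)-\bar\phi)\,\d s\big|$ lies in $\Psi$ and gives (G1), while $G$ constant yields $\bar G=G$ and (G2) with $\omega_{2}\equiv0$; almost periodicity of $\phi$ also makes $\mathbb F=(F,G)$ Bohr almost periodic, hence Birkhoff recurrent, so $H(\mathbb F)$ is compact. All hypotheses being in place, Theorem \ref{gasms} and Corollary \ref{averthcoro}(i) give (1)--(2), Corollary \ref{averthcoro}(iii) gives (3), and Theorem \ref{gath} gives (4) with $\bar{\mathcal A}=\{\L(\bar X(0))\}$. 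I expect the main obstacle to be (H6): getting the sign of $\int_{\Lambda}\Psi(u)(u-R_{n}u)\,\d\xi$ right and establishing the equivalence and monotone-approximation properties of the norms $\|\cdot\|_{n}$ in the $W_{0}^{-1,2}$-geometry; the second delicate point is the monotonicity in the $W_{0}^{-1,2}$-norm rather than in $L^{2}$, which is recovered through the embedding chain $L^{p}\subset L^{2}\subset W_{0}^{-1,2}$.
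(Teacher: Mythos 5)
Your proposal is correct and follows essentially the same route as the paper: the same Gelfand triple $L^{p}(\Lambda)\subset W_{0}^{-1,2}(\Lambda)\subset (L^{p})^{*}$, the same verification of (H1), (H2$'$), (H3)--(H6) (including the identical choice $S=L^{2}(\Lambda)$, $T_{n}=-\Delta(I-\Delta/n)^{-1}$ and the $L^{p}$-contractivity of the heat semigroup for (H6), with dissipativity supplied by $\phi<-C_{1}$), followed by citing Theorem \ref{gasms}, Corollary \ref{averthcoro} and Theorem \ref{gath}. The only deviations are harmless bookkeeping choices (you take $\lambda=0$, $\lambda'=c_{p}C^{-p}$, $\lambda_{F}\le-C_{1}$, whereas the paper takes $\lambda=a$, $\lambda'=2^{2-p}$, $\lambda_{F}=0$; both satisfy the hypotheses), and you spell out (G1)--(G2) and the compactness of $H(\mathbb F)$, which the paper leaves implicit.
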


\begin{proof}
Let $A(u):=\Delta\left(|u|^{p-2}u+au\right)$, $F(t,u):=\phi(t)u$ for
all $u\in V$ and $t\in\R$.
Fix $u\in V$, for all $v\in V$ we denote
\begin{equation*}
  ~_{V^{*}}\langle A(u),v\rangle_{V}
  :=-\int_{\Lambda}u(\xi)|u(\xi)|^{p-2}v(\xi)\d\xi
  -a\int_{\Lambda} u(\xi)v(\xi)\d\xi.
\end{equation*}
We first show that $A: V\rightarrow V^{*}$ is well-defined.
Indeed, employing H\"older's inequality and Young's inequality, we get
\begin{equation*}
\left|_{V^{*}}\langle A(u),v\rangle_{V}\right|
\leq \|u\|_{L^{p}}^{p-1}\|v\|_{L^{p}}
+a\left(C_{p}\|u\|_{L^{p}}^{p-1}
+C_{p}\left(|\Lambda|\right)^{\frac{p-1}{p}}\right)\|v\|_{L^{p}}
\end{equation*}
for all $u,v\in V$, where $C_{p}$ is a constant depending only on
$p$. Therefore, $A: V\rightarrow V^{*}$ is well-defined and
\begin{equation}\label{AP3ine1}
  \|A(u)\|_{V^{*}}\leq\left(1+aC_{p}\right)\|u\|_{L^{p}}^{p-1}
  +C_{p}\left(|\Lambda|\right)^{\frac{p-1}{p}}a.
\end{equation}

Similar to the proof of Theorem \ref{CIth},
it suffices to show that (H1), (H2$'$) and (H3)--(H6) hold.
Note that (H1), (H5) hold and $\lambda_{F}=0$,
$L_{F}=|\phi|_{\infty}$ in (H2$'$).

(H2$'$) For all $u$, $v\in V$, $t\in\R$ we have
\begin{align*}
~_{V^{*}}\langle A(u)-A(v),u-v\rangle_{V}
&
=-\langle u|u|^{p-2}-v|v|^{p-2},u-v\rangle_{L^{2}}
-a\|u-v\|_{L^{2}}^{2}\\
&
\leq-2^{2-p}\|u-v\|_{L^{2}}^{p}-a\|u-v\|_{L^{2}}^{2}\\
&
\leq-2^{2-p}\|u-v\|_{H}^{p}-a\|u-v\|_{H}^{2}.
\end{align*}
Therefore, (H2) holds with $r=p$, $\lambda'=2^{2-p}$
and $\lambda=a$.

(H3)  Note that for all $u\in V$, $t\in\R$
\begin{equation*}
  ~_{V^{*}}\langle A(u),u\rangle_{V}=-\int_{\Lambda}u(\xi)
  |u(\xi)|^{p-2}u(\xi)\d\xi-a\int_{\Lambda}
  u(\xi)u(\xi)\d\xi\leq-\|u\|_{V}^{p}.
\end{equation*}
That is, (H3) holds with $\alpha=p$.

(H4) holds by (\ref{AP3ine1}) with $\alpha=p$.

(H6) Let $S=L^{2}(\Lambda)$ and $\Delta$ be the Laplace operator
on $L^{2}(\Lambda)$ with the Dirichlet boundary condition.
Define $T_{n}=-\Delta\left(I-\frac{\Delta}{n}\right)^{-1}
=n\left(I-(I-\frac{\Delta}{n})^{-1}\right)$.
Then we obtain
\begin{align*}
&
~_{V^{*}}\langle\Delta(u|u|^{p-2}+au)+\phi(t)u,
-\Delta(I-\frac{\Delta}{n})^{-1}u\rangle_{V} \\
&
=-\langle u|u|^{p-2}+au,nu-n\int_{0}^{\infty}{\rm{e}}^{-t}
P_{\frac{t}{n}}u\d t\rangle_{L^{2}}+\phi(t)\|u\|_{n}^{2}\\
&
\leq -C_{1}\|u\|_{n}^{2}.
\end{align*}
That is, (H6) holds.
\end{proof}

\section*{Acknowledgements}
The authors sincerely thank the anonymous referees and the handling editor for their
very careful reading and valuable comments, which lead to significant
improvement of the paper. The authors are also grateful to Professor Michael R\"ockner for helpful
discussions and valuable suggestions during the revision of the paper. This work is partially
supported by NSFC Grants 11871132, 11925102, Dalian High-level Talent Innovation Project (Grant 2020RD09),
and Xinghai Jieqing fund from Dalian University of Technology.

\end{document}